\newtheorem{theorem}{Theorem}[section]
\newtheorem{lemma}[theorem]{Lemma}
\newtheorem{corollary}[theorem]{Corollary}
\theoremstyle{definition}
\newtheorem{definition}[theorem]{Definition}
\theoremstyle{remark}
\newtheorem{remark}[theorem]{Remark}
\numberwithin{equation}{section}
\begin{document}

\setcounter{page}{1}

\title[Weighted local Hardy spaces]{The Atomic Characterization of Weighted Local Hardy Spaces and Its Applications}

\author[X. Chen]{Xinyu Chen}
\address{Xinyu Chen, School of Science, Nanjing University of Posts and Telecommunications, Nanjing 210023, China}
\email{\textcolor[rgb]{0.00,0.00,0.84}{chenxinyu1130@126.com}}

\author[J. Tan]{Jian Tan*}

\address{Jian Tan(Corresponding author), School of Science, Nanjing University of Posts and Telecommunications, Nanjing 210023, China}
\email{\textcolor[rgb]{0.00,0.00,0.84}{tj@njupt.edu.cn; tanjian89@126.com}}


\let\thefootnote\relax\footnote{* Corresponding author.}

\subjclass[2010]{Primary 42B30; Secondary 42B20.}

\keywords{Weighted local Hardy space, atomic decomposition, discrete Calder\'on-type reproducing formula, boundedness}


\begin{abstract}
The purpose of this paper is to obtain atomic decomposition characterization of the weighted local Hardy space $h_{\omega}^{p}(\mathbb {R}^{n})$ with $\omega\in A_{\infty}(\mathbb {R}^{n})$. We apply the discrete version of Calder\'on's identity and the weighted Littlewood--Paley--Stein theory to prove that $h_{\omega}^{p}(\mathbb {R}^{n})$ coincides with the weighted$\text{-}(p,q,s)$ atomic local Hardy space $h_{\omega,atom}^{p,q,s}(\mathbb {R}^{n})$ for $0<p<\infty$. The atomic decomposition theorems in our paper improve the previous atomic decomposition results of local weighted Hardy spaces in the literature. As applications, we derive the boundedness of inhomogeneous Calder\'on--Zygmund singular integrals and local fractional integrals on weighted local Hardy spaces.
\end{abstract} 
\maketitle

\section{Introduction}
The real-variable theory of global Hardy spaces on $\mathbb {R}^{n}$ was essentially developed by Stein and Weiss \cite{1960On} and systematically studied by Fefferman and Stein \cite{fefferman1972}. Hardy spaces $H^{p}(\mathbb {R}^{n})$ serve as a substitute for $L^{p}(\mathbb {R}^{n})$ when $p\leq1$. However, the principle of $H^{p}(\mathbb {R}^{n})$ breaks down at some key points, for example, pseudo-differential operators are not bounded on $H^{p}$. Hence, Goldberg in \cite{goldberg1979local} introduced the class of local Hardy spaces $h^{p}(\mathbb {R}^{n})$ with $p\in (0,1]$. Moreover, Goldberg \cite{goldberg1979local} established the maximal function characterization of $h^{p}(\mathbb {R}^{n})$ for $p\in ((n-1)/n,1]$. From then on, local Hardy spaces have become an indispensable part in terms of harmonic analysis and partial differential equations.
Then Peloso and Secco \cite{peloso2008local} obtained local Riesz transforms of local Hardy spaces and extended some characterizations of Hardy spaces $H^{p}(\mathbb {R}^{n})$ to the local Hardy spaces $h^{p}(\mathbb {R}^{n})$ for $0<p\leq1$. In 1983, Triebel \cite{1983Theory} first established the Littlewood--Paley characterization of $h^{p}(\mathbb {R}^{n})$ which is a tool to prove that $h^{p}(\mathbb {R}^{n})$ coincides with the Triebel-Lizorkin space $F_{p,2}^{0}(\mathbb {R}^{n})$. In 1981, the weighted version $h^{p}_{\omega}(\mathbb {R}^{n})$ of $h^{p}(\mathbb {R}^{n})$ with $\omega\in A_{\infty}(\mathbb {R}^{n})$ was developed by Bui \cite{qui1981weighted}. Later, Rychkov \cite{rychkov2001littlewood} extended a part of the theory of weighted local Hardy spaces to $A_{\infty}^{loc}(\mathbb {R}^{n})$ weights and obtained the Littlewood-Paley function characterization of $h_{\omega}^{p}(\mathbb {R}^{n})$. In 2012, Tang \cite{tang2012weighted} established the weighted atomic characterization of $h_{\omega}^{p}(\mathbb {R}^{n})$ with $\omega\in A_{\infty}^{loc}(\mathbb {R}^{n})$ via the local grand maximal function. It is worth pointing out that in recent years, various Hardy-type spaces were introduced and studied in \cite{nakai2012hardy,sawano2017hardy,tan2019atomic,yang2012local}.\par
As is well-known, the atomic decomposition plays an important role in the study of the boundedness of operators on Hardy-type spaces and many theories of it have been established. In 1974, Coifman \cite{coifman1974real} first introduced an atomic decomposition characterization of Hardy spaces on $\mathbb {R}$. Later, the extension to higher dimensions was obtained by Latter \cite{1978A}. In fact, the marked difference between the atomic characterization of $H^{p}(\mathbb {R}^{n})$ and $h^{p}(\mathbb {R}^{n})$ is the cancellation property of atoms. To be precise, the vanishing moment is needed only for the atoms with small supports in $h^{p}(\mathbb {R}^{n})$ while the vanishing moment is needed for all atoms in $H^{p}(\mathbb {R}^{n})$. In \cite{ding2012boundedness}, Y. Ding et al. established the atomic decomposition characterization of the weighted Hardy spaces $H^{p}_{\omega}(\mathbb {R}^{n})$ for $p\in(0,1]$ and obtained the $(H^{p}_{\omega}(\mathbb R^{n}),L^{p}_{\omega}(\mathbb R^{n}))\text{-}$boundedness for singular integrals via the discrete Calder\'on's identity and the weighted Littlewood-Paley-Stein theory. In \cite{ding2019discrete}, W. Ding et al. obtained the $L^{2}$ atomic decomposition of local Hardy spaces $h^{p}(\mathbb {R}^{n})$ for $0<p\leq1$.  Motivated by these results, we give the atomic decomposition characterization of the weighted local Hardy spaces $h_{\omega}^{p}(\mathbb {R}^{n})$ and a proof of the convergence of the atomic decomposition in both $h_{\omega}^{p}(\mathbb {R}^{n})$ and $L^{q}(\mathbb {R}^{n})$ norms for any $f\in h_{\omega}^{p}(\mathbb {R}^{n})\cap L^{q}(\mathbb {R}^{n})$. The atomic decomposition characterization in our paper provides extensions of the results in \cite{Li2021} by $w\textbf{-}(p,q,s)\textbf{-}$atom and $w\textbf{-}(p,q,s)\textbf{-}$block. In fact, we merely assume that $A_{\infty}(\mathbb {R}^{n})$ and $0<p<\infty$. Moreover, the results have a wide applicability to more general settings in that we avoid the maximal function characterization and the Calder\'on--Zymund decomposition.\par
The class of weighted local Hardy spaces $h_{\omega}^{p}(\mathbb R^{n})$ can be defined by the finiteness of the quasi-norm \cite{rychkov2001littlewood}. To be precise, let $\Phi\in\mathcal{S}(\mathbb R^{n})$ with $\int\Phi\neq0$ and $\Phi_{t}(x)=t^{-n}\Phi(\frac{x}{t})$, then
    \[
    M_{\Phi}(f)(x)=\sup\limits_{0<t<1}\vert\Phi_{t}\ast f(x)\vert. 
    \]
    Then the weighted local Hardy space $h_{\omega}^{p}(\mathbb R^{n})$ for $0<p<\infty$ and $\omega\in A_{\infty}$ is defined by
    \[
    h_{\omega}^{p}=\{f\in\mathcal{S}^{\prime}(\mathbb R^{n})\colon M_{\Phi}(f)\in L^{p}_{\omega}(\mathbb R^{n})\}
    \]
where 
\[
\left \|f\right \|_{h_{\omega}^{p}(\mathbb{R}^{n})}=\left \| M_{\Phi}(f) \right \|_{L_{\omega}^{p}(\mathbb {R}^{n})}.
\]\par
In fact, we can also define the weighted local Hardy space via the discrete Littlewood--Paley--Stein theory. Thus, we firstly recall some definitions as follows. For more details, see \cite{han1994littlewood}.
\begin{definition}
    Let $\phi_{0},\ \phi\in \mathcal{S}(\mathbb{R}^{n})$ with
\begin{equation}
    {\rm supp}\widehat\phi_{0}\subseteq\{\xi\in\mathbb{R}^{n}\colon\vert\xi\vert\leq2\};\  \widehat\phi_{0}\{\xi\}=1,\ {\rm if}\ \vert\xi\vert\leq1,\
\end{equation}
and
\begin{equation}
    {\rm supp}\widehat\phi\subseteq\{\xi\in\mathbb{R}^{n}\colon\frac{1}{2}\leq\vert\xi\vert\leq2\},
\end{equation}
and for all $\xi\in\mathbb {R}^{n}$
\begin{equation}\label{eq1.3}
    \vert\widehat{\phi_0}(\xi)\vert^{2}+ \sum\limits_{j=1}^{\infty}\vert\widehat{\phi}(2^{-j}\xi)\vert^{2}=1.
\end{equation}
\end{definition}
\noindent
Additionally, define $\phi_{j}(x)=2^{jn}\phi(2^{j}x)$ for $j\in\mathbb {N}$ and $j\geq1$. For any $j\in\mathbb {Z}$, denote $\Pi_{j}$=\{Q$\colon$Q are dyadic cubes in $\mathbb {R}^{n}$ with $l(Q)=2^{-j}$ and the left lower corners of Q are $x_{Q}=2^{-j}l$, $l\in\mathbb {Z}^{n}$\}.
By applying Fourier transform and equation ({\upshape\ref{eq1.3}}), we can obtain the continuous Calder\'on's identity \cite{ding2019discrete}:
\begin{equation}
    f(x)=\sum\limits_{j=0}^{\infty}\phi_{j}\ast\phi_{j}\ast f(x)
\end{equation}
where the series converges in $L^{q}(\mathbb {R}^{n})$, $\mathcal{S}({\mathbb {R}^{n}})$ and $\mathcal{S}^{\prime}({\mathbb {R}^{n}})$. Furthermore, we can discretize the above identity:
\[
f(x)=\sum\limits_{j=0}^{\infty}\sum\limits_{Q\in\Pi_{j}}\vert Q\vert(\phi_{j}\ast f)(x_{Q})\phi_{j}(x-x_{Q}).
\]
where the series converges in $L^{q}(\mathbb {R}^{n})$, $\mathcal{S}({\mathbb {R}^{n}})$ and $\mathcal{S}^{\prime}({\mathbb {R}^{n}})$.\par
Suppose that $\phi_{0}$, $\phi\in\mathcal{S}(\mathbb R^{n})$ satisfies (1.1)-(1.3). Based on the above reproducing formula, we give the definition of inhomogeneous Littlewood-Paley-Stein square function
\[
g(f)(x)=\left\{\sum\limits_{i\in\mathbb N}\vert\phi_{i}\ast f(x)\vert^{2}\right\}^{\frac{1}{2}}
\]
and the definition of the discrete Littlewood-Paley-Stein square function
\[
g_{d}(f)(x)=\left\{\sum\limits_{j\in\mathbb N}\sum\limits_{Q\in\Pi_{j}}\vert\phi_{j}\ast f(x_{Q})\vert^2\chi_{Q}(x)\right\}^{\frac{1}{2}}.
\]\par
Now we can give the definition of the weighted local Hardy space.
\begin{definition}
    Let $0<p<\infty,\ \omega\in A_{\infty}(\mathbb R^{n})$. Then the weighted local Hardy space $h_{\omega}^{p}(\mathbb{R}^{n})$ is defined by
\[
h_{\omega}^{p}(\mathbb{R}^{n})=\{f\in \mathcal{S}^{\prime}(\mathbb{R}^{n})\colon\left \|f\right \|_{h_{\omega}^{p}(\mathbb{R}^{n})}<\infty\},
\]
where
\[
\left \|f\right \|_{h_{\omega}^{p}(\mathbb{R}^{n})}=\left \| g_{d}(f) \right \|_{L_{\omega}^{p}(\mathbb {R}^{n})}.
\]
\end{definition}
The definitions of the atom $a$ and the block $b$ are as follows. Details are referred to \cite{tan2019atomic}.
\begin{definition}
    Let $0<p<\infty,\ 1\leq q<\infty,\ \omega\in A_{q}(\mathbb {R}^{n})$ with critical index $q_{\omega}$ and $s\in\mathbb{Z}$ fulfilling $s\geq {\rm max}\{[n(\frac {q_\omega}{p}-1)],-1\}.$ Fix a constant $C\geq1$. Then define a $\omega\text{-}(p,q,s)\text{-}$atom of $h_{\omega}^p(\mathbb R^{n})$ to be a function $a$ which is supported in a cube $Q\subseteq\mathbb R^{n}$ with $\vert Q\vert\leq C$ and satisfies
    \[
    \left \|a\right \|_{L^{q}}\leq\vert Q\vert^{\frac{1}{q}}\omega(Q)^{-\frac{1}{p}}\quad {\rm and}\quad \int_{Q}{a(x)x^{\alpha}}dx=0,\ {\rm for\ all}\ \vert\alpha\vert\leq s.
    \]
\end{definition}
\begin{definition}
    Let $0<p<\infty,\ 1\leq q<\infty,\ \omega\in A_{q}(\mathbb {R}^{n})$ with critical index $q_{\omega}$ and $s\in\mathbb{Z}$ fulfilling $s\geq {\rm max}\{[n(\frac {q_\omega}{p}-1)],-1\}.$ Fix a constant $C\geq1$. Then define a $\omega\text{-}(p,q,s)\text{-}$block of $h_{\omega}^p(\mathbb R^{n})$ to be a function $b$ which is supported in a cube $P\subseteq\mathbb R^{n}$ with $\vert P\vert>C$ and satisfies $\left \|b\right \|_{L^{q}}\leq\vert P\vert^{\frac{1}{q}}\omega(P)^{-\frac{1}{p}}$.
\end{definition}
Naturally, we can give the definition of the weighted$\text{-}(p,q,s)$ atomic local Hardy space $h_{\omega,atom}^{p,q,s}(\mathbb {R}^{n})$.
\begin{definition}
    Let $0<p<\infty,\ q_{\omega}<q<\infty,\ \omega\in A_{\infty}(\mathbb R^{n})$ with critical index $q_{\omega}$ and $s\in\mathbb{Z}$ fulfilling $s\geq {\rm max}\{[n(\frac {q_\omega}{p}-1)],-1\}.$ Then the weighted$\text{-}(p,q,s)$ atomic local Hardy space $h_{\omega,atom}^{p,q,s}(\mathbb {R}^{n})$ is defined by
\[
h_{\omega,atom}^{p,q,s}(\mathbb {R}^{n})=\left\{f\in \mathcal{S}^{\prime}(\mathbb {R}^{n})\colon f=\sum\limits_{j}\lambda_{j}a_{j}+\sum\limits_{j}\mu_{j}b_{j}\right\},
\]
where each $a_{j}$ is a $\omega\text{-}(p,q,s)\text{-}$atom and each $b_{j}$ is a $\omega\text{-}(p,q,s)\text{-}$block sastifying
\[
\left \|\sum\limits_{j=1}^{\infty}\frac{\lambda_j \chi_{Q_j}}{\omega(Q_j)^{\frac{1}{p}}}\right \|_{L_{\omega}^{p}}+\left \|\sum\limits_{j=1}^{\infty}\frac{\mu_j \chi_{P_j}}{\omega(P_j)^{\frac{1}{p}}}\right \|_{L_{\omega}^{p}}<\infty.
\]
Furthermore, we have
\[
\left \|f\right \|_{h_{\omega,atom}^{p,q,s}}=\inf\left\{\left \|\sum\limits_{j=1}^{\infty}\frac{\lambda_j \chi_{Q_j}}{\omega(Q_j)^{\frac{1}{p}}}\right \|_{L_{\omega}^{p}}+\left \|\sum\limits_{j=1}^{\infty}\frac{\mu_j \chi_{P_j}}{\omega(P_j)^{\frac{1}{p}}}\right \|_{L_{\omega}^{p}}\right\}
\]
where the infimum is taken over all decompositions $f=\sum\limits_{j}\lambda_{j}a_{j}+\sum\limits_{j}\mu_{j}b_{j}.$
\end{definition}\par
If $\omega\in A_{\infty}$, there exists $r>1$ such that $\omega\in RH_{r}$. Fix a constant $q_{r}$ such that $q_{r}>\max\{p,1\}$ and $({\frac{q_{r}}{p}})^{\prime}\leq r$.
\begin{theorem}
    If $0<p<\infty$ and $\omega\in A_{\infty}(\mathbb R^{n})$, then for any $\max\{q_{\omega},q_{r}\}<q<\infty$ and any $s\in\mathbb{Z}$ fulfilling $s\geq {\rm max}\{[n(\frac {q_\omega}{p}-1)],-1\}$, 
    \[
    h_{\omega}^{p}(\mathbb {R}^{n})=h_{\omega,atom}^{p,q,s}(\mathbb {R}^{n})
    \]
    with the equivalent norms.
\end{theorem}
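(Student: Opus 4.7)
The plan is to prove the two inclusions $h_{\omega,atom}^{p,q,s}\subseteq h_{\omega}^{p}$ and $h_{\omega}^{p}\subseteq h_{\omega,atom}^{p,q,s}$ with matching norm bounds. For the first inclusion, given a decomposition $f=\sum_j\lambda_j a_j+\sum_j\mu_j b_j$, the subadditivity of $g_d$ (Minkowski in $\ell^2$) reduces matters to uniform control of $\|g_d(\cdot)\|_{L_\omega^p}$ on a single atom or block. For a $\omega$-$(p,q,s)$-atom $a$ supported in $Q$ with $|Q|\le C$, split $\|g_d(a)\|_{L_\omega^p}^p$ into the contribution from $2\sqrt n\, Q$ and from its complement. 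On $2\sqrt n\, Q$, I would apply H\"older's inequality with exponent $q/p$ (allowed since $q>q_r$), the weighted Littlewood--Paley bound $\|g_d(a)\|_{L^q}\lesssim\|a\|_{L^q}$, and the $A_q$-condition to pass from $\omega(2\sqrt n\, Q)$ to $\omega(Q)$. On the complement, I would exploit the vanishing moments of $a$ up to order $s$ via Taylor expansion of $\phi_j(\cdot-y)$ around $y=x_Q$, obtaining decay strong enough to sum over $j\ge 1$; the hypothesis $s\ge[n(q_\omega/p-1)]$ is exactly what guarantees convergence. The block case is simpler since $|P|>C$ together with the decay of $\phi_j$ for $j\ge 1$ and $L^q$-boundedness of $\phi_0\ast$ suffices. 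Summing via the $\ell^p$ quasi-triangle inequality and the weighted Fefferman--Stein vector-valued maximal inequality closes this direction.

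For the reverse inclusion, I would start from the discrete inhomogeneous Calder\'on reproducing formula
\[
f = \sum_{Q\in\Pi_0}|Q|(\phi_0\ast f)(x_Q)\phi_0(\cdot-x_Q) + \sum_{j=1}^\infty\sum_{Q\in\Pi_j}|Q|(\phi_j\ast f)(x_Q)\phi_j(\cdot-x_Q).
\]
Define the level sets $\Omega_k=\{g_d(f)>2^k\}$ for $k\in\mathbb Z$ and the expansions $\widetilde\Omega_k=\{M\chi_{\Omega_k}>1/2\}$, and let $\{Q_k^i\}_i$ be the maximal dyadic cubes in $\widetilde\Omega_k$. Associate to each $Q\in\Pi_j$ the unique $k(Q)$ satisfying $|Q\cap\Omega_{k}|>|Q|/2\ge|Q\cap\Omega_{k+1}|$ and the unique Whitney cube $Q_{k(Q)}^{i(Q)}$ containing $Q$. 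Regrouping yields
\[
f = \sum_{k,i}\lambda_{k,i}a_{k,i} + \sum_{k,i}\mu_{k,i}b_{k,i},
\]
where $a_{k,i}$ collects the $j\ge 1$ pieces assigned to $(k,i)$ (these become atoms: $\phi_j$ has all vanishing moments since $\widehat\phi$ vanishes near the origin, so $a_{k,i}$ satisfies the required moment conditions for every $s$), while $b_{k,i}$ collects the $j=0$ pieces (these become blocks at the coarse scale where $|Q_k^i|>C$). Each piece is supported in a fixed dilate of $Q_k^i$.

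The crucial ingredient is the $L^q$-normalization: I would choose $\lambda_{k,i}=2^k\omega(Q_k^i)^{1/p}$ and prove $\|a_{k,i}\|_{L^q}\lesssim|Q_k^i|^{1/q}$ (and the analogue for $b_{k,i}$) by duality. Pairing $a_{k,i}$ against a unit test function $g\in L^{q'}$, expanding $g$ through the dual Calder\'on formula, and applying Cauchy--Schwarz with the almost-orthogonality of $\{\phi_j\}$ reduces the estimate to an $L^{q'}$-integral of $g_d(f)\chi_{\widetilde\Omega_k\setminus\widetilde\Omega_{k+1}}$, which is pointwise bounded by $2^{k+1}$. The norm identity
\[
\Bigl\|\sum_{k,i}\frac{\lambda_{k,i}\chi_{Q_k^i}}{\omega(Q_k^i)^{1/p}}\Bigr\|_{L_\omega^p}^p \sim \sum_k 2^{kp}\omega(\widetilde\Omega_k)\sim\|g_d(f)\|_{L_\omega^p}^p
\]
uses the strong doubling of $A_\infty$ weights ($\omega(\widetilde\Omega_k)\sim\omega(\Omega_k)$). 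Convergence of the series in both $h_\omega^p$ and in $L^q$ (whenever $f\in h_\omega^p\cap L^q$) would then follow by truncating the $k$-sum and controlling tails by the same level-set identity. The main obstacle is the duality/almost-orthogonality argument that produces the weighted $L^q$-bound on each atom and block with the precise factor $\omega(Q_k^i)^{-1/p}$: it couples the Littlewood--Paley machinery with the $A_\infty\cap RH_r$ structure of $\omega$ (where the assumptions $q>q_\omega$ and $q>q_r$ enter crucially) and with the Whitney geometry inside $\widetilde\Omega_k$, and it is also where the separation between atoms (small cubes, moment condition enforced) and blocks (large cubes, no moment condition) distinguishes the local from the global case.
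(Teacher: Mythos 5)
Your first inclusion (reconstruction: $h_{\omega,atom}^{p,q,s}\subseteq h_{\omega}^{p}$) follows essentially the paper's proof of Theorem 1.8: near part handled by H\"older/reverse-H\"older (the paper packages this as Lemma 2.3, which is where $q>q_r$ enters), far part by Taylor expansion against the vanishing moments and the Fefferman--Stein inequality with $\gamma=\frac{n+s+1}{n}$, and the block case by the trivial size estimate using $|P|>C$. That direction is sound.

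The decomposition direction has a genuine gap. You build your atoms and blocks by regrouping the terms $|Q|(\phi_j\ast f)(x_Q)\phi_j(\cdot-x_Q)$ from the discrete Calder\'on identity based on the band-limited family of Definition 1.1, and you assert that ``each piece is supported in a fixed dilate of $Q_k^i$.'' This is false: since $\widehat{\phi}$ has compact support, $\phi$ cannot have compact support in space, so $\phi_j(\cdot-x_Q)$ is a globally supported Schwartz function and the resulting $a_{k,i}$ is at best a molecule, not an atom in the sense of Definition 1.3 (which requires support in a cube of bounded volume). You cannot have both infinitely many vanishing moments (which you invoke from $\widehat{\phi}$ vanishing near the origin) and compact spatial support. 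This is precisely the obstruction the paper's Lemma 2.5 is designed to remove: it replaces $\phi_0,\phi$ by compactly supported $\psi_0,\psi$ (conditions (2.1)--(2.3), with only finitely many vanishing moments $M$), writes $I=T_N+R_N$ via Coifman's decomposition, shows $\|R_N\|\le C2^{-N}$ by almost-orthogonality so that $T_N$ is invertible by Neumann series, and then decomposes $f=T_N(h)$ with $h=T_N^{-1}f$ satisfying $\|h\|_{h^p_\omega}\sim\|f\|_{h^p_\omega}$ and $\|h\|_{L^q}\sim\|f\|_{L^q}$. Only then are the regrouped pieces genuinely supported in $c_1\widetilde Q$ with the finitely many moment conditions needed for a $\omega$-$(p,q,s)$-atom. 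To repair your argument you would either have to import this perturbed reproducing formula (as the paper does) or prove a molecular characterization of $h^p_\omega$, neither of which appears in your sketch. The remaining ingredients of your stopping-time construction (level sets of the square function, the coefficient choice, the duality argument for the $L^q$ size estimate, and the identification of the $j=0$ scale with blocks) do match the paper's proof of Theorem 1.7 once the correct reproducing formula is in place.
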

In fact, Theorem 1.6 can be split into two parts as follows.
\begin{theorem}\label{th1.7}
    Let $0<p<\infty,\ \omega\in A_{\infty}(\mathbb R^{n}),\ q_\omega=\inf\{q\colon\omega\in A_{q}\},\ q_\omega<q<\infty$ and $s\in\mathbb{Z}$ fulfilling $s\geq {\rm max}\{[n(\frac {q_\omega}{p}-1)],-1\}.$ If $f\in h_\omega^{p}(\mathbb R^{n})\cap L^{q}(\mathbb R^{n})$, there exist a sequence of $\ \omega\text{-}(p,q,s)\text{-}$atoms $\{a_{j}\}_{j=1}^{\infty}$ with a corresponding sequence of non-negative numbers $\{\lambda_{j}\}_{j=1}^{\infty}$ and a sequence of $\omega\text{-}(p,q,s)\text{-}blocks$ $\{b_{j}\}_{j=1}^{\infty}$ with a corresponding sequence of non-negative numbers $\{\mu_{j}\}_{j=1}^{\infty}$ such that $f=\sum\limits_{j}\lambda_{j}a_{j}+\sum\limits_{j}\mu_{j}b_{j}$ and
\[
\left \|\left(\sum\limits_{j=1}^{\infty}\left(\frac{\lambda_j \chi_{Q_j}}{\omega(Q_j)^{\frac{1}{p}}}\right)^{\eta}\right)^{1/\eta}\right \|_{L_{\omega}^{p}}+\left \|\left(\sum\limits_{j=1}^{\infty}\left(\frac{\mu_j \chi_{P_j}}{\omega(P_j)^{\frac{1}{p}}}\right)^{\eta}\right)^{1/\eta}\right \|_{L_{\omega}^{p}}\leq C_{\eta}\left \|f\right \|_{h_{\omega}^{p}}
\]
for any $0<\eta<\infty$. Furthermore, the series converges to $f$ in both $h_{\omega}^{p}(\mathbb R^{n})$ and $L^{q}(\mathbb R^{n})$ norms.
\end{theorem}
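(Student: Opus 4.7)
The plan is to combine the discrete Calder\'on reproducing formula with a Whitney-style decomposition of the level sets of the discrete Littlewood--Paley--Stein function $g_{d}(f)$. Since $f\in h_{\omega}^{p}\cap L^{q}$, the identity
\[
f(x)=\sum_{j=0}^{\infty}\sum_{Q\in\Pi_{j}}|Q|(\phi_{j}\ast f)(x_{Q})\,\phi_{j}(x-x_{Q})
\]
holds with $L^{q}$ convergence, so any regrouping of the indexed family of wave packets below converges in the same sense. The strategy is to sort these packets into disjoint bunches indexed by pairs $(k,i)$ produced by a Whitney-type decomposition of the level sets of $g_{d}(f)$; small outer cubes $Q^{k,i}$ will yield $\omega\text{-}(p,q,s)$-atoms while large outer cubes will yield $\omega\text{-}(p,q,s)$-blocks. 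This two-regime split is exactly what separates the inhomogeneous/local theory from the global one.

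For each $k\in\mathbb{Z}$ I would set $\Omega_{k}=\{x:g_{d}(f)(x)>2^{k}\}$ and $\widetilde{\Omega}_{k}=\{x:M(\chi_{\Omega_{k}})(x)>1/2\}$, where $M$ is the Hardy--Littlewood maximal function, and decompose $\widetilde{\Omega}_{k}=\bigcup_{i}Q^{k,i}$ into maximal dyadic Whitney cubes. Each dyadic $Q\in\Pi_{j}$ appearing in the reproducing formula is attached to the unique $k$ such that $|Q\cap\Omega_{k}|>|Q|/2\geq|Q\cap\Omega_{k+1}|$ and then to the unique $Q^{k,i}$ that contains it, producing an index set $\mathcal{B}_{k,i}$. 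With
\[
g_{k,i}(x)=\sum_{Q\in\mathcal{B}_{k,i}}|Q|(\phi_{j}\ast f)(x_{Q})\,\phi_{j}(x-x_{Q}),
\]
one has $f=\sum_{k,i}g_{k,i}$. The key dichotomy is $|Q^{k,i}|\leq 1$ versus $|Q^{k,i}|>1$: in the first case $Q\subseteq Q^{k,i}$ forces $j\geq 1$ throughout $\mathcal{B}_{k,i}$, and since $\widehat{\phi}$ vanishes in a neighbourhood of the origin, each $\phi_{j}$ with $j\geq 1$ has vanishing moments of every order, so $g_{k,i}$ inherits cancellation up to order $s$ and is a candidate atom; in the second case this cancellation is unavailable, but a block does not require it.

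Quantitatively, I would prove $\|g_{k,i}\|_{L^{q}}\lesssim 2^{k}|Q^{k,i}|^{1/q}$ via almost-orthogonality of the wave-packet family combined with the bound $|(\phi_{j}\ast f)(x_{Q})|\lesssim 2^{k}$ forced on the portion of $Q$ outside $\Omega_{k+1}$. Setting $\lambda_{k,i}=C\cdot 2^{k}\omega(Q^{k,i})^{1/p}$ and $a_{k,i}=g_{k,i}/\lambda_{k,i}$ then matches the atomic/block $L^{q}$-normalisation. For the coefficient estimate I would establish the pointwise bound
\[
\sum_{k,i}\Bigl(\frac{\lambda_{k,i}}{\omega(Q^{k,i})^{1/p}}\Bigr)^{\eta}\chi_{Q^{k,i}}(x)\lesssim g_{d}(f)(x)^{\eta}
\]
by summing a geometric series in $k$, using that the $Q^{k,i}$ tile $\widetilde{\Omega}_{k}$ and that $x\in\widetilde{\Omega}_{k}$ forces $g_{d}(f)(x)\gtrsim 2^{k}$ off an exceptional set controlled via the $A_{\infty}$ property of $\omega$. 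Integrating against $\omega\,dx$ then produces the required $C_{\eta}\|f\|_{h_{\omega}^{p}}$ bound.

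The step I expect to be the main obstacle is reconciling the non-compact support of the Schwartz packets $\phi_{j}(\cdot-x_{Q})$ with the strict support built into the atomic/block definition. My plan is to perform an annular decomposition $g_{k,i}=\sum_{\ell\geq 0}g_{k,i}^{\ell}$ in which $g_{k,i}^{\ell}$ gathers the contributions localised to the shell of side $\sim 2^{\ell}\ell(Q^{k,i})$, and to use the rapid decay of $\phi_{j}$ to make $\|g_{k,i}^{\ell}\|_{L^{q}}$ geometrically small in $\ell$; after a polynomial correction that restores the vanishing moments destroyed by truncation (the corrections are designed to cascade into the next shell and telescope), each $g_{k,i}^{\ell}$ can be recast as either an atom or a block on a cube comparable to $2^{\ell}Q^{k,i}$, with coefficients that still sum admissibly. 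Convergence in $L^{q}$ of the resulting series is then automatic from the $L^{q}$ convergence of the Calder\'on formula, while convergence in $h_{\omega}^{p}$ follows by re-running the same level-set argument on the partial-sum remainder and letting the truncation parameter tend to infinity.
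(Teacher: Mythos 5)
Your overall architecture --- stopping time on the level sets of the square function, maximal dyadic cubes, grouping wave packets by stopping cube, and the small-cube/large-cube dichotomy deciding atom versus block --- is the same as the paper's. The genuine gap is at the step you yourself flag as the main obstacle: the wave packets $\phi_{j}(\cdot-x_{Q})$ in the discrete Calder\'on identity are band-limited Schwartz functions with unbounded support, so the grouped sums $g_{k,i}$ are not supported in dilates of $Q^{k,i}$, and your proposed repair (annular decomposition into shells $2^{\ell}Q^{k,i}$ with cascading polynomial corrections to restore the moments destroyed by truncation) is left entirely as a plan. This is not a routine detail: you must (i) verify that each corrected shell piece still satisfies the $L^{q}$ normalisation with a geometric gain in $\ell$ after the polynomial is subtracted, (ii) check that shells with $|2^{\ell}Q^{k,i}|>C$ can be reclassified as blocks without breaking the telescoping of the corrections, and (iii) re-sum the coefficient functional over $\ell$, which requires the decay in $\ell$ to beat the growth of $\chi_{2^{\ell}Q^{k,i}}$ under the vector-valued maximal inequality. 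The paper avoids all of this by first proving a new reproducing formula (Lemma~\ref{le2.5}): it writes $I=T_{N}+R_{N}$ with compactly supported $\psi_{0},\psi$ having prescribed vanishing moments, shows $\|R_{N}\|\leq C2^{-N}$ by almost orthogonality, inverts $T_{N}$ by a Neumann series, and decomposes $f=T_{N}(h)$ with $h=T_{N}^{-1}f$. Because $\psi_{j}(\cdot-u_{Q})$ has support of diameter $\lesssim 2^{-j}$, the grouped sums are honestly supported in $c_{1}\widetilde{Q}$ and inherit the vanishing moments of $\psi$ for free; the entire molecular correction machinery is unnecessary. Until you actually carry out your shell-plus-correction argument, your proof is incomplete precisely where the paper's Lemma~\ref{le2.5} does the work.

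A secondary, fixable inaccuracy: the pointwise bound
\[
\sum_{k,i}\Bigl(\lambda_{k,i}\,\omega(Q^{k,i})^{-1/p}\Bigr)^{\eta}\chi_{Q^{k,i}}(x)\lesssim g_{d}(f)(x)^{\eta}
\]
cannot hold pointwise as stated, because a point $x\in\widetilde{\Omega}_{k}\setminus\Omega_{k}$ contributes $2^{k\eta}$ to the left side while $g_{d}(f)(x)$ may be far below $2^{k}$ there. The standard repair, and the one the paper uses, is to dominate $\chi_{\widetilde{\Omega}_{k}}\leq C\,M(\chi_{\Omega_{k}})^{\gamma}$ with $\gamma p>q_{\omega}$ and apply the weighted Fefferman--Stein vector-valued maximal inequality (Lemma~\ref{le2.1}) before summing the geometric series over the sets $\Omega_{k}\setminus\Omega_{k+1}$; the estimate then goes through in $L^{p}_{\omega}$ norm rather than pointwise.
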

\begin{theorem}\label{th1.8}
    Given $0<p<\infty,\ \omega\in A_{\infty}(\mathbb R^{n}),\ q_\omega=\inf\{q\colon\omega\in A_{q}\},\ q_{r}<q<\infty$ and $s\in\mathbb{Z}$ fulfilling $s\geq {\rm max}\{[n(\frac {q_\omega}{p}-1)],-1\}.$ Suppose that $\{a_{j}\}_{j=1}^{\infty}$ is a sequence of $\omega\text{-}(p,q,s)\text{-}$atoms with a corresponding sequence of non-negative numbers $\{\lambda_{j}\}_{j=1}^{\infty}$ and $\{b_{j}\}_{j=1}^{\infty}$ is a sequence of $\omega\text{-}(p,q,s)\text{-}$blocks with a corresponding sequence of non-negative numbers $\{\mu_{j}\}_{j=1}^{\infty}$ satisfying
\[
\left \|\sum\limits_{j=1}^{\infty}\frac{\lambda_j \chi_{Q_j}}{\omega(Q_j)^{\frac{1}{p}}}\right \|_{L_{\omega}^{p}}+\left \|\sum\limits_{j=1}^{\infty}\frac{\mu_j \chi_{P_j}}{\omega(P_j)^{\frac{1}{p}}}\right \|_{L_{\omega}^{p}}<\infty.
\]
Then the series $f=\sum\limits_{j}\lambda_{j}a_{j}+\sum\limits_{j}\mu_{j}b_{j}$ converges in $h_{\omega}^{p}(\mathbb{R}^{n})$ and satisfies 
\[
\left \|\sum\limits_{j}\lambda_{j}a_{j}\right \|_{h_{\omega}^{p}}\leq C\left \|\sum\limits_{j=1}^{\infty}\frac{\lambda_j \chi_{Q_j}}{\omega(Q_j)^{\frac{1}{p}}}\right \|_{L_{\omega}^{p}}
\]
and
\[
\left \|\sum\limits_{j}\mu_{j}b_{j}\right \|_{h_{\omega}^{p}}\leq C\left \|\sum\limits_{j=1}^{\infty}\frac{\mu_j \chi_{P_j}}{\omega(P_j)^{\frac{1}{p}}}\right \|_{L_{\omega}^{p}}.
\]
\end{theorem}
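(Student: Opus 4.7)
The plan is to use the square function characterization $\|f\|_{h_\omega^p} = \|g_d(f)\|_{L_\omega^p}$. Since $g_d$ is defined via an $\ell^2$ norm, Minkowski's inequality yields the pointwise bound
\[
g_d\Bigl(\sum_k c_k f_k\Bigr)(x) \leq \sum_k c_k\, g_d(f_k)(x)
\]
for non-negative scalars, which decouples the atomic sum from the block sum and reduces the theorem to two parallel estimates. Once these are proved for finite partial sums, the announced norm bounds applied to tails force the series to be Cauchy in $h_\omega^p$, giving both convergence and the final inequality.

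For the atomic piece, I would establish, for each $\omega$-$(p,q,s)$-atom $a_j$ supported on $Q_j$, a pointwise estimate of the form
\[
g_d(a_j)(x) \leq \frac{C}{\omega(Q_j)^{1/p}}\bigl[M(\chi_{Q_j})(x)\bigr]^{\theta}
\]
for some $\theta>0$ satisfying $p\theta>1$ and $p\theta>q_\omega$. The proof splits naturally into two regimes. On $2Q_j$, I would invoke the $L^q$-boundedness of the Littlewood--Paley--Stein square function together with H\"older's inequality, using the reverse H\"older condition $\omega \in RH_r$ with $(q/p)'\leq r$ (guaranteed by the choice of $q_r$) to convert the unweighted $L^q$ estimate into a weighted bound in terms of $\omega(Q_j)$. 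Off $2Q_j$, I would Taylor expand $\phi_j(x-\cdot)$ around $x_{Q_j}$ to order $s$, apply the cancellation $\int a_j(y)y^\alpha\,dy=0$ for $|\alpha|\leq s$, and use the Schwartz decay of the derivatives of $\phi_j$ together with the size bound $\|a_j\|_{L^1}\leq |Q_j|\,\omega(Q_j)^{-1/p}$ to produce a polynomial decay in $|x-x_{Q_j}|$ of order exceeding $n/(p\theta)$; this is exactly what $s\geq [n(q_\omega/p-1)]$ is designed to afford.

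For the block piece the argument is parallel, but one must compensate for the absence of any vanishing moment. The saving is structural: $|P_j|>C$ while $g_d$ only involves scales $2^{-j}\leq 1$. On $2P_j$, the same $L^q$-to-$L_\omega^p$ reduction works verbatim. Off $2P_j$, since $\phi\in\mathcal{S}(\mathbb{R}^n)$ has arbitrarily rapid decay and the ratio $|x-x_{P_j}|/2^{-j}$ stays uniformly bounded away from zero for every $j\geq 0$ whenever $x\notin 2P_j$, direct size estimation of $|\phi_j\ast b_j(y)|$ via $\|b_j\|_{L^1}\leq |P_j|\,\omega(P_j)^{-1/p}$ produces the same $(M\chi_{P_j})^\theta$-type tail bound with no cancellation invoked. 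Summing the pointwise estimates and applying the Fefferman--Stein vector-valued maximal inequality to the sequences $\bigl(\lambda_j\chi_{Q_j}/\omega(Q_j)^{1/p}\bigr)_j$ and $\bigl(\mu_j\chi_{P_j}/\omega(P_j)^{1/p}\bigr)_j$ at the weighted exponent $p\theta>q_\omega$ then produces the two bounds in the statement.

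The hard part is the sharp pointwise estimate off $2Q_j$ (and its block analogue): one must simultaneously balance the smoothness-and-size estimates of $\phi_j$, the vanishing moment order $s$, and the exponent $\theta$ so that the resulting decay is strong enough to be dominated by a power of the maximal function at a weighted exponent exceeding $q_\omega$. The block case is subtler still, since the compensating mechanism is the coupling between the lower bound $|P_j|>C$ and the local restriction $2^{-j}\leq 1$ built into the definition of $g_d$; verifying that this truly substitutes for the missing cancellation is the most delicate step of the proof.
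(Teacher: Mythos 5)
Your proposal follows essentially the same route as the paper's proof: reduce to the square function norm, decouple the atomic and block sums by sublinearity, handle the near-support part via the $L^q$ bound for the square function combined with the reverse H\"older condition $(q/p)'\leq r$ (the paper's Lemma 2.3), handle the far part for atoms by Taylor expansion to order $s$ against the vanishing moments and for blocks by direct size estimates exploiting $|P_j|>C$ together with the restriction to scales $2^{-j}\leq 1$, and finish with the weighted Fefferman--Stein vector-valued maximal inequality at exponent $\gamma p>q_\omega$ with $\gamma=(n+s+1)/n$, deducing convergence from the tail bounds. The only cosmetic difference is that you work with the discrete square function $g_d$ where the paper passes to the continuous one $g$ via its Lemma 2.7; the argument is otherwise the same.
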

Theorem 1.6 follows from the atom decomposition theorem Theorem 1.7 and the reconstruction theorem Theorem 1.8 together with the fact $h^{p}_{\omega}(\mathbb R^{n})\cap L^{q}(\mathbb R^{n})$ is dense in $h^{p}_{\omega}(\mathbb R^{n})$. \par
As applications of the above atomic decomposition results, we shall prove the boundedness of the inhomogenous Calder\'on-Zygmund singular integrals and the local fractional integrals on weighted local Hardy spaces. The groundbreaking work of Hardy estimates for Calder\'on-Zygmund operators is completed by Stein and Weiss \cite{1960On}, Stein \cite{1961On}, and Fefferman and Stein \cite{fefferman1972}. In particular, weighted Hardy spaces estimates for singular integrals were proved by Str\"omberg and Torchinsky \cite{stromberg1989}. It is worth pointing out that the proof of Theorem~{\upshape\ref{th1.9}} and {\upshape\ref{th1.10}} is an adaption from the ones for local variable Hardy spaces in \cite{tan23}. Moreover, 
fractional integrals have been investigated extensively by several authors in recent years. 
Weighted Hardy space esitimates for fractional integrals were first proved by Str\"omberg and Wheeden \cite{10.2307/2000412}; see also Gatto et al. \cite{1985Fractional} and Tan \cite{tan21}. Theorem~{\upshape\ref{th1.12}} extends this result to weighted local Hardy spaces. We remark that the proof of this theorem is similar to the proof of \cite[Theorem~1.5]{2020A} but we need to concentrate on the differences.\par
Now we recall the inhomogeneous Calder\'on-Zygmund singular integrals in \cite{ding2021dual}. Define $\mathcal{D}(\mathbb R^{n})$ to be the space of all smooth functions with compact support. The operator $T$ is said to be an inhomogeneous Calder\'on-Zygmund integral if $T$ is a continuous linear operator from $\mathcal{D}$ to $\mathcal{D}^{\prime}$ defined by
\[
\left<T(f),g\right>=\int\mathcal{K}(x,y)f(y)g(x)dxdy
\]
for all $f,g\in\mathcal{D}(\mathbb R^{n})$ with disjoint supports, where $\mathcal{K}(x,y)$, the kernel of $T$, satisfies the conditions as follows.
\[
\vert\mathcal{K}(x,y)\vert\leq C\min\left\{\frac{1}{\vert x-y\vert^{n}},\frac{1}{\vert x-y\vert^{n+\delta}}\right\}\ {\rm for\ some\ \delta>0\ and}\ x\neq\ y
\]
and for $\epsilon\in(0,1)$
\[
\vert\mathcal{K}(x,y)-\mathcal{K}(x,y^{\prime})\vert+\vert\mathcal{K}(y,x)-\mathcal{K}(y^{\prime},x)\vert\leq C\frac{\vert y-y^{\prime}\vert^{\epsilon}}{\vert x-y\vert^{n+\epsilon}},
\]
where $\vert y-y^{\prime}\vert\leq\frac{1}{2}\vert x-y\vert$.
\begin{theorem}\label{th1.9}
    Let $0<p<\infty$ and $\omega\in A_{(\frac{n+\eta}{n})p}$ where $\eta=\epsilon\wedge\delta$. Suppose that $T$ is an inhomogeneous Calder\'on-Zygmund singular integral. If $T$ is a bounded operator on $L^{2}$, then $T$ can be extended to an $(h_{\omega}^{p}\text{-}L_{\omega}^{p})$ bounded operator. To be precise, there exists a constant $C$ such that
    \[
    \left \| T(f) \right \|_{L_{\omega}^{p}}\leq C\left \| f \right \|_{h_{\omega}^{p}}.
    \]
\end{theorem}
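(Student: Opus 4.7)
The strategy is to combine the atomic decomposition of Theorem \ref{th1.7} with uniform $L^p_\omega$ bounds for $T$ on individual atoms and blocks, adapting the argument from \cite{tan23}. First, by the density of $h^p_\omega\cap L^q$ in $h^p_\omega$ it suffices to prove the inequality for $f\in h^p_\omega\cap L^q$, with $q$ as in Theorem \ref{th1.7}. Since $T$ is bounded on $L^2$, standard Calder\'on--Zygmund theory yields $L^q$-boundedness for all $1<q<\infty$. Consequently, the decomposition $f=\sum_j\lambda_j a_j+\sum_j\mu_j b_j$ converges in $L^q$, so $Tf=\sum_j\lambda_j Ta_j+\sum_j\mu_j Tb_j$ holds in $L^q$ and almost everywhere, reducing the task to uniform control of $\|Ta\|_{L^p_\omega}$ and $\|Tb\|_{L^p_\omega}$.

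For an $\omega\text{-}(p,q,s)\text{-}$atom $a$ on a cube $Q$ with $|Q|\leq C$, I would split $\mathbb{R}^n=2Q\cup(2Q)^c$. On $2Q$, the unweighted bound $\|Ta\|_{L^q}\leq C\|a\|_{L^q}\leq C|Q|^{1/q}\omega(Q)^{-1/p}$ combined with H\"older's inequality at exponent $q/p>1$ and the $A_q$-doubling $\omega(2Q)\lesssim\omega(Q)$ give $\int_{2Q}|Ta|^p\omega\lesssim 1$. On $(2Q)^c$, the zeroth-order vanishing moment (order zero suffices since only H\"older regularity of order $\epsilon$ is available in the kernel) together with the kernel smoothness yields
\[
|Ta(x)|\leq C\,\frac{l(Q)^{n+\epsilon}}{|x-x_Q|^{n+\epsilon}}\,\omega(Q)^{-1/p}.
\]
A dyadic summation over $(2^{k+1}Q\setminus 2^kQ)_{k\geq 1}$, together with $\omega\in A_{(n+\eta)p/n}$ and the openness of the $A_q$-class (to close the geometric series when $\eta=\epsilon$), then controls $\int_{(2Q)^c}|Ta|^p\omega$.

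For an $\omega\text{-}(p,q,s)\text{-}$block $b$ supported in $P$ with $|P|>C$, the near estimate is identical. On $(2P)^c$, the vanishing moment is unavailable, but the condition $|P|>C$ forces $|x-y|\geq l(P)/2$ to be bounded below by a positive constant, enabling the sharper kernel bound $|\mathcal{K}(x,y)|\leq C|x-y|^{-n-\delta}$. This yields
\[
|Tb(x)|\leq C\,\frac{l(P)^n}{|x-x_P|^{n+\delta}}\,\omega(P)^{-1/p},
\]
and the same annular summation closes the estimate, since $\eta\leq\delta$.

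When $0<p\leq 1$, the proof terminates by $p$-subadditivity of $\|\cdot\|_{L^p_\omega}^p$: noting that $\lambda_j^p=\|\lambda_j\chi_{Q_j}/\omega(Q_j)^{1/p}\|_{L^p_\omega}^p$, Theorem \ref{th1.7} at $\eta=p$ gives
\[
\|Tf\|_{L^p_\omega}^p\leq \sum_j\lambda_j^p\|Ta_j\|_{L^p_\omega}^p+\sum_j\mu_j^p\|Tb_j\|_{L^p_\omega}^p\leq C\|f\|_{h^p_\omega}^p.
\]
The range $p>1$ is the principal obstacle: a naive triangle inequality produces $\sum_j\lambda_j$ on the right, which is strictly weaker than $\|f\|_{h^p_\omega}$. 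I would overcome this by upgrading the far-field estimates to pointwise bounds of the form $|Ta_j(x)|\lesssim \omega(Q_j)^{-1/p}[M(\chi_{Q_j})(x)]^{(n+\epsilon)/n}$ (and analogously for blocks), and then applying the weighted Fefferman--Stein vector-valued maximal inequality together with Theorem \ref{th1.7} at a suitably chosen exponent $\eta>1$, leveraging the openness of the $A_q$-class to absorb the resulting exponent loss.
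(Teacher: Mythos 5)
Your overall route coincides with the paper's: decompose $f$ into $\omega$-$(p,q,s)$-atoms and blocks via Theorem~\ref{th1.7}, split each $Ta_j$, $Tb_j$ into a near part and a far part, use the zeroth moment plus kernel regularity for atoms and the extra kernel decay plus $l(P_j)\gtrsim 1$ for blocks, and for $p\le 1$ your per-atom bounds combined with $p$-subadditivity and the $\eta=p$ case of Theorem~\ref{th1.7} do close the argument. The far-field term for $p>1$ is also handled correctly in your sketch: the pointwise bound $|Ta_j(x)|\lesssim \omega(Q_j)^{-1/p}\bigl(M\chi_{Q_j}(x)\bigr)^{(n+\eta)/n}$ off $Q_j^{*}$ plus the weighted Fefferman--Stein inequality (with $\omega\in A_{\gamma p}$, $\gamma=\tfrac{n+\eta}{n}$) is exactly what the paper does.

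The gap is the \emph{near-field} term when $p>1$. Your proposed repair for $p>1$ only upgrades the far-field estimates; it says nothing about
\[
\Bigl\| \sum_{j}\lambda_j\,|Ta_j|\,\chi_{2Q_j}\Bigr\|_{L^p_\omega}
\quad\text{and}\quad
\Bigl\| \sum_{j}\mu_j\,|Tb_j|\,\chi_{2P_j}\Bigr\|_{L^p_\omega},
\]
and the maximal-function/vector-valued machinery does not apply there, since on $2Q_j$ you have no pointwise domination of $|Ta_j|$ by $M\chi_{Q_j}$ raised to a power. The per-atom bound $\int_{2Q_j}|Ta_j|^p\omega\lesssim 1$ that you establish cannot be summed when $p>1$ (the triangle inequality again produces $\sum_j\lambda_j$). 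The paper closes exactly this step with Lemma~\ref{le2.3} (equivalently Lemma~\ref{le2.8}, from Cruz-Uribe--Moen--Nguyen): choosing $q_0>\max\{\tfrac{n+\eta}{n}p,\tfrac{r}{r-1}p\}$ so that $\omega\in RH_{(q_0/p)'}$, one has
\[
\Bigl\|\sum_j g_j\Bigr\|_{L^p(\omega)}\lesssim\Bigl\|\sum_j\Bigl(\frac{1}{|Q_j|}\int_{Q_j}g_j^{q_0}\,dy\Bigr)^{1/q_0}\chi_{Q_j}\Bigr\|_{L^p(\omega)},
\]
which together with the $L^{q_0}$-boundedness of $T$ converts the near-field sum into $\bigl\|\sum_j\lambda_j\chi_{Q_j^*}/\omega(Q_j)^{1/p}\bigr\|_{L^p_\omega}$, and Remark~\ref{re2.2} then removes the dilation $Q_j^*\to Q_j$. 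Without this lemma (or an equivalent duality/reverse-H\"older argument), your proof only covers $0<p\le 1$, whereas the theorem asserts the bound for all $0<p<\infty$.
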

To state the $(h^{p}_{\omega}(\mathbb R^{n}),h^{p}_{\omega}(\mathbb R^{n}))$\text{-}boundedness of $T$, we assume one additional condition on $T$, $\int_{\mathbb R^{n}}T(a)(x)dx=0$ for the $\omega\text{-}(p,q,s)\text{-}$atom $a$. Then if $T$ satisfies the above moment condition, we write $T^{loc}_{*}(1)=0$.
\begin{theorem}\label{th1.10}
Let $0<p<\infty$ and $\omega\in A_{(\frac{n+\eta}{n})p}$ where $\eta=\epsilon\wedge\delta$. Suppose that $T$ is an inhomogeneous Calder\'on-Zygmund singular integral. If $T$ is a bounded operator on $L^{2}$ and $T^{loc}_{*}(1)=0$, then $T$ has a unique extension on $h^{p}_{\omega}$ and, moreover, there exists a constant $C$ such that
    \[
    \left \| T(f) \right \|_{h_{\omega}^{p}}\leq C\left \| f \right \|_{h_{\omega}^{p}}
    \]
    for all $f\in h_{\omega}^{p}$.
\end{theorem}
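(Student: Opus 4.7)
The plan is to reduce the claim to uniform atom/block estimates by combining the atomic decomposition Theorem~\ref{th1.7} with the reconstruction Theorem~\ref{th1.8}. Since $h^{p}_{\omega}\cap L^{q}$ is dense in $h^{p}_{\omega}$, fix $f$ in this intersection and write $f=\sum_{j}\lambda_{j}a_{j}+\sum_{j}\mu_{j}b_{j}$ with $L^{q}$-convergence. Because $T$ is $L^{q}$-bounded by the classical Calder\'on--Zygmund theory applied to the $L^{2}$-bounded $T$, the image decomposes termwise in $L^{q}$. By Theorem~\ref{th1.8}, it then suffices to rewrite each $T(a_{j})$ and $T(b_{j})$ as a sum of $\omega\text{-}(p,q,s)\text{-}$atoms/blocks whose weighted coefficients control the corresponding original term; a standard density argument will deliver the unique bounded extension to all of $h^{p}_{\omega}$.

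For an atom $a$ supported in a cube $Q$, make the annular split $T(a)=T(a)\chi_{2Q}+\sum_{k\geq 1}T(a)\chi_{E_{k}}$ with $E_{k}=2^{k+1}Q\setminus 2^{k}Q$. The local piece is, up to a fixed multiplicative constant, an atom (or block if $|2Q|>C$) in $2Q$ by $L^{q}$-boundedness. For $x\in E_{k}$, the vanishing integral of $a$ together with the $\epsilon$-H\"older regularity of $\mathcal{K}$ gives
\[
|T(a)(x)|\leq C\int_{Q}\frac{|y-x_{Q}|^{\epsilon}}{|x-x_{Q}|^{n+\epsilon}}|a(y)|\,dy\leq C\,2^{-k(n+\epsilon)}\omega(Q)^{-1/p},
\]
so $\|T(a)\chi_{E_{k}}\|_{L^{q}}\leq C\,2^{-k\epsilon}|2^{k+1}Q|^{1/q}\omega(Q)^{-1/p}$. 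The hypothesis $T^{loc}_{*}(1)=0$ yields $\int T(a)=0$, which via a telescoping redistribution of mean values across consecutive annuli turns each piece into a zero-average function. After normalization each becomes a $\omega\text{-}(p,q,s)\text{-}$atom in $2^{k+1}Q$ (or a block when $|2^{k+1}Q|>C$) with coefficient bounded by $C\lambda\,2^{-k\epsilon}[\omega(2^{k+1}Q)/\omega(Q)]^{1/p}$.

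For a block $b$ supported in $P$ with $|P|>C$, the same annular split produces pieces supported in the large cubes $2^{k+1}P$, where no cancellation is needed. Since $|x-y|\geq\ell(P)\geq C^{1/n}$ for $x\in E_{k}(P)$, the size bound $|\mathcal{K}(x,y)|\leq C|x-y|^{-n-\delta}$ gives $\|T(b)\chi_{E_{k}(P)}\|_{L^{q}}\leq C\,2^{-k\delta}|2^{k+1}P|^{1/q}\omega(P)^{-1/p}$, producing $\omega\text{-}(p,q,s)\text{-}$blocks in $2^{k+1}P$ with coefficients $C\mu\,2^{-k\delta}[\omega(2^{k+1}P)/\omega(P)]^{1/p}$.

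The main obstacle is summing these geometric series inside the $L^{p}_{\omega}$ norm appearing in Theorem~\ref{th1.8}. Here the hypothesis $\omega\in A_{(n+\eta)p/n}$ with $\eta=\epsilon\wedge\delta$ is exactly what is required: by the openness of the $A_{q}$ classes there exists $\tau>0$ with $\omega\in A_{(n+\eta)p/n-\tau/p}$, hence $\omega(2^{k+1}Q)\leq C\,2^{k((n+\eta)p-\tau)}\omega(Q)$, which combined with the decay $2^{-k\eta p}$ above leaves a summable factor $2^{-k\tau}$. Controlling $\chi_{2^{k+1}Q}$ pointwise by a constant multiple of $2^{kn\alpha}[M(\chi_{Q})]^{\alpha}$ for a well-chosen $\alpha$, and applying the weighted (vector-valued) maximal inequality in the appropriate $A_{q}$-class, absorbs the sum $\sum_{j,k}c_{k,j}\chi_{2^{k+1}Q_{j}}/\omega(2^{k+1}Q_{j})^{1/p}$ into a constant multiple of $\sum_{j}\lambda_{j}\chi_{Q_{j}}/\omega(Q_{j})^{1/p}$ in $L^{p}_{\omega}$, with the analogous absorption on the block side. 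Feeding these totals into Theorem~\ref{th1.8} yields $\|T(f)\|_{h^{p}_{\omega}}\leq C\|f\|_{h^{p}_{\omega}}$ on the dense subspace, and density completes the proof.
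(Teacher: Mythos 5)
Your route is genuinely different from the paper's. The paper never re-atomizes $T(a_j)$ or invokes the reconstruction theorem (Theorem~\ref{th1.8}) for this result: it works directly with $\|T(f)\|_{h^{p}_{\omega}}=\|M_{\Phi}(Tf)\|_{L^{p}_{\omega}}$, splits $M_{\Phi}(Ta_j)$ into the region $2\sqrt{n}Q_j^{*}$ (controlled by $M(Ta_j)$) and its complement, and on the far region uses the single cancellation $\int T(a_j)=0$ through the mean value theorem applied to $\Phi_t$ when $t>|x-c_{Q_j}|/2$, obtaining the pointwise bound $C(M\chi_{Q_j})^{\gamma}(x)\omega(Q_j)^{-1/p}$ with $\gamma=(n+\eta)/n$; it then concludes exactly as in Theorem~\ref{th1.9} via Lemma~\ref{le2.8}, Remark~\ref{re2.2} and the Fefferman--Stein inequality. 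Your molecular/re-atomization scheme (annular split, telescoping of means, feed the new atoms and blocks into Theorem~\ref{th1.8}) is a classical alternative and is in principle viable here: since $\omega\in A_{(n+\eta)p/n}$ forces $q_{\omega}/p\leq (n+\eta)/n$, hence $n(q_{\omega}/p-1)\leq\eta<1$, only order-zero moments are required of the new atoms, so your telescoping (which only restores the mean) does produce admissible atoms, and the far pieces on cubes with $|2^{k+1}Q|>C$ may indeed be treated as blocks with no cancellation. What your approach buys is independence from the maximal-function characterization of $h^{p}_{\omega}$; what the paper's buys is avoiding the telescoping bookkeeping entirely.

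There is, however, a concrete quantitative error that breaks your key summation as written. From your own (correct) pointwise bound $|T(a)(x)|\leq C2^{-k(n+\epsilon)}\omega(Q)^{-1/p}$ on $E_k$, the $L^{q}$ estimate is
\[
\|T(a)\chi_{E_k}\|_{L^{q}}\leq C\,2^{-k(n+\epsilon)}|2^{k+1}Q|^{1/q}\omega(Q)^{-1/p},
\]
not $C\,2^{-k\epsilon}|2^{k+1}Q|^{1/q}\omega(Q)^{-1/p}$; the same correction ($2^{-k(n+\delta)}$ in place of $2^{-k\delta}$) applies to the block pieces. This matters: the normalized coefficient on $2^{k+1}Q$ is $\sigma_k\bigl[\omega(2^{k+1}Q)/\omega(Q)\bigr]^{1/p}$ with $\sigma_k$ the decay factor, and since $\omega\in A_{(n+\eta)p/n}$ only gives $\bigl[\omega(2^{k+1}Q)/\omega(Q)\bigr]^{1/p}\lesssim 2^{(k+1)(n+\eta)}$, the choice $\sigma_k=2^{-k\epsilon}$ yields a divergent factor $2^{k(n+\eta-\epsilon)}$. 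With the correct $\sigma_k=2^{-k(n+\epsilon)}$ the product is $2^{k(\eta-\epsilon)}\leq 1$, which is still only borderline when $\eta=\epsilon$, and it is precisely here that your appeal to the openness of the $A_{q}$ classes (or, equivalently, running the $\chi_{2^{k+1}Q_j}\lesssim 2^{(k+1)n\gamma'}(M\chi_{Q_j})^{\gamma'}$ trick with some $\gamma'<(n+\eta)/n$ for which $\omega\in A_{\gamma' p}$ still holds) is needed to extract a genuinely geometric factor $2^{-k\tau}$. Also note that the quantity you cite in that step, ``the decay $2^{-k\eta p}$ above,'' does not appear anywhere earlier in your argument and would not suffice for small $p$; the bookkeeping should be redone consistently with $\sigma_k=2^{-k(n+\eta)}$. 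Once these exponents are repaired the proof goes through, but as stated the summation step fails.
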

We also recall the following local fractional integral which is introduced by D. Yang and S. Yang \cite{2011Weighted}.
\begin{definition}
        Let $\alpha\in[0,n)$ and let $\varphi_{0}\in\mathcal{D}$ be such $\varphi_{0}\equiv1$ on $Q(0,1)$ and ${\rm supp}(\varphi_{0})\subset Q(0,2)$. The local fractional integral $I_{\alpha}^{loc}(f)$ of $f$ is defined by
    \[
    I_{\alpha}^{loc}(f)(x)\equiv\int_{\mathbb R^{n}}\frac{\varphi_{0}(y)}{\vert y\vert^{n-\alpha}}f(x-y)dy.
    \]
\end{definition}
Now we show that the local fractional integrals are bounded from $h^{p}(\omega^{p})$ to $L^{q}(\omega^{q})$ when $1<q<\infty$ and from $h^{p}(\omega^{p})$ to $h^{q}(\omega^{q})$ when $0<q\leq1$.
\begin{theorem}\label{th1.12}
Let $0<\alpha<n$ and $0<p<\frac{n}{\alpha}$. Define $q$ by $\frac{1}{q}=\frac{1}{p}-\frac{\alpha}{n}$. If a weight $\omega$ is such that $\omega^{p}\in RH_{\frac{q}{p}}$, then $I^{loc}_{\alpha}$ admits a bounded extension from $h^{p}(\omega^{p})$ to $L^{q}(\omega^{q})$ when $1<q<\infty$ and $I^{loc}_{\alpha}$ admits a bounded extension from $h^{p}(\omega^{p})$ to $h^{q}(\omega^{q})$ when $0<q\leq1$.
\end{theorem}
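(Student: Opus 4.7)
The plan is to combine the atomic decomposition of Theorem~1.6 with a direct analysis of $I^{loc}_\alpha$ on individual atoms and blocks, invoking the hypothesis $\omega^p\in RH_{q/p}$ to convert the $\omega^p$-normalization of an atom (or block) into an $\omega^q$-normalization. Given $f\in h^p(\omega^p)$, Theorem~1.6 supplies a decomposition $f=\sum_j\lambda_j a_j+\sum_j\mu_j b_j$ with the quasi-norm control. I will prove that $I^{loc}_\alpha a$ is, up to a universal constant, an $\omega^q$-$(q,q_2,s)$-atom for an appropriate exponent $q_2$, and likewise that $I^{loc}_\alpha b$ is an $\omega^q$-$(q,q_2,s)$-block, with coefficient-sequence control transferred by the Fefferman--Stein vector-valued maximal inequality on $L^p_{\omega^p}$ (because the enlarged cubes $Q_j^*$ satisfy $Q_j^*\subset cQ_j$, hence $\chi_{Q_j^*}\lesssim M\chi_{Q_j}$). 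Once this passage is established, Theorem~1.8 delivers the $h^q(\omega^q)$ estimate when $0<q\leq 1$, while for $1<q<\infty$ a direct summation in $L^q(\omega^q)$—using the $\eta=q$ form of Theorem~1.7 to dominate the series by its atomic square function—yields the $L^q(\omega^q)$ estimate.

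For the key atom computation, take an $\omega^p$-$(p,q_1,s)$-atom $a$ supported in $Q$ with $|Q|\leq C$. The kernel $K(y):=\varphi_0(y)/|y|^{n-\alpha}$ is supported in $Q(0,2)$, so $I^{loc}_\alpha a$ is supported in $Q^*:=Q+Q(0,2)$, still a cube of bounded size. The vanishing moments of $a$ up to order $s$ will propagate to $I^{loc}_\alpha a$: for $|\gamma|\leq s$, the multinomial expansion of $(z+y)^\gamma$ reduces $\int(I^{loc}_\alpha a)(x)\,x^\gamma\,dx$ to a sum of products $\int a(z)z^\beta\,dz\cdot\int K(y)y^{\gamma-\beta}\,dy$ with $|\beta|\leq|\gamma|\leq s$, each vanishing by the moment hypothesis on $a$. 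Hardy--Littlewood--Sobolev gives $\|I^{loc}_\alpha a\|_{L^{q_2}}\lesssim\|a\|_{L^{q_1}}$ with $1/q_2=1/q_1-\alpha/n$, which combined with the atom bound $\|a\|_{L^{q_1}}\leq|Q|^{1/q_1}\omega^p(Q)^{-1/p}$ and the reverse Hölder consequence $\omega^q(Q^*)^{1/q}\lesssim|Q^*|^{1/q-1/p}\omega^p(Q^*)^{1/p}\lesssim|Q^*|^{1/q-1/p}\omega^p(Q)^{1/p}$ (the last step by $A_\infty$ doubling) yields $\|I^{loc}_\alpha a\|_{L^{q_2}}\leq C|Q^*|^{1/q_2}\omega^q(Q^*)^{-1/q}$, identifying $I^{loc}_\alpha a$ as a fixed multiple of an $\omega^q$-$(q,q_2,s)$-atom. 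The block case $|P|>C$ is strictly parallel but simpler: no moment check is needed, $|P^*|\asymp|P|$, and the same Sobolev-plus-reverse-Hölder chain shows $I^{loc}_\alpha b$ is a multiple of an $\omega^q$-$(q,q_2,s)$-block.

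The main obstacle will be the simultaneous choice of the auxiliary exponents: $q_1$ must exceed $\max\{q_{\omega^p},q_r\}$ so that Theorem~1.6 applies, $q_2>1$ is required for Hardy--Littlewood--Sobolev, the Sobolev relation $1/q_2=1/q_1-\alpha/n$ is forced by scaling, and the reverse Hölder exponent attached to $\omega^p\in RH_{q/p}$ must leave enough slack for the weight conversion; the moment order $s$ must be fixed at the outset to dominate both $[n(q_{\omega^p}/p-1)]$ and $[n(q_{\omega^q}/q-1)]$ so that $I^{loc}_\alpha a$ qualifies as an atom in the target space. Once this bookkeeping is organized, the remainder of the argument is mechanical and parallels \cite[Theorem~1.5]{2020A}.
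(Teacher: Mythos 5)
There is a genuine gap at the heart of your plan: the claim that $I^{loc}_{\alpha}a$ is a fixed multiple of an $\omega^{q}$-$(q,q_{2},s)$-atom adapted to $Q^{*}=Q+Q(0,2)$ is false in general. The kernel $\varphi_{0}(y)|y|^{\alpha-n}$ has support of \emph{fixed unit scale}, so when $Q$ is a very small cube the support of $I^{loc}_{\alpha}a$ is a cube of side $\approx 4$, not a bounded dilate of $Q$. Two of your steps break precisely because of this. First, the weight conversion: you invoke ``$\omega^{p}(Q^{*})^{1/p}\lesssim\omega^{p}(Q)^{1/p}$ by $A_{\infty}$ doubling,'' but doubling only controls $\omega^{p}(\lambda Q)$ for a \emph{bounded} dilation factor $\lambda$; here $l(Q^{*})/l(Q)\to\infty$ as $l(Q)\to 0$, and in fact the inequality goes the wrong way ($\omega^{p}(Q^{*})\geq\omega^{p}(Q)$). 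Even for $\omega\equiv 1$ the size normalization fails: one gets $\Vert I^{loc}_{\alpha}a\Vert_{L^{q_{2}}}\lesssim |Q|^{1/q_{1}-1/p}$, which blows up as $|Q|\to 0$, whereas an atom adapted to the unit-scale cube $Q^{*}$ would need a uniform bound $\lesssim |Q^{*}|^{1/q_{2}-1/q}$. Second, the coefficient transfer $\chi_{Q_{j}^{*}}\lesssim M\chi_{Q_{j}}$ fails for the same reason: on $Q_{j}^{*}$ one only has $M\chi_{Q_{j}}\approx |Q_{j}|/|Q_{j}^{*}|$, which is tiny. So the reduction to Theorem~1.8 in the target space does not go through. (Your moment computation, expanding $(z+y)^{\gamma}$ to show $\int I^{loc}_{\alpha}a(x)\,x^{\gamma}\,dx=0$, is algebraically correct, but it does not rescue the size condition.)

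The paper avoids the ``atom maps to atom'' paradigm entirely. For $1<q<\infty$ it proves the pointwise bound $|I^{loc}_{\alpha}(a_{j})(x)|\lesssim l(Q_{j})^{\alpha}\bigl(M\chi_{Q_{j}}(x)\bigr)^{\gamma}\omega(Q_{j})^{-1/p}$ (size condition near $Q_{j}$, Taylor expansion of the kernel to high order $d$ away from $Q_{j}$, with $\gamma$ large), and then sums using the Fefferman--Stein inequality together with Lemma~2.9, which is exactly the device that converts an $L^{q}(\omega^{q})$ norm of $\sum_{j}\lambda_{j}|Q_{j}|^{\alpha/n}\chi_{Q_{j}}$ into the $L^{p}(\omega^{p})$ norm of $\sum_{j}\lambda_{j}\chi_{Q_{j}}$ under $\omega^{p}\in RH_{q/p}$; blocks are handled by the averaging Lemma~2.8 plus the classical $L^{p_{0}}$--$L^{q_{0}}$ bound. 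For $0<q\leq 1$ it does not build new atoms but estimates $M_{\Phi}(I^{loc}_{\alpha}a_{j})$ directly, using Lemma~2.10 for the off-support decay and the weighted vector-valued fractional maximal inequality (Lemma~2.11) for the summation. If you want to salvage your route, you would have to replace ``atom'' by a molecular notion, or switch to these pointwise-plus-summation lemmas; as written, the argument does not close.
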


Throughout this paper, $C$ or $c$ denotes a positive constant that is independent of the main parameters involved but may vary at each occurrence. To denote the dependence of the constants on some parameter $s$, we will write $C_{s}$. We denote $f\leq Cg$ by $f\lesssim g$. If $f\lesssim g\lesssim f$, we write $f\sim g$ or $f\approx g$. Denote $Q(x,l(Q))$ the closed cube centered at $x$ and of side-length $l(Q)$. Similarly, given $Q=Q(x,l(Q))$ and $\lambda>0$, $\lambda Q$ means the cube with the same center $x$ and with side-length $\lambda l(Q)$. We denote $Q^{*}=2\sqrt{n}Q$. Moreover, we use the notation $j\wedge k=\min\{j,k\}$.

\section{Preliminaries}

In this section, we present some known results that will be used in the next sections and establish a new reproducing formula.\par
Firstly, we recall some known results about weights. For more details, see \cite{weights2011,2001fourier,1985Weighted}. Suppose that a weight $\omega$ is a non-negative, locally integrable function such that $0<\omega(x)<\infty$ for almost every $x\in\mathbb R^{n}$. It is said that $\omega$ is in the Muckenhoupt class $A_{p}$ for $1<p<\infty$ if
\[
[\omega]_{A_{p}}=\sup\limits_{Q}\left(\frac{1}{Q}\int_{Q}\omega(x)dx\right)\left(\frac{1}{Q}\int_{Q}\omega(x)^{-\frac{1}{p-1}}dx\right)^{p-1}<\infty,
\]
where $Q$ is any cube in $\mathbb {R}^{n}$ and when $p=1$, a weight $\omega\in A_{1}$ if for a.e. $x\in\mathbb {R}^{n}$,
\[
M\omega(x)\leq C\omega(x),
\]
where $M$ is the Hardy-Littlewood maximal operator defined by
\[
Mf(x)=\sup\limits_{x\in Q}\frac{1}{\vert Q\vert}\int_{Q}f(u)du.
\]
Therefore, define the set
\[
A_{\infty}(\mathbb {R}^{n})=\bigcup\limits_{1\leq p<\infty}A_{p}(\mathbb {R}^{n}).
\]
Given a weight $\omega\in A_{\infty}(\mathbb {R}^{n})$, define $$q_{\omega}=\inf\{q\geq1\colon \omega\in A_{q}\}.$$
Given a weight $\omega\in A_{\infty}(\mathbb {R}^{n})$ 
and $0<p<\infty$. Then the weighted Lebesgue space is defined by
\[
L_{\omega}^{p}(\mathbb {R}^{n})=\left\{f\colon\int\vert f(x)\vert^{p}\omega(x)dx<\infty\right\}
\]
where $f$ are measurable functions on $\mathbb {R}^{n}$. A weight $\omega\in A_{\infty}$ if and only if $\omega\in RH_{r}$ for some $r>1$: that is, for every cube $Q$,
\[
\left(\frac{1}{\vert Q\vert}\int_{Q}\omega(x)^{r}dx\right)^{\frac{1}{r}}\leq \frac{C}{\vert Q\vert}\int_{Q}\omega(x)dx.
\]
Furthermore, we can obtain the property that $\omega\in RH_{r}$ if and only if $\omega^{r}\in A_{\infty}$. Given $1<p,\ q<\infty$, a weight satisfies the $A_{p,q}$ condition of Muckenhoupt and Wheeden if for every cube $Q$,
\[
\left(\frac{1}{\vert Q\vert}\int_{Q}\omega^{q}dx\right)^{\frac{1}{q}}\left(\frac{1}{\vert Q\vert}\int_{Q}\omega^{-p^{\prime}}dx\right)^{\frac{1}{p^{\prime}}}\leq C.
\]
It follows from the defintion that $\omega\in A_{p,q}$ if and only if $\omega^{q}\in A_{1+\frac{q}{p^{\prime}}}$. When $p=1$ and $q>1$, it is said that $\omega\in A_{1,q}$ if for every cube $Q$ and almost every $x\in Q$,
\[
\frac{1}{\vert Q\vert}\int_{Q}\omega(x)^{q}dx\leq C\omega(x)^{q},
\]
which is clearly equivalent to $\omega^{q}\in A_{1}$.\\
Given $0\leq\alpha<n$ and $1<p<\frac{n}{\alpha}$, define $q$ by $\frac{1}{p}-\frac{1}{q}=\frac{\alpha}{n}$. If $\omega\in A_{p,q}$, the fractional maximal operator
\[
M_{\alpha}(f)(x)=\sup\limits_{Q}\vert Q\vert^{\frac{\alpha}{n}}\left(\frac{1}{\vert Q\vert}\int_{Q}\vert f(y)\vert dy\right)\chi_{Q}(x)
\]
is bounded from $L^{p}(\omega^{p})$ to $L^{q}(\omega^{q})$.\par
Now we recall two lemmas which will be applied to the proofs in Section~{\upshape\ref{sec3}}.
First we need the weighted Fefferman--Stein vector-valued maximal inequality \cite{andersen1981weighted} as follows.
\begin{lemma}\label{le2.1}
    Let $1<p,\ q<\infty$, $\omega\in A_{p}(\mathbb {R}^{n}),\ f=\{f_{i}\}_{i\in\mathbb{Z}},\ f_{i}\in L_{loc},$\\
\[
\left \| \left \|\mathbb{M}(f)\right \|_{l^{q}} \right \|_{L_{\omega}^{p}}\leq C\left \| \left \|f\right \|_{l^{q}} \right \|_{L_{\omega}^{p}}
\]
where $\mathbb{M}(f)=\{M(f_{i})\}_{i\in\mathbb{Z}}.$
\end{lemma}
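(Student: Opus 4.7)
The plan is to reduce the vector-valued inequality to the scalar Muckenhoupt theorem via Rubio de Francia's extrapolation principle. First I would establish the diagonal case $p=q$: for any $\omega\in A_p(\mathbb{R}^n)$, the classical scalar weighted Hardy--Littlewood maximal inequality $\|Mg\|_{L^p_\omega}\le C_p\|g\|_{L^p_\omega}$ holds with a constant depending only on $n$, $p$, and $[\omega]_{A_p}$. Applying this componentwise and interchanging the $\ell^p$-sum with the weighted integral by Fubini--Tonelli yields
\[
\Bigl\|\|\mathbb{M}(f)\|_{\ell^p}\Bigr\|_{L^p_\omega}^{p}=\sum_{i\in\mathbb{Z}}\|Mf_i\|_{L^p_\omega}^{p}\le C_p^{\,p}\sum_{i\in\mathbb{Z}}\|f_i\|_{L^p_\omega}^{p}=C_p^{\,p}\Bigl\|\|f\|_{\ell^p}\Bigr\|_{L^p_\omega}^{p},
\]
which settles the assertion whenever $q=p$, for every $p\in(1,\infty)$ and every $\omega\in A_p$.

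Next I would invoke Rubio de Francia's extrapolation theorem, viewing the componentwise maximal operator $\mathbb{M}$ as a sublinear map on $\ell^q$-valued functions. The extrapolation principle asserts that if a pair $(F,G)$ of non-negative measurable functions satisfies $\|G\|_{L^{p_0}_\omega}\le C\|F\|_{L^{p_0}_\omega}$ for some fixed $p_0\in(1,\infty)$ and every $\omega\in A_{p_0}$, with $C$ depending only on $n$, $p_0$, and $[\omega]_{A_{p_0}}$, then the same inequality holds for every $p\in(1,\infty)$ and every $\omega\in A_p$. Specializing to $F=\|f\|_{\ell^q}$, $G=\|\mathbb{M}(f)\|_{\ell^q}$, and $p_0=q$, the preceding diagonal estimate verifies the hypothesis uniformly in $\omega\in A_q$; extrapolation then delivers the stated inequality at the desired exponent $p$.

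The main obstacle is securing the quantitative dependence of the scalar bound at the level $p_0=q$: the constant must depend on $\omega$ only through $[\omega]_{A_q}$ in a controlled way, since this is precisely the input that powers the extrapolation machinery. This quantitative bound is the content of the sharp weighted Muckenhoupt inequality for $M$, and once it is in place the vector-valued extension becomes a purely abstract consequence with no further analytic input required. An alternative route, avoiding extrapolation, would be to first treat $q=2$ through a weighted Littlewood--Paley/$g$-function argument and then interpolate in $q$ using the Marcinkiewicz theorem in the $\ell^q$ setting together with the endpoint $q=\infty$ case, but the extrapolation proof is considerably shorter and the one I would write down.
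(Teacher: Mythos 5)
Your proposal is correct, but note that the paper does not prove this lemma at all: it is quoted as the known weighted Fefferman--Stein vector-valued maximal inequality with a citation to Andersen and John (1981), so there is no in-paper argument to match. Your derivation --- the diagonal case $p=q$ by Fubini plus the scalar Muckenhoupt bound, followed by Rubio de Francia extrapolation applied to the pairs $\bigl(\|f\|_{\ell^q},\|\mathbb{M}(f)\|_{\ell^q}\bigr)$ at the exponent $p_0=q$ --- is the standard modern proof, and in fact appears essentially verbatim in the Cruz-Uribe--Martell--P\'erez monograph that this paper cites elsewhere for weight theory. It differs from the original Andersen--John route, which adapts the Fefferman--Stein distributional/good-$\lambda$ argument directly to $A_p$ weights; the extrapolation proof is shorter and cleanly isolates the only analytic input (the scalar maximal theorem with constant depending on $\omega$ only through $[\omega]_{A_q}$, which the classical proof via reverse H\"older already gives --- no sharp Buckley-type bound is needed). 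Two routine points you should make explicit in a written version: the extrapolation theorem is applied to a family of pairs for which the relevant norms are finite, so one first restricts to sequences with finitely many nonzero $f_i$, each bounded with compact support, and recovers the general case by monotone convergence; and the diagonal identity $\bigl\|\|\mathbb{M}(f)\|_{\ell^p}\bigr\|_{L^p_\omega}^p=\sum_i\|Mf_i\|_{L^p_\omega}^p$ is exactly Tonelli, valid since all terms are non-negative.
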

\begin{remark}\label{re2.2}
    If we let $f_{i}=\chi_{Q_{i}}$, for some collection of cubes $Q_{i}$, then given $0<p<\infty$, $\tau>1$ and $\omega\in A_{\infty}$, there exists $r>1$ such that $\omega\in A_{rp}$. Thus we have that
    \[
    \left \| \sum\limits_{i}\chi_{\tau Q_{i}} \right \|_{L^{p}(\omega)}\lesssim\left \| \left(\sum\limits_{i}(M\chi_{Q_{i}})^{r}\right)^{\frac{1}{r}} \right \|^r_{L^{rp}(\omega)}\lesssim\left \| \sum\limits_{i}\chi_{Q_{i}} \right \|_{L^{p}(\omega)}.
    \]
\end{remark}
\begin{lemma}[\cite{2020A}]\label{le2.3}
    Fix $q>1$. Suppose that $0<p<q$ and $\omega\in RH_{({\frac{q}{p}})^{\prime}}$. We are given countable collections of cubes $\{Q_{j}\}_{j=1}^{\infty}$, of non-negative numbers $\{\lambda_{j}\}_{j=1}^{\infty}$ and of non-negative measurable functions $\{a_{j}\}_{j=1}^{\infty}$ such that ${\rm supp}(a_{j})\subset Q_{j}$, $\left \|a_{j}\right \|_{L^{q}}\leq \vert Q_{j}\vert^{\frac{1}{q}}\omega(Q_{j})^{-\frac{1}{p}}$. Then
\[
\left \|\sum\limits_{j=1}^{\infty}\lambda_{j}a_{j}\right\|_{L_{\omega}^{p}}\leq C\left \|\sum\limits_{j=1}^{\infty}\frac{\lambda_{j}\chi_{Q_{j}}}{\omega(Q_{j})^{\frac{1}{p}}}\right \|_{L_{\omega}^{p}}.
\]
\end{lemma}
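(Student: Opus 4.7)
The plan is to prove the estimate by two successive applications of H\"older's inequality, combining the reverse H\"older condition on $\omega$ with the weighted Fefferman--Stein vector-valued maximal inequality to handle overlapping supports. Duality is deliberately avoided, since $p$ may be less than $1$.

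First I would reduce to non-negative $a_j$ and, for each $x\in\mathbb{R}^n$, decompose pointwise
\[
\lambda_j a_j(x)\chi_{Q_j}(x)=\bigl[\omega(Q_j)^{1/p}|a_j(x)|\bigr]\cdot\bigl[\lambda_j\omega(Q_j)^{-1/p}\chi_{Q_j}(x)\bigr].
\]
Applying H\"older in the discrete index $j$ with exponents $q$ and $q'$ yields a product of an $\ell^q$-sum of rescaled atoms and an $\ell^{q'}$-sum of rescaled coefficients. Raising the result to the $p$-th power and integrating against $\omega\,dx$, a second application of H\"older with exponents $q/p$ and $(q/p)'$ separates the two sums into two factors to be estimated independently.

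For the atom factor, I would use $\omega\in RH_{(q/p)'}$ in a per-atom estimate: H\"older with $q/p,(q/p)'$ combined with the size condition $\|a_j\|_{L^q}\le|Q_j|^{1/q}\omega(Q_j)^{-1/p}$ gives
\[
\int_{Q_j}|a_j|^p\omega\,dx\le\|a_j\|_{L^q}^p\cdot\Bigl(\int_{Q_j}\omega^{(q/p)'}\,dx\Bigr)^{1/(q/p)'}\le C,
\]
so that the atom factor is uniformly bounded, and the powers of $\omega(Q_j)$ arising from the rescaling cancel against this estimate after the bookkeeping.

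The main obstacle is the coefficient factor, which emerges as a weighted $L^s$-norm (with $s$ depending on $p$ and $q$) of an $\ell^{q'}$-sum of $(\lambda_j/\omega(Q_j)^{1/p})\chi_{Q_j}$, whereas the desired upper bound is a weighted $L^p$-norm of the $\ell^1$-sum. To bridge this, I would invoke the weighted Fefferman--Stein vector-valued maximal inequality (Lemma~\ref{le2.1}): choose $r>1$ with $\omega\in A_{rp}$, which is possible since $\omega\in A_\infty$, and use Remark~\ref{re2.2} to replace each $\chi_{Q_j}$ by a suitable power of $M\chi_{Q_j}$ and pass from the $\ell^{q'}$-sum to the desired $\ell^1$-sum, producing the right-hand side of the claim up to a universal constant. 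Verifying that the exponents introduced by the two H\"older applications are compatible with this maximal inequality is the subtlest bookkeeping of the proof, and constitutes the main technical hurdle.
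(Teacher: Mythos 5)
There is a genuine gap, and it occurs at the very first step. (For reference: the paper does not prove Lemma~\ref{le2.3} at all but cites it from \cite{2020A}; it is also an immediate consequence of Lemma~\ref{le2.8} applied to $g_k=\lambda_k a_k$.) Your discrete H\"older inequality in the index $j$ decouples the coefficients $\lambda_j$ from the atoms $a_j$: the pointwise bound it produces is
\[
\sum_j\lambda_j a_j(x)\le\Bigl(\sum_j\omega(Q_j)^{q/p}a_j(x)^q\Bigr)^{1/q}\Bigl(\sum_j\lambda_j^{q'}\omega(Q_j)^{-q'/p}\chi_{Q_j}(x)\Bigr)^{1/q'},
\]
and the first factor no longer contains the $\lambda_j$. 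Take all $Q_j$ equal to a fixed cube $Q$, all $a_j$ equal to a fixed normalized function $a$, and $\lambda_j=2^{-j}$: then both sides of the lemma are finite, but your first factor is $\bigl(\sum_j\omega(Q)^{q/p}a(x)^q\bigr)^{1/q}=+\infty$ wherever $a(x)>0$, so the intermediate inequality reads ``finite $\le\infty\cdot$ finite'' and is vacuous. The same defect survives integration: after your second H\"older the atom factor becomes $\bigl(\sum_j\omega(Q_j)^{q/p}\int a_j^q\,d\mu\bigr)^{p/q}$, a sum over \emph{all} $j$ of terms each comparable to $|Q_j|$ (or $\omega(Q_j)^{q/p}\int_{Q_j}a_j^q\omega$, which is not even controlled by $\|a_j\|_{L^q}$), hence divergent for any infinite family; ``uniformly bounded per atom'' does not give a bounded sum. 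In addition, the exponent $\tfrac{p}{q'}\cdot(\tfrac{q}{p})'$ appearing on the coefficient factor equals $p$ only when $p=1$, so the bookkeeping you flag as the main hurdle in fact does not close.

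What you do get right is the essential use of the reverse H\"older hypothesis: the per-atom estimate $\int_{Q_j}a_j^p\,\omega\,dx\le\|a_j\|_{L^q}^p\bigl(\int_{Q_j}\omega^{(q/p)'}\bigr)^{1/(q/p)'}\le C$, i.e.\ $\|a_j\|_{L^p_\omega}\lesssim1$, is correct and is exactly where $\omega\in RH_{(q/p)'}$ enters. The standard argument (as in \cite{2020A}) builds on this without any H\"older in $j$: for $p\le1$ one uses $\bigl(\sum_j\lambda_ja_j\bigr)^p\le\sum_j\lambda_j^pa_j^p$ together with the per-atom bound and the reverse inequality $\bigl(\sum_j\lambda_j\omega(Q_j)^{-1/p}\chi_{Q_j}\bigr)^p\ge\sum_j\lambda_j^p\omega(Q_j)^{-1}\chi_{Q_j}$ on the right-hand side; for $p>1$ one dualizes against $h\in L^{p'}(\omega)$ and runs a localized H\"older estimate on each $Q_j$ separately, so the coupling between $\lambda_j$ and $a_j$ is never lost. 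Your stated reason for avoiding duality (that $p$ may be less than $1$) is moot, since the sublinear case is precisely the easy one.
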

In order to obtain the atomic decomposition, we need a new reproducing formula. Thus we introduce test functions as follows.
\begin{definition}
    Let $\psi_0,\ \psi\in \mathcal{S}(\mathbb R^{n})$ satisfies
\begin{equation}
{\rm supp}\psi_0\subseteq\{x\in\mathbb R^{n}\colon\vert x\vert\leq1\};\ \int{\psi_0}=1,
\end{equation}
\begin{equation}
{\rm supp}\psi\subseteq\{x\in\mathbb R^{n}\colon\vert x\vert\leq1\};\ \int{\psi(x)}x^{\alpha}dx=0,\ {\rm for\ all}\ \vert\alpha\vert\leq M,
\end{equation}
and
\begin{equation}
\vert\widehat{\psi_0}(\xi)\vert^{2}+ \sum\limits_{j=1}^{\infty}\vert\widehat{\psi}(2^{-j}\xi)\vert^{2}=1,\ {\rm for\ all}\ \xi\in\mathbb R^{n},
\end{equation}
where a constant $M=M_{p,n}$ is large enough.
\end{definition}
\begin{lemma}\label{le2.5}
    Let $0<p<\infty,\ \omega\in A_{\infty}(\mathbb R^{n}),\ q_\omega=\inf\{q\colon\omega\in A_{q}\}$ and $q_\omega<q<\infty.$ Suppose that $\psi_0,\ \psi\in \mathcal{S}(\mathbb R^{n})$ satisfies (2.1)-(2.3). Then there exists a positive integer $N$ such that for any $f\in h_\omega^{p}(\mathbb R^{n})\cap L^{q}(\mathbb R^{n})$
\[
f(x)=\sum\limits_{j\in\mathbb N}\sum\limits_{Q\in\Pi_{j+N}}\vert Q\vert\psi_{j}(x-u_{Q})(\psi_{j}\ast h)(u_{Q})
\]
where $u_Q$ is any point in $Q$ and $h\in h_\omega^{p}(\mathbb R^{n})\cap L^{q}(\mathbb R^{n})$ satisfies
\[
\left \|h\right \|_{L^{q}(\mathbb R^{n})}\sim\left \|f\right \|_{L^{q}(\mathbb R^{n})},\ \left \|h\right \|_{h_{\omega}^{p}(\mathbb R^{n})}\sim\left \|f\right \|_{h_{\omega}^{p}(\mathbb R^{n})}.
\]
Moreover, the series converges in $L^{q}(\mathbb {R}^{n})$.
\end{lemma}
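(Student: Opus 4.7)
The plan is to realise the stated formula as the output of inverting a perturbation of the identity. First, I recall that condition (2.3) together with Plancherel yields the continuous Calder\'on reproducing formula
\[
f(x) = \sum_{j \in \mathbb N} \psi_j \ast \psi_j \ast f(x) = \sum_{j \in \mathbb N} \int_{\mathbb R^n} \psi_j(x-y)\,(\psi_j \ast f)(y)\,dy,
\]
where convergence holds in $L^q(\mathbb R^n)$ by standard Littlewood--Paley theory (valid since $q_\omega < q < \infty$). Partitioning each integral as a Riemann sum over the dyadic cubes in $\Pi_{j+N}$ defines the operator
\[
T_N f(x) := \sum_{j \in \mathbb N} \sum_{Q \in \Pi_{j+N}} |Q|\,\psi_j(x - u_Q)\,(\psi_j \ast f)(u_Q),
\]
and setting $R_N := I - T_N$, the task reduces to showing $\|R_N\|_{L^q \to L^q} + \|R_N\|_{h_\omega^p \to h_\omega^p} \to 0$ as $N \to \infty$. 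Once this is in hand, a Neumann series produces a bounded inverse $T_N^{-1}$ on both spaces simultaneously, and putting $h := T_N^{-1} f$ yields $f = T_N h$, which is exactly the claimed identity, with the equivalences $\|h\|_{L^q} \sim \|f\|_{L^q}$ and $\|h\|_{h_\omega^p} \sim \|f\|_{h_\omega^p}$ forced by the boundedness of $T_N$ and $T_N^{-1}$.

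Next I would estimate $R_N f$ pointwise by writing
\[
R_N f(x) = \sum_{j \in \mathbb N} \sum_{Q \in \Pi_{j+N}} \int_Q \Bigl[\psi_j(x-y)(\psi_j \ast f)(y) - \psi_j(x-u_Q)(\psi_j \ast f)(u_Q)\Bigr]dy
\]
and splitting the integrand into its two natural differences: one controlled by the smoothness of $\psi_j$ at scale $2^{-j}$ on the cube $Q$ of side $2^{-(j+N)}$, yielding $|\psi_j(x-y) - \psi_j(x-u_Q)| \lesssim 2^{-N}\,2^{jn}(1 + 2^{j}|x-u_Q|)^{-L}$ for any $L$ via the Schwartz decay of $\psi$, and the other controlled by a comparable smoothness estimate for $\psi_j \ast f$ which uses the cancellation condition (2.2) of $\psi$. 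Both factors produce a common gain of $2^{-N}$. Substituting back and applying the weighted Fefferman--Stein vector-valued maximal inequality of Lemma \ref{le2.1} (with auxiliary index chosen so that $\omega$ lies in the allowable $A_r$-class, exactly as in Remark \ref{re2.2}), I obtain $\|R_N f\|_{h_\omega^p} \lesssim 2^{-N\epsilon}\|f\|_{h_\omega^p}$ and $\|R_N f\|_{L^q} \lesssim 2^{-N\epsilon}\|f\|_{L^q}$ for some $\epsilon > 0$ depending on the smoothness and cancellation of $\psi$. Choosing $N$ so large that the constants on the right drop below $1/2$ makes $T_N$ invertible on both $L^q(\mathbb R^n)$ and $h_\omega^p(\mathbb R^n)$.

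The main obstacle is the $h_\omega^p$-estimate. Whereas the $L^q$-bound follows from a Schur-type kernel estimate combined with the $L^q$-boundedness of the Hardy--Littlewood maximal function (which applies because $q > q_\omega$), the $h_\omega^p$-bound forces one to control the discrete Littlewood--Paley square function $g_d(R_N f)$ in $L_\omega^p$, which requires an almost-orthogonality argument between the scales of $\psi_k$ (defining the square function) and the scales of $\psi_j$ (appearing in $R_N$). The standard estimate $|\psi_k \ast \psi_j(x)| \lesssim 2^{-|j-k|M}(1 + 2^{j\wedge k}|x|)^{-L}$, coupled with Lemma \ref{le2.1} and the $r$-th power device of Remark \ref{re2.2} to overcome the failure of Fefferman--Stein for $p \leq 1$, is what closes this loop. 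Finally, the $L^q$-convergence of $\sum_{j}\sum_{Q} |Q|\psi_j(x-u_Q)(\psi_j \ast h)(u_Q)$ to $f$ follows from the $L^q$-convergence of the continuous Calder\'on formula applied to $h$ together with the smallness of $R_N h$ in $L^q$.
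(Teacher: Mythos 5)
Your proposal follows essentially the same route as the paper: discretize the continuous Calder\'on identity into $T_N$, show $\|R_N\|=\|I-T_N\|\lesssim 2^{-N}$ on both $L^q$ and $h_\omega^p$ by almost-orthogonality plus the weighted Fefferman--Stein inequality, invert by Neumann series, and set $h=T_N^{-1}f$. The only noticeable difference is at the very end: the paper justifies the $L^q$-convergence of the double series by a duality argument that bounds the tail by the $L^q$-norm of the square function restricted to the tail cubes (which vanishes by dominated convergence), which is a more direct handling of the double-sum convergence than your appeal to the continuous formula plus smallness of $R_N h$, but this is a minor point of presentation rather than a gap.
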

\begin{proof}
Applying the Calder\'on reproducing formula on  $L^{2}$ and the Coifman's decomposition, we have that
\[
\begin{aligned}
    f(x)
    &=\sum\limits_{j\in\mathbb N}\psi_{j}\ast\psi_{j}\ast f(x)\\
    &=\sum\limits_{j\in\mathbb N}\sum\limits_{Q\in\Pi_{j+N}}\int_{Q}\psi_{j}(x-u)(\psi_{j}\ast f)(u)du\\
    &=:T_{N}(f)(x)+R_{n}(f)(x)
\end{aligned}
\]
where
\[
T_{N}(f)(x)=\sum\limits_{j\in\mathbb N}\sum\limits_{Q\in\Pi_{j+N}}\vert Q\vert\psi_{j}(x-u_{Q})(\psi_{j}\ast f)(u_{Q}),
\]
\[
R_{N}(f)(x)=\sum\limits_{j\in\mathbb N}\sum\limits_{Q\in\Pi_{j+N}}\int_{Q}[\psi_{j}(x-u)(\psi_{j}\ast f)(u)-\psi_{j}(x-u_{Q})(\psi_{j}\ast f)(u_{Q})]du,
\]
where some larger integer $N$ will be chosen later and $u_{Q}$ is any point in $Q$.\par
Details are similar to those in \cite{han1994calderon,Li2021,wu2012atomic}. 
By a standard almost orthogonality estimation, we can prove that
\[
\left \|R_{N}(f)\right \|_{h_{\omega}^{p}(\mathbb R^{n})}\leq C2^{-N}\left \|f\right \|_{h_{\omega}^{p}(\mathbb R^{n})}
\]
and
\[
\left \|R_{N}(f)\right \|_{L^{q}(\mathbb R^{n})}\leq C2^{-N}\left \|f\right \|_{L^{q}(\mathbb R^{n})}.
\]\par
We can choose $N$ large enough so that $C2^{-N}<1$. Since $I=T_{N}+R_{N}$ and $R_{N}$ is bounded on $h_{\omega}^{p}(\mathbb R^{n})$ and $L^{q}(\mathbb R^{n})$, then $T_{N}$ and $T_{N}^{-1}$ are bounded on $h_{\omega}^{p}(\mathbb R^{n})$ and $L^{2}(\mathbb R^{n})$. Moreover, $T_{N}^{-1}=\sum\limits_{n=0}^{\infty}(R_{N})^{n}$. Let $h(x)=T_{N}^{-1}(f)(x)$ and then
\[
\left \|h\right \|_{h_{\omega}^{p}(\mathbb R^{n})}\sim\left \|f\right \|_{h_{\omega}^{p}(\mathbb R^{n})},\ \left \|h\right \|_{L^{q}(\mathbb R^{n})}\sim\left \|f\right \|_{L^{q}(\mathbb R^{n})}.
\]
Furthermore,
\[
f(x)=T_{N}(T_{N}^{-1}(f))(x)=\sum\limits_{j\in\mathbb N}\sum\limits_{Q\in\Pi_{j+N}}\vert Q\vert\psi_{j}(x-u_{Q})(\psi_{j}\ast h)(u_{Q}).
\]
where the series converges in $L^{2}$.\par
Next we will prove that the series above converges in $L^{q}$ for any $1<q<\infty$. Since $L^{q}\cap L^{2}$ is dense in $L^{q}$, it suffices to show that the series converges in $L^{q}$ for any function $f\in L^{q}\cap L^{2}$.\\Let
\[
B_{l}=\{Q\colon l(Q)=2^{-j-N},\ Q\subset B(0,l),\ \vert j\vert\leq l\},
\]
where $B(0,l)$ are balls centered at origin with radii $l$ in $\mathbb {R}^{n}$. Write $\psi_{Q}=\psi_{j}$. We claim that for each function $f\in L^{q}\cap L^{2}$
\[
\left \|\sum\limits_{l>L}\sum\limits_{Q\in B_{l}}\vert Q\vert\psi_{Q}(x-u_{Q})(\psi_{Q}\ast h)(u_{Q})\right \|_{L^{q}}\to 0,\ as\ L\to+\infty.
\]
In fact, by duality argument, we have that
\[
\begin{aligned}
        &\left \|\sum\limits_{l>L}\sum\limits_{Q\in B_{l}}\vert Q\vert\psi_{Q}(x-u_{Q})(\psi_{Q}\ast h)(u_{Q})\right \|_{L^{q}}\\
        &=\sup\limits_{\left \|g\right \|_{L^{q^{\prime}}}\leq1}\left \langle \sum\limits_{l>L}\sum\limits_{Q\in B_{l}}\vert Q\vert\psi_{Q}(x-u_{Q})(\psi_{Q}\ast h)(u_{Q}),g \right \rangle\\
        &=\sup\limits_{\left \|g\right \|_{L^{q^{'}}}\leq1}\left\vert\sum\limits_{l>L}\sum\limits_{Q\in B_{l}}\vert Q\vert\psi_{Q}\ast h(u_{Q})\psi_{Q}\ast g(u_{Q})\right\vert\\
        &\leq\sup\limits_{\left \|g\right \|_{L^{q^{'}}}\leq1}\left\vert\int_{\mathbb R^{n}}\sum\limits_{l>L}\sum\limits_{Q\in B_{l}}(\psi_{Q}\ast h)(u_Q)(\psi_{Q}\ast g)(u_{Q})\chi_{Q}(y)dy\right\vert\\
        &\leq\sup\limits_{\left \|g\right \|_{L^{q^{\prime}}}\leq1}\int_{\mathbb R^{n}}\left\{\sum\limits_{l>L}\sum\limits_{Q\in B_{l}}\vert(\psi_{Q}\ast h)(u_Q)\vert^2\chi_{Q}(y)\right\}^{\frac{1}{2}}\\
        &\quad\times\left\{\sum\limits_{l>L}\sum\limits_{Q\in B_{l}}\vert(\psi_{Q}\ast g)(u_Q)\vert^2\chi_{Q}(y)\right\}^{\frac{1}{2}}dy\\
        &\leq\sup\limits_{\left \|g\right \|_{L^{q^{\prime}}}\leq1}\left \|\left\{\sum\limits_{l>L}\sum\limits_{Q\in B_{l}}\vert(\psi_{Q}\ast g)(u_Q)\vert^2\chi_{Q}(y)\right\}^{\frac{1}{2}}\right \|_{L^{q^{\prime}}}\\
        &\quad\times\left \|\left\{\sum\limits_{l>L}\sum\limits_{Q\in B_{l}}\vert(\psi_{Q}\ast h)(u_Q)\vert^2\chi_{Q}(y)\right\}^{\frac{1}{2}}\right \|_{L^{q}}\\
        &\leq C\left \|\left\{\sum\limits_{l>L}\sum\limits_{Q\in B_{l}}\vert(\psi_{Q}\ast h)(u_Q)\vert^2\chi_{Q}(y)\right\}^{\frac{1}{2}}\right \|_{L^{q}}
\end{aligned}
\]
which tends to zero as $L$ goes to infinity. Then by a standard density argument, we can obtain the desired result.
\end{proof}
By Lemma~{\upshape\ref{le2.5}}, we can obtain the following corollary.
\begin{corollary}\label{co2.6}
Let $0<p<\infty,\ \omega\in A_{\infty}(\mathbb R^{n}),\ q_\omega<q<\infty.$ Suppose that $\psi_0,\ \psi\in \mathcal{S}(\mathbb R^{n})$ satisfies (2.1)-(2.3). Then for any $f\in h_\omega^{p}(\mathbb R^{n})\cap L^{q}(\mathbb R^{n}),$
\[
\left \|f\right \|_{h_{\omega}^{p}}\sim\left \|\left\{\sum\limits_{j\in\mathbb N}\sum\limits_{Q\in\Pi_{j+N}}\sup_{u\in Q}\vert\psi_{j}\ast f(u)\vert^2\chi_{Q}(x)\right\}^{1/2}\right \|_{L_{\omega}^{p}}
\]
\end{corollary}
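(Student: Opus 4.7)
The plan is to establish both directions of the equivalence separately, in each case reducing to Lemma~\ref{le2.5}. For the inequality $\|f\|_{h_\omega^p}\lesssim(\text{right-hand side})$, note that the Calder\'on reproducing argument underlying the proof of Lemma~\ref{le2.5}---specifically the bound $\|R_N(f)\|_{h_\omega^p}\lesssim 2^{-N}\|f\|_{h_\omega^p}$ together with the invertibility of $T_N$---yields, as a direct by-product, the equivalence
\[
\|f\|_{h_\omega^p}\sim\left\|\left(\sum_{j\in\mathbb N}\sum_{Q\in\Pi_{j+N}}|\psi_j*f(x_Q)|^2\chi_Q\right)^{1/2}\right\|_{L_\omega^p}
\]
for the admissible system $\psi_0,\psi$ evaluated at the corners $x_Q$. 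The trivial pointwise majorization $|\psi_j*f(x_Q)|\leq\sup_{u\in Q}|\psi_j*f(u)|$ then furnishes the required bound.

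For the reverse inequality I would apply Lemma~\ref{le2.5} to produce $h\in h_\omega^p\cap L^q$ with $\|h\|_{h_\omega^p}\sim\|f\|_{h_\omega^p}$ and $f(y)=\sum_{k,Q'\in\Pi_{k+N}}|Q'|\psi_k(y-u_{Q'})(\psi_k*h)(u_{Q'})$. Convolving with $\psi_j$ and evaluating at $u\in Q\in\Pi_{j+N}$ gives
\[
\psi_j*f(u)=\sum_{k\in\mathbb N}\sum_{Q'\in\Pi_{k+N}}|Q'|(\psi_j*\psi_k)(u-u_{Q'})(\psi_k*h)(u_{Q'}).
\]
The vanishing moments and compact support of $\psi_0,\psi$ from (2.1)--(2.2) yield the standard almost-orthogonality estimate
\[
|\psi_j*\psi_k(z)|\lesssim 2^{-|j-k|M}\frac{2^{n(j\wedge k)}}{(1+2^{j\wedge k}|z|)^L}
\]
for preassigned exponents $M$ and $L$. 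Fix $r\in(0,\min\{2,p/q_\omega\})$ so that $\omega\in A_{p/r}$, and take $L$ so large that $Lr>n$. A routine Plancherel--P\'olya majorization then controls the convolution sum by the Hardy--Littlewood maximal operator $\mathcal M$ applied to $\sum_{Q'\in\Pi_{k+N}}|(\psi_k*h)(u_{Q'})|^r\chi_{Q'}$, giving
\[
\sup_{u\in Q}|\psi_j*f(u)|\lesssim\sum_{k\in\mathbb N}2^{-|j-k|M'}\left[\mathcal M\!\left(\sum_{Q'\in\Pi_{k+N}}|(\psi_k*h)(u_{Q'})|^r\chi_{Q'}\right)(x)\right]^{1/r}
\]
for every $x\in Q$, with $M'$ arbitrarily large. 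Squaring, summing in $j$, applying Cauchy--Schwarz in $k$ to absorb the geometric factor, and invoking the weighted Fefferman--Stein inequality Lemma~\ref{le2.1} at exponents $p/r$ and $2/r$ bounds the right-hand side of the corollary by $\|g_d(h)\|_{L_\omega^p}\sim\|h\|_{h_\omega^p}\sim\|f\|_{h_\omega^p}$, using the corner-point square-function characterization for $h$ that was obtained in the first paragraph.

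The main obstacle lies in calibrating the parameters $r,M,L$: one needs $r<p/q_\omega$ for Lemma~\ref{le2.1} to apply, $Lr>n$ for the maximal majorization, and $M$ sufficiently large that---after absorbing the factor $2^{|j-k|n/r}$ incurred when trading $\ell^\infty$ for $\ell^r$ in the localization step---the residual decay exponent $M'$ in $|j-k|$ still yields geometric summability. These constraints are simultaneously satisfiable because $\omega\in A_\infty$ affords an open range of admissible $r$ and the vanishing-moment order of $\psi$ can be chosen arbitrarily large.
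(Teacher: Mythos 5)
Your overall strategy matches the paper's: the inequality $\|f\|_{h_\omega^p}\lesssim\|\{\sum_j\sum_Q\sup_{u\in Q}|\psi_j*f(u)|^2\chi_Q\}^{1/2}\|_{L_\omega^p}$ is extracted from the $T_N$, $R_N$ machinery of Lemma~\ref{le2.5} exactly as in the paper (which in fact records the stronger $\inf_{u\in Q}$ version, since $u_Q$ is an arbitrary point of $Q$), and the reverse inequality is a Plancherel--P\'olya estimate via almost orthogonality and the weighted Fefferman--Stein inequality, which is also what the paper invokes (by reference to \cite{ding2019discrete}). Your calibration of $r$, $L$, $M$ is the correct bookkeeping for that estimate.

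There is, however, a genuine circularity in your hard direction. Your first paragraph asserts the \emph{two-sided} equivalence $\|f\|_{h_\omega^p}\sim\|(\sum_j\sum_Q|\psi_j*f(x_Q)|^2\chi_Q)^{1/2}\|_{L_\omega^p}$ as ``a direct by-product'' of $\|R_N(f)\|_{h_\omega^p}\lesssim 2^{-N}\|f\|_{h_\omega^p}$ and the invertibility of $T_N$. Those facts only give one direction, namely $\|f\|_{h_\omega^p}\lesssim\|T_N(f)\|_{h_\omega^p}\lesssim\|(\cdots)^{1/2}\|_{L_\omega^p}$; the converse ``discrete data $\lesssim$ norm'' is itself a Plancherel--P\'olya estimate of exactly the type the corollary asserts and does not follow from operator invertibility. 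Yet your second paragraph terminates by bounding the sup-square function of $f$ by $\|(\sum_k\sum_{Q'}|(\psi_k*h)(u_{Q'})|^2\chi_{Q'})^{1/2}\|_{L_\omega^p}$ and then invoking precisely that unproven converse for $h$ to conclude $\lesssim\|h\|_{h_\omega^p}$; as written, you assume for $h$ what you are proving for $f$. The standard repair is to run your almost-orthogonality expansion not against the discrete formula $f=T_N(h)$ but against the continuous Calder\'on identity $f=\sum_k\phi_k*\phi_k*f$ from (1.4): then $\psi_j*f=\sum_k(\psi_j*\phi_k)*(\phi_k*f)$, the same majorization gives $\sup_{u\in Q}|\psi_j*f(u)|\lesssim\sum_k 2^{-|j-k|M'}\bigl[\mathcal{M}(|\phi_k*f|^r)(x)\bigr]^{1/r}$ for $x\in Q$, and Lemma~\ref{le2.1} followed by Lemma~\ref{le2.7} yields the bound by $\|g(f)\|_{L_\omega^p}\sim\|f\|_{h_\omega^p}$ with no reference to $h$ at all. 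With that substitution your argument closes; note that the paper itself disposes of this direction only with ``repeating the same process'' and a citation, so your write-up, once corrected, is actually more complete than the original.
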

\begin{proof}
     From the above proof, we know that 
\[
\begin{aligned}
    \left \|T_{N}(f)\right \|_{h_{\omega}^{p}}
    &=\left \|\sum\limits_{j\in\mathbb N}\sum\limits_{Q\in\Pi_{j+N}}\vert Q\vert\psi_{j}(x-u_{Q})(\psi_{j}\ast f)(u_{Q})\right \|_{h_{\omega}^{p}}\\
    &\leq C\left \|\left\{\sum\limits_{j\in\mathbb N}\sum\limits_{Q\in\Pi_{j+N}}\vert(\psi_{j}\ast f)(u_{Q})\vert^{2}\chi_{Q}\right\}^{\frac{1}{2}}\right \|_{L_{\omega}^{p}}.
\end{aligned}
\]
Hence, for any $f\in L_\omega^{q}(\mathbb R^{n})\cap h_\omega^{p}(\mathbb R^{n})$, we can obtain that
\[
\begin{aligned}
\left \|f\right \|_{h_{\omega}^{p}}
&=\left \|T_{N}^{-1}\circ T_{N}(f)\right \|_{h_{\omega}^{p}}\\
&\leq C\left \|T_{N}(f)\right \|_{h_{\omega}^{p}}\\
&\leq C\left \|\left\{\sum\limits_{j\in\mathbb N}\sum\limits_{Q\in\Pi_{j+N}}\vert(\psi_{j}\ast f)(u_{Q})\vert^{2}\chi_{Q}\right\}^{\frac{1}{2}}\right \|_{L_{\omega}^{p}}
\end{aligned}
\]
which implies that
\[
\left \|f\right \|_{h_{\omega}^{p}}\leq C\left \|\left\{\sum\limits_{j\in\mathbb N}\sum\limits_{Q\in\Pi_{j+N}}\inf\limits_{u\in Q}\vert(\psi_{j}\ast f)(u)\vert^{2}\chi_{Q}\right\}^{\frac{1}{2}}\right \|_{L_{\omega}^{p}}
\]
Then, repeating the same process, we can obtain that
\[
\left \|\left\{\sum\limits_{j\in\mathbb N}\sum\limits_{Q\in\Pi_{j+N}}\sup\limits_{u\in Q}\vert(\psi_{j}\ast f)(u)\vert^{2}\chi_{Q}\right\}^{\frac{1}{2}}\right \|_{L_{\omega}^{p}}\leq C\left \|f\right \|_{h_{\omega}^{p}}
\]
Details are similar to those in \cite{ding2019discrete}. Furthermore, we have that
\[
\begin{aligned}
    \left \|f\right \|_{h_{\omega}^{p}}
    &\approx\left \|\left\{\sum\limits_{j\in\mathbb N}\sum\limits_{Q\in\Pi_{j+N}}\sup\limits_{u\in Q}\vert(\psi_{j}\ast f)(u)\vert^{2}\chi_{Q}\right\}^{\frac{1}{2}}\right \|_{L_{\omega}^{p}}\\
\end{aligned}
\]
\end{proof}
Then we give the following lemma which is need for the proof of Theorem~{\upshape\ref{th1.8}}. The proof of the lemma is similar to but easier than those in \cite{ding2012boundedness,tan2019atomic}.\par
\begin{lemma}\label{le2.7}
    Let $0<p<\infty$, $\omega\in A_{\infty}(\mathbb {R}^{n})$. Then for any $f\in \mathcal{S}^{\prime}(\mathbb {R}^{n})$
    \[
    \left \| f \right \|_{h_{\omega}^{p}}\sim \left \| g(f) \right \|_{L_{\omega}^{p}}.
    \]
\end{lemma}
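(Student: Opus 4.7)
Since $\|f\|_{h^p_\omega}=\|g_d(f)\|_{L^p_\omega}$ by Definition~1.2, the plan is to prove the equivalent statement $\|g_d(f)\|_{L^p_\omega}\sim\|g(f)\|_{L^p_\omega}$. The strategy is to pass between the discrete and continuous Littlewood--Paley square functions via the Peetre-type maximal function
\[
\phi_j^*(f)(x)=\sup_{y\in\mathbb R^n}\frac{|\phi_j\ast f(y)|}{(1+2^j|x-y|)^a}
\]
together with the weighted Fefferman--Stein vector-valued maximal inequality (Lemma~\ref{le2.1}). Two standard pointwise facts will carry most of the argument: (i) $[\phi_j^*(f)(x)]^r\le C\,M(|\phi_j\ast f|^r)(x)$ whenever $ar>n$; and (ii) for $x\in Q\in\Pi_j$, $|\phi_j\ast f(x_Q)|\le C\,\phi_j^*(f)(x)$, since $|x-x_Q|\le\sqrt n\,2^{-j}$ makes the denominator in the Peetre supremum $\le(1+\sqrt n)^a$.

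For the direction $\|g_d(f)\|_{L^p_\omega}\lesssim\|g(f)\|_{L^p_\omega}$, fact (ii) immediately yields $g_d(f)(x)^2\le C\sum_j[\phi_j^*(f)(x)]^2$, because for each $j$ exactly one cube $Q_j(x)\in\Pi_j$ contains $x$. I would then fix $0<r<\min\{2,p/q_\omega\}$, so that $p/r>q_\omega$ (hence $\omega\in A_{p/r}$) and $2/r>1$, and choose $a>n/r$. Combining with (i) gives
\[
\|g_d(f)\|_{L^p_\omega}^p\le C\int\biggl(\sum_j\bigl[M(|\phi_j\ast f|^r)(x)\bigr]^{2/r}\biggr)^{p/2}\omega(x)\,dx,
\]
which is exactly the $(p/r)$-th power of the $L^{p/r}_\omega(l^{2/r})$-norm of $\{M(|\phi_j\ast f|^r)\}_j$. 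Applying Lemma~\ref{le2.1} reduces this to the same norm of $\{|\phi_j\ast f|^r\}_j$, which equals $\|g(f)\|_{L^p_\omega}^p$.

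For the reverse direction, I would insert the discrete Calder\'on reproducing formula from Section~1 into $\phi_i\ast f$, giving
\[
\phi_i\ast f(x)=\sum_{j\in\mathbb N}\sum_{Q\in\Pi_j}|Q|\,\phi_j\ast f(x_Q)\,(\phi_i\ast\phi_j)(x-x_Q),
\]
and estimate each convolution by the almost orthogonality bound $|(\phi_i\ast\phi_j)(x-x_Q)|\le C\,2^{-|i-j|K}\,2^{(i\wedge j)n}(1+2^{i\wedge j}|x-x_Q|)^{-L}$ for arbitrarily large $K,L$. A Frazier--Jawerth-type summation lemma then bounds the inner $Q$-sum by a constant (absorbable into the $2^{-|i-j|K}$ factor once $K$ is taken large) times $[M(G_j)(x)]^{1/r}$, where $G_j=\sum_{Q\in\Pi_j}|\phi_j\ast f(x_Q)|^r\chi_Q$ and $r$ is the small parameter fixed above. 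This produces $|\phi_i\ast f(x)|\le C\sum_j 2^{-|i-j|K'}[M(G_j)(x)]^{1/r}$ with some $K'>0$; Minkowski in $l^2(\mathbb N)$ together with the summability of $\{2^{-|i-j|K'}\}_i$ then yields $g(f)(x)^2\le C\sum_j[M(G_j)(x)]^{2/r}$. A second application of Lemma~\ref{le2.1} with parameters $p/r$ and $2/r$, followed by the identity $G_j(x)^{2/r}=|\phi_j\ast f(x_{Q_j(x)})|^2$ arising from the disjointness of cubes in $\Pi_j$, reconstructs $g_d(f)^2$ and closes the estimate.

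The main obstacle will be the reverse direction: verifying the almost orthogonality decay of $\phi_i\ast\phi_j$ with its correct scale-dependent geometry and then executing the Frazier--Jawerth summation so that the parameters $r$, $K$, $L$ remain simultaneously compatible with the constraint $\omega\in A_{p/r}$ coming from Lemma~\ref{le2.1}. The forward inequality, by contrast, is essentially automatic from the Peetre control (ii) plus a single invocation of the weighted vector-valued maximal theorem.
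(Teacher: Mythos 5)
Your proposal is correct and is essentially the proof the paper intends: the paper does not write out Lemma~2.7, instead noting its proof is ``similar to but easier than'' those in the cited references, which are precisely this argument --- reduce to $\|g_d(f)\|_{L^p_\omega}\sim\|g(f)\|_{L^p_\omega}$, control point evaluations by the Peetre maximal function $\phi_j^*f$ with $[\phi_j^*(f)]^r\lesssim M(|\phi_j\ast f|^r)$, insert the discrete Calder\'on identity with almost-orthogonality for the reverse bound, and close both directions with the weighted Fefferman--Stein inequality (Lemma~\ref{le2.1}) for $r<\min\{2,p/q_\omega\}$. Two cosmetic points: with the paper's Fourier-support assumptions $\phi_i\ast\phi_j\equiv 0$ for $|i-j|\geq 2$, so your almost-orthogonality step is trivial here, and for the Frazier--Jawerth summation you should also take $r\leq 1$, which is harmless since shrinking $r$ only strengthens $\omega\in A_{p/r}$ and keeps $2/r>1$.
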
\par
We also recall the following key lemmas which are need for the proof of Section~{\upshape\ref{sec4}}. For more details, see \cite{2020A}.
\begin{lemma}\label{le2.8}
Fix $q>1$. If $0<p<q$ and $\omega\in RH_{(\frac{q}{p})^{\prime}}$, then for all sequences of cubes $\{Q_{k}\}$ and non-negative functions $\{g_{k}\}$ such that ${\rm supp}(g_{k})\subset Q_{k}$,
\[
\left \| \sum\limits_{k}g_{k} \right \|_{L^{p}(\omega)}\lesssim\left \| \sum\limits_{k}\left(\frac{1}{\vert Q_{k}\vert}\int_{Q_{k}}g_{k}^{q}dy\right)^{\frac{1}{q}}\chi_{Q_{k}} \right \|_{L^{p}(\omega)}.
\]
\end{lemma}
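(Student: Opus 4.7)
The proof should split naturally into a single-cube Hölder estimate followed by a global summation step; the latter is where the hypothesis $\omega\in RH_{(q/p)'}$ is decisively used.

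For the single-cube piece I would apply Hölder's inequality on $Q_k$ with the conjugate exponents $q/p$ and $(q/p)'$ (which are available because $0<p<q$). This gives
\[
\int_{Q_k} g_k^p\,\omega\,dx \leq \left(\int_{Q_k} g_k^q\,dx\right)^{p/q}\left(\int_{Q_k}\omega^{(q/p)'}\,dx\right)^{1/(q/p)'}.
\]
The reverse Hölder condition controls the $\omega$-factor by $C\,|Q_k|^{-p/q}\,\omega(Q_k)$, so that, writing $G_k := \bigl(\tfrac{1}{|Q_k|}\int g_k^q\bigr)^{1/q}$, one obtains the single-cube bound
\[
\int_{Q_k} g_k^p\,\omega\,dx \leq C\,G_k^p\,\omega(Q_k) = C\,\|G_k\chi_{Q_k}\|_{L^p(\omega)}^p.
\]

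For the summation I would proceed by duality when $p\geq 1$. Against $h\geq 0$ with $\|h\|_{L^{p'}(\omega)}\leq 1$, Hölder on each $Q_k$ with $(q,q')$, combined with the pointwise bound $\bigl(\tfrac{1}{|Q_k|}\int_{Q_k}(h\omega)^{q'}\bigr)^{1/q'}\leq \inf_{Q_k}M_{q'}(h\omega)$, produces
\[
\int_{Q_k} g_k\,h\,\omega\,dx \leq G_k\int_{Q_k} M_{q'}(h\omega)\,dx,
\]
and summing gives $\sum_k\int g_k h\omega \leq \int H\cdot M_{q'}(h\omega)\,dx$, where $H:=\sum_k G_k\chi_{Q_k}$. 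One then closes with Hölder in $L^p(\omega)$ paired with the weighted boundedness of $M_{q'}$, which is precisely what the reverse Hölder class of $\omega$ affords. For $0<p<1$ the duality step is unavailable, and I would instead perform a Calderón--Zygmund-type stopping-time decomposition that selects a disjoint sub-family of top cubes on which the single-cube estimates add up correctly, or alternatively invoke a reverse-Hölder-class extrapolation transferring the argument from the accessible endpoint.

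\textbf{Main obstacle.} The crux lies in the summation: the naive inequality $\sum_k\int g_k^p\omega \leq C\sum_k G_k^p\omega(Q_k)$ produced by the single-cube estimate is generally \emph{larger} than the target $\|\sum_k G_k\chi_{Q_k}\|_{L^p(\omega)}^p$ when cubes overlap, and for $p\leq 1$ the trivial $p$-subadditivity $(\sum a_k)^p\leq \sum a_k^p$ moves in the wrong direction. Extracting the correct bound requires exploiting the reverse Hölder hypothesis beyond the single-cube estimate, either via the weighted maximal inequality in the duality argument or via a disjointification/extrapolation step for $p<1$; this is the step where the geometry of the overlapping cubes has to be reconciled with the weight.
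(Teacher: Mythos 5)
Your single-cube estimate is correct, and it is indeed the point where the reverse H\"older hypothesis enters: H\"older with exponents $q/p$ and $(q/p)'$ plus $\omega\in RH_{(q/p)'}$ give $\int_{Q_k}g_k^p\omega\,dx\le C\,G_k^p\,\omega(Q_k)$. Be aware, though, that the paper does not prove this lemma at all — it is quoted from \cite{2020A} — so there is no internal argument to compare with, and your attempt has to stand on its own. As written, it does not.

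The summation step, which you rightly single out as the crux, is where the proposal fails. In the duality branch ($p>1$) your final H\"older step reads $\int H\,M_{q'}(h\omega)\,dx\le\|H\|_{L^p(\omega)}\,\|M_{q'}(h\omega)\|_{L^{p'}(\omega^{1-p'})}$, so closing the argument requires $M_{q'}$ to be bounded on the dual space $L^{p'}(\omega^{1-p'})$, i.e. $\omega^{1-p'}\in A_{p'/q'}$. This is \emph{not} what the reverse H\"older class affords: it is an $A_p$-type condition, independent of the $RH$ hypothesis, and it can fail under the assumptions of the lemma. For example, with $n=1$, $p=2$, $q=4$ and $\omega(x)=|x|^{5}$ one has $\omega\in RH_{2}=RH_{(q/p)'}$, yet $\omega^{1-p'}=|x|^{-5}$ is not even locally integrable, so no weighted maximal inequality is available. (Note also that your duality branch never uses $\omega\in RH_{(q/p)'}$ anywhere — a warning sign that it cannot be the intended mechanism.) For $0<p\le1$, which is precisely the range needed for the Hardy-space applications in Section 4, you offer only a gesture toward a stopping-time selection or an unspecified extrapolation; neither is carried out, and, as you yourself observe, the bound $\sum_kG_k^p\omega(Q_k)$ produced by the single-cube estimate sits on the wrong side of $\bigl\|\sum_kG_k\chi_{Q_k}\bigr\|_{L^p(\omega)}^p$ once the cubes overlap. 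So the proposal correctly diagnoses the obstacle but overcomes it in neither range of $p$; you should consult the actual proof in \cite{2020A}.
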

\begin{lemma}\label{le2.9}
    Suppose $0<\alpha<n$, $0<p<\frac{n}{\alpha}$, and $\frac{1}{q}=\frac{1}{p}-\frac{\alpha}{n}$. If $\omega^{p}\in RH_{\frac{q}{p}}$, then for any countable collection of cubes $\{Q_{k}\}$ and $\lambda_{k}>0$,
    \[
    \left \| \sum\limits_{k}\lambda_{k}\vert Q_{k}\vert^{\frac{\alpha}{n}}\chi_{Q_{k}} \right \|_{L^{q}(\omega^{q})}\lesssim\left \| \sum\limits_{k}\lambda_{k}Q_{k} \right \|_{L^{p}(\omega^{p})}.
    \]
\end{lemma}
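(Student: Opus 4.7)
The plan is to exploit the reverse H\"older hypothesis $\omega^{p}\in RH_{q/p}$ together with the relation $1/q=1/p-\alpha/n$ to transfer the fractional scaling $|Q_{k}|^{\alpha/n}$ on the left into a change of weight, and then to pass to a weighted Hardy--Littlewood--Sobolev type estimate tailored to the special structure of $\sum_{k}\lambda_{k}\chi_{Q_{k}}$.

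The first key step is a single-cube inequality. Unwinding the definition of $RH_{q/p}$ and using $p/q-1=-\alpha p/n$ gives, for every cube $Q$,
$$|Q|^{\alpha/n}\,\omega^{q}(Q)^{1/q}\;\lesssim\;\omega^{p}(Q)^{1/p}. \qquad (*)$$
This is the ``atomic-scale'' version of the target inequality and is the mechanism that converts $L^{p}(\omega^{p})$ normalisation of $\chi_{Q}$ into $L^{q}(\omega^{q})$ normalisation of $|Q|^{\alpha/n}\chi_{Q}$.

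The second step is a pointwise domination by a Riesz potential. For $x\in Q_{k}$ the diameter bound $|x-y|\lesssim|Q_{k}|^{1/n}$ on $Q_{k}$ yields $|Q_{k}|^{\alpha/n}\chi_{Q_{k}}(x)\lesssim\int_{Q_{k}}|x-y|^{\alpha-n}\,dy=I_{\alpha}(\chi_{Q_{k}})(x)$, and summing over $k$ with $f:=\sum_{k}\lambda_{k}\chi_{Q_{k}}$ we obtain
$$\sum_{k}\lambda_{k}|Q_{k}|^{\alpha/n}\chi_{Q_{k}}(x)\;\lesssim\;I_{\alpha}f(x).$$
Hence the problem reduces to a weighted HLS-type bound $\|I_{\alpha}f\|_{L^{q}(\omega^{q})}\lesssim\|f\|_{L^{p}(\omega^{p})}$ for this specific $f$.

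To establish the reduced bound when $q>1$, I would dualise against $g\geq 0$ with $\|g\|_{L^{q'}(\omega^{q})}=1$, distribute the pairing across the cubes $Q_{k}$, apply H\"older on each $Q_{k}$ to factor out $\omega^{q}(Q_{k})^{1/q}$, and substitute $(*)$; this reduces matters to the bilinear sum $\sum_{k}\lambda_{k}\omega^{p}(Q_{k})^{1/p}\|g\chi_{Q_{k}}\|_{L^{q'}(\omega^{q})}$, which an $\ell^{\sigma}$ H\"older in $k$ combined with Lemma~\ref{le2.8} applied to $\omega^{p}$ collapses back to $\|\sum_{k}\lambda_{k}\chi_{Q_{k}}\|_{L^{p}(\omega^{p})}\|g\|_{L^{q'}(\omega^{q})}$. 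The complementary range $0<q\leq 1$ I would treat directly via the subadditivity $(\sum_{k}a_{k})^{q}\leq\sum_{k}a_{k}^{q}$, the single-cube version of $(*)$, and the elementary embedding $\ell^{q}\hookrightarrow\ell^{p}$ (valid since $p<q$). The main obstacle is this last step: for substantially overlapping cubes, naive term-by-term summation introduces a multiplicity factor, and the key subtlety is reconciling the reverse H\"older index demanded by Lemma~\ref{le2.8} ($\omega^{p}\in RH_{(q/p)'}$) with our hypothesis ($\omega^{p}\in RH_{q/p}$) via the self-improvement (openness) property of the $RH$-classes, which is what keeps the overlap harmless.
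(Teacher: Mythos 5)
First, a remark on context: the paper does not prove Lemma~\ref{le2.9} at all --- it is quoted from \cite{2020A} --- so there is no in-paper argument to compare against; your proposal has to stand on its own.

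Your single-cube estimate $(*)$ is correct and is indeed the natural starting point: $\omega^{p}\in RH_{q/p}$ gives $\omega^{q}(Q)^{p/q}\leq C\,\omega^{p}(Q)\,|Q|^{p/q-1}$ with $p/q-1=-\alpha p/n$, hence $|Q|^{\alpha/n}\omega^{q}(Q)^{1/q}\lesssim\omega^{p}(Q)^{1/p}$. The genuine gap is the summation step, which is exactly where the whole difficulty of the lemma lives, and your sketch does not close it. In the range $0<q\leq 1$ you necessarily have $p<q\leq 1$, and after subadditivity and $(*)$ you are left with $\bigl(\sum_{k}\lambda_{k}^{p}\omega^{p}(Q_{k})\bigr)^{1/p}$; but for $p<1$ the pointwise inequality goes the wrong way, $\sum_{k}\lambda_{k}^{p}\chi_{Q_{k}}\geq\bigl(\sum_{k}\lambda_{k}\chi_{Q_{k}}\bigr)^{p}$, so this quantity is \emph{not} dominated by $\|\sum_{k}\lambda_{k}\chi_{Q_{k}}\|_{L^{p}(\omega^{p})}$. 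Taking $Q_{1}=\cdots=Q_{N}=Q$ and $\lambda_{k}=1/N$ shows the term-by-term bound loses an unbounded factor $N^{(1-p)/p}$ while the lemma itself remains true; so the argument fails, not just the write-up. The same overlap loss appears in your $q>1$ branch: after H\"older on each cube you face $\sum_{k}\lambda_{k}\omega^{p}(Q_{k})^{1/p}\|g\chi_{Q_{k}}\|_{L^{q'}(\omega^{q})}$, and any $\ell^{\sigma}$-H\"older in $k$ produces either $\sum_{k}\lambda_{k}^{p}\omega^{p}(Q_{k})$ (same problem) or $\sum_{k}\|g\chi_{Q_{k}}\|_{L^{q'}(\omega^{q})}^{q'}=\int g^{q'}\bigl(\sum_{k}\chi_{Q_{k}}\bigr)\omega^{q}$, which is not $\|g\|_{L^{q'}(\omega^{q})}^{q'}$ when the cubes overlap.

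Your proposed fix --- invoking the openness of reverse H\"older classes to ``reconcile'' $RH_{q/p}$ with the index $RH_{(q/p)'}$ of Lemma~\ref{le2.8} --- is a non sequitur: self-improvement moves $RH_{s}$ to $RH_{s+\varepsilon}$, not to $RH_{s'}$, and in any case the overlap of the cubes is a combinatorial issue that no reverse H\"older index repairs; Lemma~\ref{le2.8} bounds a sum of functions by a sum of their normalized $L^{q}$-averages and is not the tool that absorbs the dual sum here. Two smaller points: the detour through $I_{\alpha}$ buys nothing, since the full weighted Hardy--Littlewood--Sobolev bound needs $\omega\in A_{p,q}$ (strictly stronger than $\omega^{p}\in RH_{q/p}$), and for $f=\sum_{k}\lambda_{k}\chi_{Q_{k}}$ the reduced bound is essentially equivalent to the lemma itself; and the embedding you need is $\ell^{p}\hookrightarrow\ell^{q}$ for $p<q$, not the reverse as written. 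To repair the proof you need a mechanism that exploits the $A_{\infty}$ structure of $\omega^{q}$ globally (e.g.\ the duality/weighted-maximal-function argument of \cite{2020A}) rather than a cube-by-cube estimate followed by elementary resummation.
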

\begin{lemma}\label{le2.10}
    Fix $N\geq0$ and $0\leq\alpha<n$. Let $\mathcal{K}$ be a distribution such that $\vert\mathcal{\widehat K}(\xi)\vert\lesssim\vert\xi\vert^{-\alpha}$. Define the operator $T$ by $Tf=\mathcal{K}\ast f$. Let $a$ be any $(p,q,N)$\text{-}atom or $(p,q,N)$\text{-}block with {\rm supp}$(a)\subset Q$ for $0<p<\infty$ and $1\leq q<\infty$. Then for all $x\in(Q^{*})^{c}$,
    \[
    M_{\Phi}(Ta)(x)\lesssim M_{\alpha_{\tau}}(\chi_{Q})(x)^{\tau},
    \]
    where $\tau=\frac{n+N+1}{n}$ and $\alpha_{\tau}=\alpha/\tau$.
\end{lemma}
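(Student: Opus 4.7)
The plan is to reduce the estimate to a pointwise decay bound on the mollified kernel $K_t := \Phi_t\ast\mathcal K$, and then exploit either the $N$ vanishing moments of the atom or the size constraint of the block. Since $\Phi_t\ast(Ta)=K_t\ast a$, it suffices to show that
$$\sup_{0<t<1}|K_t\ast a(x)|\lesssim \frac{|Q|^\tau}{|x-c_Q|^{n+N+1-\alpha}},\qquad x\in(Q^*)^c,$$
because a direct computation gives $M_{\alpha_\tau}(\chi_Q)(x)\sim |Q|\,|x-c_Q|^{\alpha_\tau-n}$ on $(Q^*)^c$ (the supremum being attained at the smallest cube containing both $x$ and $Q$), and then $\tau\alpha_\tau=\alpha$ together with $\tau n=n+N+1$ yields $M_{\alpha_\tau}(\chi_Q)(x)^\tau\sim |Q|^\tau/|x-c_Q|^{n+N+1-\alpha}$, which matches the target.

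The technical heart is the kernel estimate
$$|\partial^\beta K_t(x)|\lesssim (|x|+t)^{-(n+|\beta|-\alpha)}\qquad\text{for every }|\beta|\le N+1,\ 0<t<1,$$
which I would derive from the Fourier identity $\widehat{\partial^\beta K_t}(\xi)=(2\pi i\xi)^\beta\,\hat\Phi(t\xi)\,\hat{\mathcal K}(\xi)$ together with the hypothesis $|\hat{\mathcal K}(\xi)|\lesssim |\xi|^{-\alpha}$ and the Schwartz decay of $\hat\Phi$, splitting into $|x|\lesssim t$ (a direct $L^1$-bound on the Fourier side, where the $|\xi|^{-\alpha}$ singularity is absorbed by $|\xi|^{|\beta|}$) and $|x|\gg t$ (integration by parts against $e^{2\pi ix\cdot\xi}$). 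With this in hand, for an atom $a$ I would subtract the degree-$N$ Taylor polynomial of $K_t(x-\cdot)$ at $c_Q$ and use the cancellation:
$$K_t\ast a(x)=\int_Q\Bigl[K_t(x-y)-\sum_{|\gamma|\le N}\tfrac{(y-c_Q)^\gamma}{\gamma!}\,\partial^\gamma K_t(x-c_Q)\Bigr]a(y)\,dy.$$
For $x\in(Q^*)^c$ and $y\in Q$, one has $|x-y|\sim |x-c_Q|$ uniformly (and likewise for intermediate points on the segment from $y$ to $c_Q$), so the Taylor remainder is pointwise bounded by $|y-c_Q|^{N+1}|x-c_Q|^{-(n+N+1-\alpha)}\lesssim l(Q)^{N+1}|x-c_Q|^{-(n+N+1-\alpha)}$; integrating against $|a|$ via H\"older and the $L^q$ size bound for the atom then produces exactly $|Q|^\tau/|x-c_Q|^{n+N+1-\alpha}$, as required. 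For a block $b$ supported on $P$ with $|P|>C$, no moment cancellation is available, so the kernel estimate is applied directly, and the lower bound $l(P)\gtrsim 1$ is used to absorb the missing factor $l(P)^{N+1}$ into an overall constant.

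The main obstacle is the kernel estimate itself: since the hypothesis provides only Fourier-side information on $\mathcal K$, one has no direct pointwise control of $\mathcal K$, and obtaining $(|x|+t)$-type decay of $\partial^\beta K_t$ uniformly in $t\in(0,1)$ requires a careful balance between the singularity of $|\xi|^{-\alpha}$ at the origin and the Schwartz decay of $\hat\Phi(t\xi)$ in both regimes $|x|\lesssim t$ and $|x|\gg t$. Once this is secured, the atom case is a classical Taylor-cancellation argument and the block case reduces to bookkeeping with the size constraint $|P|>C$.
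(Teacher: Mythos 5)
The paper does not actually prove this lemma: it is recalled without proof from \cite{2020A}, so there is no in-paper argument to compare against. Your overall architecture --- reduce to a decay estimate for the mollified kernel $K_t=\Phi_t\ast\mathcal K$, use Taylor cancellation of order $N$ for atoms and the constraint $l(P)\gtrsim 1$ for blocks, and identify $M_{\alpha_\tau}(\chi_Q)(x)^{\tau}\sim |Q|^{\tau}|x-c_Q|^{\alpha-n-N-1}$ on $(Q^{*})^{c}$ --- is the standard one and is surely what the cited source does.

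The genuine gap is exactly the step you call the technical heart. The bound $|\partial^{\beta}K_t(x)|\lesssim (|x|+t)^{-(n+|\beta|-\alpha)}$ cannot follow from the single hypothesis $|\widehat{\mathcal K}(\xi)|\lesssim|\xi|^{-\alpha}$: replacing $\widehat{\mathcal K}(\xi)$ by $e^{-2\pi i\xi\cdot x_0}\widehat{\mathcal K}(\xi)$ leaves the hypothesis unchanged but translates $\mathcal K$, hence $K_t$, by $x_0$, destroying any decay centered at the origin. Concretely, for $\widehat{\mathcal K}(\xi)=e^{-2\pi i\xi\cdot x_0}|\xi|^{-\alpha}$ (a translated Riesz potential) the conclusion of the lemma itself fails for a block supported near $0$ and $x$ near $x_0$, so no proof from the literal hypothesis can exist. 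Your proposed derivation breaks in the regime $|x|\gg t$: integration by parts against $e^{2\pi i x\cdot\xi}$ necessarily produces derivatives of $\widehat{\mathcal K}$, which the hypothesis does not control (the $|x|\lesssim t$ regime is fine). What is actually needed, and what is assumed in \cite{2020A} but dropped in this paper's restatement, is a pointwise smoothness condition such as $|\partial^{\beta}\mathcal K(x)|\lesssim |x|^{\alpha-n-|\beta|}$ for $x\neq 0$ and $|\beta|\le N+1$; the Fourier condition only serves the $L^{q}$-boundedness used on the local piece $x\in Q^{*}$, which this lemma does not address. (In the present paper the lemma is only ever applied to $I_{\alpha}^{loc}$, whose kernel $\varphi_0(y)|y|^{\alpha-n}$ does satisfy these derivative bounds.) With that extra hypothesis your Taylor argument goes through. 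A secondary bookkeeping issue: with the paper's normalization $\|a\|_{L^{q}}\le |Q|^{1/q}\omega(Q)^{-1/p}$ your computation produces an extra factor $\omega(Q)^{-1/p}$ on the right-hand side, absent from the stated conclusion; either the atoms here are meant to satisfy $\|a\|_{L^{q}}\le |Q|^{1/q}$ or the conclusion should carry that factor, and your write-up should say which.
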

\begin{lemma}\label{le2.11}
    Given $0<\alpha<n$, $1<r<\infty$, and $1<p<\frac{n}{\alpha}$, define $q$ by $\frac{1}{p}-\frac{1}{q}=\frac{\alpha}{n}$. If $\omega\in A_{p,q}$, then
    \[
        \left \| \left(\sum\limits_{k}(M_{\alpha}g_{k})^{r}\right)^{\frac{1}{r}} \right \|_{L^{q}(\omega^{q})}\lesssim\left \| \left(\sum\limits_{k}\vert g_{k}\vert^{r}\right)^{\frac{1}{r}} \right \|_{L^{p}(\omega^{p})}.
    \]
\end{lemma}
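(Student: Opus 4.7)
The plan is to reduce this weighted vector-valued estimate for the fractional maximal operator to its scalar counterpart by the off-diagonal Rubio de Francia extrapolation machinery. The scalar ingredient is the classical Muckenhoupt--Wheeden theorem, which was recalled in the preliminary discussion just above Lemma~\ref{le2.11}: for every $\omega\in A_{p,q}$ with $1/p-1/q=\alpha/n$ and $1<p<n/\alpha$, one has the scalar bound
\[
\|M_\alpha g\|_{L^q(\omega^q)}\lesssim \|g\|_{L^p(\omega^p)}.
\]

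Given this scalar estimate, the vector-valued inequality is obtained by invoking the off-diagonal extrapolation theorem (in the style of Harboure--Mac\'ias--Segovia, sharpened by Cruz-Uribe--Martell--P\'erez), applied to the family of non-negative pairs $\mathcal F=\{(|g|,M_\alpha g)\}$. The theorem asserts that once one knows the scalar bound $\|G\|_{L^{q_0}(\omega^{q_0})}\lesssim\|F\|_{L^{p_0}(\omega^{p_0})}$ for a single pair $(p_0,q_0)$ with $1/p_0-1/q_0=\alpha/n$ and every $\omega\in A_{p_0,q_0}$, then for all $1<p\leq q<\infty$ with $1/p-1/q=\alpha/n$, every $1<r<\infty$, and every $\omega\in A_{p,q}$, the $\ell^r$-valued version
\[
\Bigl\|\Bigl(\sum_k G_k^r\Bigr)^{1/r}\Bigr\|_{L^q(\omega^q)}\lesssim\Bigl\|\Bigl(\sum_k F_k^r\Bigr)^{1/r}\Bigr\|_{L^p(\omega^p)}
\]
holds automatically. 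Specializing to $(F_k,G_k)=(|g_k|,M_\alpha g_k)$ gives exactly Lemma~\ref{le2.11}. As a consistency check, the case $r=q$ is elementary: by Minkowski's integral inequality (available since $p\leq q$) combined with the scalar bound applied term-by-term,
\[
\Bigl\|\Bigl(\sum_k (M_\alpha g_k)^q\Bigr)^{1/q}\Bigr\|_{L^q(\omega^q)}=\Bigl(\sum_k \|M_\alpha g_k\|_{L^q(\omega^q)}^q\Bigr)^{1/q}\lesssim\Bigl(\sum_k\|g_k\|_{L^p(\omega^p)}^q\Bigr)^{1/q}\lesssim\Bigl\|\Bigl(\sum_k|g_k|^q\Bigr)^{1/q}\Bigr\|_{L^p(\omega^p)}.
\]

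The main obstacle is the justification of the off-diagonal extrapolation step itself, which is where the decoupling between $r$ and the pair $(p,q)$ is actually generated. Its proof runs via a two-sided Rubio de Francia iteration: one constructs companion weights $W_1,W_2$ as Neumann series of normalized Hardy--Littlewood and fractional maximal operators, and then exhibits an appropriate product $W_1^{\theta}W_2^{1-\theta}$ (with $\theta$ determined by the gap $\alpha/n$) that lies in the correct $A_{p_0,q_0}$ class. The scalar inequality is then applied against this manufactured weight, and H\"older's inequality with carefully chosen off-diagonal exponents recombines the estimate. The rigidity of the relation $1/p-1/q=\alpha/n$ is what makes the two iterates balance so that the recombination closes. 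For the purposes of the present paper one may cite the extrapolation theorem as a black box and use only the Muckenhoupt--Wheeden bound as input.
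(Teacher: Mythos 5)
Your argument is correct: the lemma is precisely the vector-valued consequence of the scalar Muckenhoupt--Wheeden bound $\|M_\alpha g\|_{L^q(\omega^q)}\lesssim\|g\|_{L^p(\omega^p)}$ under off-diagonal Rubio de Francia extrapolation, and your $r=q$ consistency check is sound. The paper offers no proof of its own --- it cites \cite{2020A}, where the statement is justified by exactly this extrapolation route from the scalar estimate --- so your proposal coincides with the intended argument.
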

\section{Proofs of Theorems 1.7 and 1.8}\label{sec3}
In this section, we will establish the atomic decomposition characterization of $h_{\omega}^{p}(\mathbb {R}^{n})$ for $0<p<\infty$ and $\omega\in A_{\infty}$. Now we give the proof of the atom decomposition.
\begin{proof}[Proof of Theorem~{\upshape\ref{th1.7}}]
Suppose that $f\in h_\omega^{p}\cap L^{q}$, $0<p<\infty,\ q_\omega<q<\infty.$ By Lemma~{\upshape\ref{le2.5}}, we can obtain
\[
\begin{aligned}
f(x)
&=\sum\limits_{j\in\mathbb N}\sum\limits_{Q\in\Pi_{j+N}}\vert Q\vert\psi_{j}(x-u_{Q})(\psi_{j}\ast h)(u_{Q})\\
&=\sum\limits_{Q\in\Pi_{N}}\vert Q\vert\psi_{0}(x-u_{Q})(\psi_{0}\ast h)(u_{Q})+\sum\limits_{j\geq1}\sum\limits_{Q\in\Pi_{j+N}}\vert Q\vert\psi_{j}(x-u_{Q})(\psi_{j}\ast h)(u_{Q})\\
&=\uppercase\expandafter{\romannumeral1}+\uppercase\expandafter{\romannumeral2}.
\end{aligned}
\]
Define
\[
S^{0}(h)(x)=\left\{\sum\limits_{P\in\Pi_{N}}\sup\limits_{u\in P}\vert\psi_{0}\ast h(u)\vert^2\chi_{P}(x)\right\}^{1/2}
\]
and
\[
S^{1}(h)(x)=\left\{\sum\limits_{j\geq1}\sum\limits_{Q\in\Pi_{j+N}}\sup\limits_{u\in Q}\vert\psi_{j}\ast h(u)\vert^2\chi_{Q}(x)\right\}^{1/2}.
\]
For any $ i\in\mathbb{Z}$ and $k=0,1$, set
\[
\Omega_{i,k}=\left\{x\in\mathbb{R}^{n}\colon S^{k}(h)(x)>2^i\right\}
\]
and
\[
\widetilde\Omega_{i,k}=\left\{x\in\mathbb{R}^{n}\colon M(\chi_{\Omega_{i,k}})(x)>\frac{1}{10^{n}}\right\}.
\]
Denote
\[
B_{i,0}=\left\{P\colon P\in\Pi_{N},\vert P\cap\Omega_{i,0}\vert>\frac{1}{2}\vert P\vert,\vert P\cap\Omega_{i+1,0}\vert\leq\frac{1}{2}\vert P\vert\right\}
\]
and
\[
B_{i,1}=\left\{Q\colon Q\in\bigcup_{j\geq1}\Pi_{j+N},\vert Q\cap\Omega_{i,1}\vert>\frac{1}{2}\vert Q\vert,\vert Q\cap\Omega_{i+1,1}\vert\leq\frac{1}{2}\vert Q\vert\right\}.
\]
Denote that $\widetilde{Q}\in B_{i,1}$ are maximal dyadic cubes in $B_{i,1}$. If$\ l(Q)=2^{-j-N}$, use $\psi_{Q}$ to denote $\psi_{j}$.\par
Now we estimate $\uppercase\expandafter{\romannumeral2}$. We can rewrite
\[
\begin{aligned}
\uppercase\expandafter{\romannumeral2}
&=\sum_{i=-\infty}^{+\infty}\sum_{\widetilde{Q}\in B_{i,1}}\sum_{Q\subset\widetilde{Q},Q\in B_{i,1}}\vert Q\vert(\psi_{Q}\ast h)(u_Q)\psi_{Q}(x-u_Q)\\
&=:\sum_{i=-\infty}^{+\infty}\sum_{\widetilde{Q}\in B_{i,1}}\lambda_{\widetilde{Q}}^{i}a_{\widetilde{Q}}^{i}(x),
\end{aligned}
\]
where
\[
a_{\widetilde{Q}}^{i}(x):=\frac{1}{\lambda_{\widetilde{Q}}^{i}}\sum_{Q\subset\widetilde{Q}}\vert Q\vert(\psi_{Q}\ast h)(u_Q)\psi_{Q}(x-u_Q)
\]
and
\[
\lambda_{\widetilde{Q}}^{i}:=\widetilde C\frac{\omega(\widetilde Q)^{\frac{1}{p}}}{\vert\widetilde Q\vert^{\frac{1}{q}}}\left \|\left\{\sum_{Q\subset\widetilde{Q}}\vert\psi_{Q}\ast h(u_{Q})\vert^2\chi_{Q}\right\}^{1/2}\right \|_{L^{q}}.
\]
By the definition of $\psi_{Q}$, we find that $a_{\widetilde{Q}}^{i}$ is supported in $c_{1}\widetilde Q$ where $c_{1}=2^{N+3}$ and the vanishing moment condition of $a_{\widetilde{Q}}^{i}$ follows from the vanishing moment condition of $\psi_{Q}$. There exists a constant $C\geq1$ such that $\vert c_{1}\widetilde Q\vert\leq C$. Then we try to obtain the size condition of $a_{\widetilde{Q}}^{i}$. By the duality argument,
\[
\begin{aligned}
 &\left \|\sum_{Q\subset\widetilde{Q}}\vert Q\vert(\psi_{Q}\ast h)(u_Q)\psi_{Q}(x-u_Q)\right \|_{L^{q}}\\
 &=\sup\limits_{\left \|g\right \|_{L^{q^{\prime}}}\leq1}\left \langle \sum_{Q\subset\widetilde{Q}}\vert Q\vert(\psi_{Q}\ast h)(u_Q)\psi_{Q}(x-u_Q),g \right \rangle\\
 &=\sup\limits_{\left \|g\right \|_{L^{q^{\prime}}}\leq1}\left\vert\int_{\mathbb R^{n}}\sum_{Q\subset\widetilde{Q}}(\psi_{Q}\ast h)(u_Q)(\psi_{Q}\ast g)(u_{Q})\chi_{Q}(y)dy\right\vert\\
 &\leq\sup\limits_{\left \|g\right \|_{L^{q^{\prime}}}\leq1}\int_{\mathbb {R}^{n}}\left(\sum_{Q\subset\widetilde{Q}}\vert(\psi_{Q}\ast h)(u_Q)\vert^2\chi_{Q}(y)\right)^{\frac{1}{2}}\\
 &\quad\times\left(\sum_{Q\subset\widetilde{Q}}\vert(\psi_{Q}\ast g)(u_Q)\vert^2\chi_{Q}(y)\right)^{\frac{1}{2}}dy\\
 &\leq\sup\limits_{\left \|g\right \|_{L^{q^{\prime}}}\leq1}\left\{\int_{\mathbb R^{n}}\left(\sum_{Q\subset\widetilde{Q}}\vert(\psi_{Q}\ast h)(u_Q)\vert^2\chi_{Q}(y)\right)^{\frac{q}{2}}dy\right\}^{\frac{1}{q}}\\
 &\quad\times\left\{\int_{\mathbb R^{n}}\left(\sum_{Q\subset\widetilde{Q}}\vert(\psi_{Q}\ast g)(u_Q)\vert^2\chi_{Q}(y)\right)^{\frac{q^{\prime}}{2}}dy\right\}^{\frac{1}{q^{\prime}}}\\
 &\leq\sup\limits_{\left \|g\right \|_{L^{q^{\prime}}}\leq1}\left \|S^{1}(g)\right \|_{L^{q^{\prime}}}\left \|\left\{\sum_{Q\subset\widetilde{Q}}\vert(\psi_{Q}\ast h)(u_Q)\vert^2\chi_{Q}(y)\right\}^{\frac{1}{2}}\right \|_{L^{q}}.
\end{aligned}
\]
Therefore we can choose an appropriate constant $\widetilde C$ such that
\[
\left \|a_{\widetilde{Q}}^{i}(x)\right \|_{L^{q}}\leq\frac{\vert\widetilde Q\vert^{\frac{1}{q}}}{\omega(\widetilde Q)^{\frac{1}{p}}}.
\]
In conclusion, each $a_{\widetilde{Q}}^{i}(x)$ is a $\omega\text{-}(p,q,s)\text{-}$atom of $h_\omega^{p}(\mathbb R^{n})$.\par
Then, we try to prove that for any $0<\eta<\infty$, we have
\[
\left \|\left(\sum\limits_{i}\sum\limits_{\widetilde Q\in B_{i,1}}\left(\frac{\lambda_{\widetilde{Q}}^{i}\chi_{c_{1}\widetilde Q_i}}{\omega(\widetilde Q)^{\frac{1}{p}}}\right)^{\eta}\right)^{1/\eta}\right \|_{L_{\omega}^{p}}\leq C\left \|f\right \|_{h_{\omega}^{p}}.
\]
Since $\bigcup\limits_{Q\in B_{i,1}}Q\subseteq\widetilde\Omega_{i,1}$, note that $\widetilde Q_{i}\subset\widetilde\Omega_{i,1}$ when $\widetilde Q\in B_{i,1}$.\\
We claim that
\begin{equation}\label{cl3.1}
    \left \|\left\{\sum_{Q\subset\widetilde{Q}}\vert\psi_{Q}\ast h(u_{Q})\vert^2\chi_{Q}\right\}^{\frac{1}{2}} \right\|_{L^{q}}\leq C2^{i}\vert\widetilde Q\vert^{\frac{1}{q}}.
\end{equation}
When $x\in Q$ and $Q\in B_{i,1}$, $M(\chi_{Q\cap\widetilde\Omega_{i,1}\backslash\Omega_{i+1,1}})(x)>\frac{1}{2}.$ Moreover, since
\[
\chi_{Q}(x)\leq2M(\chi_{Q\cap\widetilde\Omega_{i,1}\backslash\Omega_{i+1,1}})(x),
\]
then
\[
\chi_{Q}(x)\leq4M^{2}(\chi_{Q\cap\widetilde\Omega_{i,1}\backslash\Omega_{i+1,1}})(x).
\]
By Lemma~{\upshape\ref{le2.1}}, for any $1<q<\infty$
\[
\begin{aligned}
&\left \|\left\{\sum_{Q\subset\widetilde{Q}}\vert\psi_{Q}\ast h(u_{Q})\vert^2\chi_{Q}\right\}^{1/2}\right \|_{L^{q}}^{q}\\
&=\int_{\mathbb R^{n}}\left(\sum_{Q\subset\widetilde{Q}}\vert\psi_{Q}\ast h(u_{Q})\vert^2\chi_{Q}(x)\right)^{\frac{q}{2}}dx\\
&\leq C\int_{\mathbb R^{n}}\left(\sum_{Q\subset\widetilde{Q}}\vert\psi_{Q}\ast h(u_{Q})\vert^2 M^{2}(\chi_{Q\cap\widetilde\Omega_{i,1}\backslash\Omega_{i+1,1}})(x)\right)^{\frac{q}{2}}dx\\
&\leq C\int_{\mathbb R^{n}}\left(\sum_{Q\subset\widetilde{Q}}\vert\psi_{Q}\ast h(u_{Q})\vert^2 \chi_{Q\cap\widetilde\Omega_{i,1}\backslash\Omega_{i+1,1}}(x)\right)^{\frac{q}{2}}dx\\
&\leq C\int_{\widetilde Q\cap\widetilde\Omega_{i,1}\backslash\Omega_{i+1,1}}\left(\sum_{Q\subset\widetilde{Q}}\vert\psi_{Q}\ast h(u_{Q})\vert^2 \chi_{Q}(x)\right)^{\frac{q}{2}}dx\\
&\leq C\int_{\widetilde Q\cap\widetilde\Omega_{i}\backslash\Omega_{i+1}}\left(S^{1}(h)\right)^{q}dx\\
&\leq C2^{iq}\vert\widetilde Q\vert.
\end{aligned}
\]
Hence we finished the proof of the claim ({\upshape\ref{cl3.1}}). Now we can obtain
\[
\left \|\left(\sum\limits_{i}\sum\limits_{\widetilde Q\in B_{i,1}}\left(\frac{\lambda_{\widetilde{Q}}^{i}\chi_{c_{1}\widetilde Q_i}}{\omega(\widetilde Q)^{\frac{1}{p}}}\right)^{\eta}\right)^{1/\eta}\right \|_{L_{\omega}^{p}}\leq C\left \|\left(\sum\limits_{i}\sum\limits_{\widetilde Q\in B_{i,1}}\left(2^{i}\chi_{c_{1}\widetilde Q_{i}}\right)^{\eta}\right)^{1/\eta}\right \|_{L_{\omega}^{p}}.
\]
Since $\Omega_{i,1}\subset\widetilde\Omega_{i,1}$ for any $i\in\mathbb{Z}$ and $\vert\widetilde\Omega_{i,1}\vert\leq C\vert\Omega_{i,1}\vert$ for any $x\in\mathbb{R}^{n}$, we have
\[
\chi_{\widetilde\Omega_{i,1}}(x)\leq CM^{\gamma}(\chi_{\Omega_{i,1}})(x)
\]
where $\gamma$ is large enough such that $\gamma p>q_{\omega}$ and $\gamma \eta>1$. Applying Lemma~{\upshape\ref{le2.1}} with $\omega\in A_{\gamma p}$, we can obtain
\[
\begin{aligned}
&\left \|\left(\sum\limits_{i}\sum\limits_{\widetilde Q\in B_{i,1}}\left(2^{i}\chi_{c_{1}\widetilde Q_{i}}\right)^{\eta}\right)^{1/\eta}\right \|_{L_{\omega}^{p}}\leq C\left \|\left(\sum\limits_{i}\left(2^{i}\chi_{\widetilde \Omega_{i,1}}\right)^{\eta}\right)^{\frac{1}{\eta}}\right \|_{L_{\omega}^{p}}\\
&\leq C\left \|\left(\sum\limits_{i}\left(2^{i/\gamma}M(\chi_{\Omega_{i,1}})\right)^{\gamma\eta}\right)^{1/\eta}\right \|_{L_{\omega}^{p}}=C\left \|\left(\sum\limits_{i}\left(2^{i/\gamma}M(\chi_{ \Omega_{i,1}})\right)^{\gamma\eta}\right)^{1/\gamma\eta}\right \|_{L_{\omega}^{\gamma p}}^{\gamma}\\
&\leq C\left \|\left(\sum\limits_{i}2^{i\eta}\chi_{ \Omega_{i,1}}\right)^{1/\eta}\right \|_{L_{\omega}^{p}}.
\end{aligned}
\]
It's easy to know that $\Omega_{i+1,1}\subset\Omega_{i,1}$ and $\vert\bigcap\limits_{i=1}^{\infty}\Omega_{i,1}\vert=0$. Then for a.e. $x\in\mathbb{R}^{n}$, we have
\[
\left(\sum\limits_{i}2^{i\eta}\chi_{ \Omega_{i,1}}(x)\right)^{\frac{1}{\eta}}\sim\left(\sum\limits_{i}2^{i\eta}\chi_{\Omega_{i,1}\backslash\Omega_{i+1,1}}(x)\right)^{\frac{1}{\eta}}.
\]
Hence, together with Corollary~{\upshape\ref{co2.6}}
\[
\begin{aligned}
    &\left \|\left(\sum\limits_{i}2^{i\eta}\chi_{ \Omega_{i,1}}\right)^{\frac{1}{\eta}}\right \|_{L_{\omega}^{p}}^{p}\leq C\left \|\left(\sum\limits_{i}2^{i\eta}\chi_{\Omega_{i,1}\backslash\Omega_{i+1,1}}\right)^{\frac{1}{\eta}}\right \|_{L_{\omega}^{p}}^{p}\\
&=C\int_{\mathbb{R}^{n}}\left(\sum\limits_{i}2^{i}\chi_{\Omega_{i,1}\backslash\Omega_{i+1,1}}\right)^{p}\omega(x)dx=C\sum\limits_{i}\int_{\Omega_{i,1}\backslash\Omega_{i+1,1}}2^{ip}\omega(x)dx\\
&\leq C\int_{\mathbb{R}^{n}}\left(S^{1}(f)\right)^{p}\omega(x)dx\leq C\left \|f\right \|_{h_{\omega}^{p}}^{p}.
\end{aligned}
\]\par
Next we estimate $\uppercase\expandafter{\romannumeral1}.$ We can use $P$ to denote Q if $Q\in \Pi_{N}$ and rewrite
\[
\uppercase\expandafter{\romannumeral1}=:\sum\limits_{i}\sum\limits_{P\in B_{i,0}}\mu_{P}^{i}b_{P}^{i}(x)
\]
where $b_{P}^{i}(x)=\frac{1}{\mu_{P}^{i}}\vert P\vert(\psi_{0}\ast h)(u_{P})\psi_{0}(x-u_{P})$ and $\mu_{P}^{i}=\widetilde C\vert(\psi_{0}\ast h)(u_{P})\vert$. Let $\widetilde C=2^{-Nn}\omega(P)^{\frac{1}{p}}\vert P\vert^{-\frac{1}{q}}\left \|\psi_{0}\right \|_{L^{q}}$.\par
Similarly, by the definition of $\psi_{0}$, we find that $b_{P}^{i}$ is supported in $c_{0}P$ where $c_{0}=2^{N+2}$ and the vanishing moment condition of $b_{P}^{i}$ follows from the vanishing moment condition of $\psi_{0}$. Moreover, there exist a constant $C\geq1$ such that $\vert c_{0}P\vert>C$. It's easy to prove that $\left \|b_{P}^{i}\right \|_{L^{q}}=\frac{1}{\mu_{P}^{i}}\vert P\vert\vert\psi_{0}\ast h(u_{P})\vert(\int\vert\psi_{0}(x-u_{P})\vert^{q}dx)^{\frac{1}{q}}\leq\vert P\vert^{\frac{1}{q}}\omega(P)^{-\frac{1}{p}}.$\par
In conclusion, each $b_{P}^{i}(x)$ is a $\omega\text{-}(p,q,s)\text{-}$block of $h_\omega^{p}(\mathbb R^{n})$.\par
Note that $P_{i}\subset\widetilde\Omega_{i,0}$ when $P\in B_{i,0}$.
Repeating the similar but easier argument, we can obtain
\[
\left \|\left(\sum\limits_{i}\sum\limits_{P\in B_{i,0}}\left(\frac{\mu_{P}^{i} \chi_{c_{0}P_i}}{\omega(P)^{\frac{1}{p}}}\right)^{\eta}\right)^{\frac{1}{\eta}}\right \|_{L_{\omega}^{p}}\leq C\left \|f\right \|_{h_{\omega}^{p}}.
\]\\
Consequently, we can know that
\[
\left \|\left(\sum\limits_{i}\sum\limits_{\widetilde Q\in B_{i,1}}\left(\frac{\lambda_{\widetilde Q}^{i} \chi_{c_{1}\widetilde Q_i}}{\omega(\widetilde Q)^{\frac{1}{p}}}\right)^{\eta}\right)^{\frac{1}{\eta}}\right \|_{L_{\omega}^{p}}+\left \|\left(\sum\limits_{i}\sum\limits_{P\in B_{i,0}}\left(\frac{\mu_{P}^{i} \chi_{c_{0}P_i}}{\omega(P)^{\frac{1}{p}}}\right)^{\eta}\right)^{\frac{1}{\eta}}\right \|_{L_{\omega}^{p}}\leq C\left \|f\right \|_{h_{\omega}^{p}}.
\]
\end{proof}
Next we will prove the reconstruction theorem for the atomic decomposition.
\begin{proof}[Proof of Theorem~{\upshape\ref{th1.8}}]
Notice that for almost every $x\in\mathbb{R}^{n}$
\[
\vert g(f)(x)\vert\leq\sum\limits_{j=1}^{\infty}\lambda_{j}\vert g(a_{j})(x)\vert+\sum\limits_{j=1}^{\infty}\mu_{j}\vert g(b_{j})(x)\vert=\uppercase\expandafter{\romannumeral1}+\uppercase\expandafter{\romannumeral2}.
\]\par
For $\uppercase\expandafter{\romannumeral2}$,
\[
\begin{aligned}
\uppercase\expandafter{\romannumeral2}
&=\sum\limits_{j=1}^{\infty}\mu_{j}\vert g(b_{j})(x)\vert\chi_{4P_{j}}(x)+\sum\limits_{j=1}^{\infty}\mu_{j}\vert g(b_{j})(x)\vert\chi_{(4P_{j})^{c}}(x)\\
&=\uppercase\expandafter{\romannumeral1}_{1}+\uppercase\expandafter{\romannumeral1}_{2}.
\end{aligned}
\]\par
Now we estimate the term $\uppercase\expandafter{\romannumeral1}_{1}$. Denote $h_{j}(x)=g(b_{j})(x)\chi_{4P_{j}}$. By the size condition of atoms and $q>q_{r}$, we obtain
\[
\left \|h_{j}\right \|_{L^{q}}\leq\left \|g(b_{j})\right \|_{L^{q}}\leq C\left \|b_{j}\right \|_{L^{q}}\leq C\vert P_{j}\vert^{\frac{1}{q}}\omega(P_{j})^{-\frac{1}{p}}.
\]
Together with the fact ${\rm supp}(h_{j})\subset 4P_{j}$ and Lemma~{\upshape\ref{le2.3}}, we obtain
\[
\left \|\uppercase\expandafter{\romannumeral1}_{1}\right \|_{L_{\omega}^{p}}=\left \|\sum\limits_{j=1}^{\infty}\mu_{j}h_{j}(x)\right \|_{L_{\omega}^{p}}\leq C\left \|\sum\limits_{j=1}^{\infty}\frac{\mu_{j}\chi_{4P_{j}}}{\omega(P_{j})^{\frac{1}{p}}}\right \|_{L_{\omega}^{p}}.
\]\\
Assume that $\gamma$ is a large constant satisfying $\gamma p>q_{\omega}$. And it's easy to prove that $\chi_{4P_{j}}(x)\leq CM^{\gamma}(\chi_{P_{j}})(x)$. Then by Lemma~{\upshape\ref{le2.1}}, we obtain
\[
\left \|\sum\limits_{j=1}^{\infty}\frac{\mu_{j}\chi_{4P_{j}}}{\omega(P_{j})^{\frac{1}{p}}}\right \|_{L_{\omega}^{p}}\leq C\left \|\left(\sum\limits_{j=1}^{\infty}\frac{\mu_{j}M^{\gamma}(\chi_{P_{j}})}{\omega(P_{j})^{\frac{1}{p}}}\right)^{\frac{1}{\gamma}}\right \|_{L_{\omega}^{\gamma p}}^{\gamma}\leq C\left \|\sum\limits_{j=1}^{\infty}\frac{\mu_{j}\chi_{P_{j}}}{\omega(P_{j})^{\frac{1}{p}}}\right \|_{L_{\omega}^{p}}
\]
which implies that
\[
\left \|\uppercase\expandafter{\romannumeral1}_{1}\right \|_{L_{\omega}^{p}}\leq C\left \|\sum\limits_{j=1}^{\infty}\frac{\mu_{j}\chi_{P_{j}}}{\omega(P_{j})^{\frac{1}{p}}}\right \|_{L_{\omega}^{p}}.
\]\par
Next we estimate $\uppercase\expandafter{\romannumeral1}_{2}$. For all $x\in(4P_{j})^{c}$, we have
\[
\begin{aligned}
\vert(\phi_{i}\ast b_{j})(x)\vert
&\leq\int_{P_{j}}\vert\phi_{i}(x-y)b_{j}(y)\vert dy\\
&\leq \sup\limits_{z\in P_{j}}\vert\phi_{i}(x-z)\vert\int_{P_{j}}\vert b_{j}(y)\vert dy\\
&\leq C\frac{2^{in}}{(1+2^{i}\vert x-x_{P_{j}}\vert)^{M}}\left \|b_{j}\right \|_{L^{q}}\vert P_{j}\vert^{\frac{1}{q^{\prime}}}\\
&\leq C\frac{2^{in}}{(1+2^{i}\vert x-x_{P_{j}}\vert)^{M}}\frac{\vert P_{j}\vert^{\frac{1}{q}}\vert P_{j}\vert^{\frac{1}{q^{\prime}}}}{\omega(P_{j})^{\frac{1}{p}}}\\
&\leq C\frac{2^{in}}{(1+2^{i}\vert x-x_{P_{j}}\vert)^{M}}\frac{l(P_{j})^{M}}{\omega(P_{j})^{\frac{1}{p}}}
\end{aligned}
\]
for some sufficient large $M>n>0$. Observe that $\vert P_{j}\vert> C\geq1$ and if $M>n$,
\[
\sum\limits_{j=0}^{\infty}C\frac{2^{in}}{(1+2^{i}\vert x-x_{P_{j}}\vert)^{M}}\leq\frac{C}{\vert x-x_{P_{j}}\vert^{M}}.
\]
Therefore, we obtain
\[
\begin{aligned}
\uppercase\expandafter{\romannumeral1}_{2}
&=\sum\limits_{j=1}^{\infty}\mu_{j}\left\{\sum_{i\in\mathbb{N}}\vert\phi_{i}\ast b_{j}(x)\vert^{2}\right\}^{\frac{1}{2}}\chi_{(4P_{j})^{c}}(x)\\
&\leq\sum\limits_{j=1}^{\infty}\mu_{j}\left(\sum_{i\in\mathbb{N}}\vert\phi_{i}\ast b_{j}(x)\vert\right)\chi_{(4P_{j})^{c}}(x)\\
&\leq C\sum\limits_{j=1}^{\infty}\mu_{j}\frac{{\omega(P_{j})}^{-\frac{1}{p}}(l(P_{j}))^{M}}{\vert x-x_{P_{j}}\vert^{M}}\chi_{(4P_{j})^{c}}(x).\\
\end{aligned}
\]
Let $M=n+s+1$ and $\gamma=\frac{M}{n}$. We have
\[
\begin{aligned}
\uppercase\expandafter{\romannumeral1}_{2}
&\leq C\sum\limits_{j=1}^{\infty}\mu_{j}{\omega(P_{j})}^{-\frac{1}{p}}{\left({\left(\frac{l(P_{j})}{\vert x-x_{P_{j}}\vert}\right)}^{n}\right)}^{\gamma}\chi_{(4P_{j})^{c}}(x)\\
&\leq C\sum\limits_{j=1}^{\infty}\mu_{j}{\omega(P_{j})}^{-\frac{1}{p}}(M\chi_{P_{j}})^{\gamma}(x)
\end{aligned}
\]
Since $\omega\in A_{\infty}$ with the critical index $q_{\omega}$ and $s\geq {\rm max}\{[n(\frac{q_{\omega}}{p}-1)],-1\}$, we know that $\gamma p>q_{\omega}$ and then $\omega\in A_{\gamma p}$. Applying Lemma~{\upshape\ref{le2.1}} yields that
\[
\begin{aligned}
    \left \|\uppercase\expandafter{\romannumeral1}_{2}\right \|_{L_{\omega}^{p}}&\leq C\left \|\sum\limits_{j=1}^{\infty}\frac{\mu_{j}(M\chi_{P_{j}})^{\gamma}}{\omega(P_{j})^{\frac{1}{p}}}\right \|_{L_{\omega}^{p}}=C\left \|\left(\sum\limits_{j=1}^{\infty}\frac{\mu_{j}(M\chi_{P_{j}})^{\gamma}}{\omega(P_{j})^{\frac{1}{p}}}\right)^{\frac{1}{\gamma}}\right \|_{L_{\omega}^{\gamma p}}^{\gamma}\\
&\leq C\left \|\left(\sum\limits_{j=1}^{\infty}\frac{\mu_{j}\chi_{P_{j}}}{\omega(P_{j})^{\frac{1}{p}}}\right)^{\frac{1}{\gamma}}\right \|_{L_{\omega}^{\gamma p}}^{\gamma}= C\left \|\sum\limits_{j=1}^{\infty}\frac{\mu_{j}\chi_{P_{j}}}{\omega(P_{j})^{\frac{1}{p}}}\right \|_{L_{\omega}^{p}}.
\end{aligned}
\]
By Lemma~{\upshape\ref{le2.7}} and the estimates of $\uppercase\expandafter{\romannumeral1}_{1}$ and $\uppercase\expandafter{\romannumeral1}_{2}$, we obtain
\[
\left \|\sum\limits_{j=1}^{\infty}\mu_{j}b_{j}\right \|_{h_{\omega}^{p}}\leq C\left \|\sum\limits_{j=1}^{\infty}\mu_{j}g(b_{j})\right \|_{L_{\omega}^{p}}
\leq C\left \|\sum\limits_{j=1}^{\infty}\frac{\mu_{j}\chi_{P_{j}}}{\omega(P_{j})^{\frac{1}{p}}}\right \|_{L_{\omega}^{p}}.
\]\par
Similarly, for $\uppercase\expandafter{\romannumeral1}$, we can find that
\[
\begin{aligned}
\uppercase\expandafter{\romannumeral1}
&=\sum\limits_{j=1}^{\infty}\lambda_{j}\vert g(a_{j})(x)\vert\chi_{2Q_{j}}(x)+\sum\limits_{j=1}^{\infty}\lambda_{j}\vert g(a_{j})(x)\vert\chi_{(2Q_{j})^{c}}(x)\\
&:=\uppercase\expandafter{\romannumeral1}_{1}+\uppercase\expandafter{\romannumeral1}_{2}
\end{aligned}
\]
and
\[
\left \|\uppercase\expandafter{\romannumeral1}_{1}\right \|_{L_{\omega}^{p}}\leq C\left \|\sum\limits_{j=1}^{\infty}\frac{\lambda_j \chi_{Q_j}}{\omega(Q_j)^{\frac{1}{p}}}\right \|_{L_{\omega}^{p}}.
\]
Note that $P_{j}\equiv P_{j}(x_{j},l(P_{j}))$. Denote by $p^{s}_{i}$ the sum of first $s+1$ terms in the Taylor expansion of $\phi_{i}(y-z)$ at $y-x_{j}$. Details are similar to those in \cite{tan2022revisit}. Applying the vanishing moment and size condition of $a_{j}$ and the smoothness conditions on $\phi_{j}$, we can obtain
\[
\begin{aligned}
    g(a_{j})^{2}(x)
    &=\sum\limits_{i}\left\vert\int_{\mathbb {R}^{n}}a_{j}(z)[\phi_{i}(y-z)-p^{s}_{i}(y,z,x_{j})]dz\right\vert^{2}\\
&\leq C\frac{\omega(Q_{j})^{-\frac{2}{p}}l(Q_{j})^{2(n+s+1)}}{\vert y-x_{j}\vert^{2(n+s+1)}}.
\end{aligned}
\]
Let $\gamma=\frac{n+s+1}{n}$. By repeating the similar analysis as in the estimate of $\uppercase\expandafter{\romannumeral2}$, we can obtain
\[
\left \|\uppercase\expandafter{\romannumeral1}_{2}\right \|_{L_{\omega}^{p}}\leq C\left \|\sum\limits_{j=1}^{\infty}\frac{\lambda_j \chi_{Q_j}}{\omega(Q_j)^{\frac{1}{p}}}\right \|_{L_{\omega}^{p}}.
\]
Therefore, it concludes that
\[
\left \|\sum\limits_{j=1}^{\infty}\lambda_{j}a_{j}\right \|_{h_{\omega}^{p}}\leq C\left \|\sum\limits_{j=1}^{\infty}\frac{\lambda_{j}\chi_{Q_{j}}}{\omega(Q_{j})^{\frac{1}{p}}}\right \|_{L_{\omega}^{p}}
\]\par
Observe that
\[
\left \|\sum\limits_{j=1}^{\infty}\frac{\lambda_j \chi_{Q_j}}{\omega(Q_j)^{\frac{1}{p}}}\right \|_{L_{\omega}^{p}}+\left \|\sum\limits_{j=1}^{\infty}\frac{\mu_j \chi_{P_j}}{\omega(P_j)^{\frac{1}{p}}}\right \|_{L_{\omega}^{p}}<\infty,
\]\\
which implies that
\[
\left \|\sum\limits_{j=N}^{\infty}\frac{\lambda_j \chi_{Q_j}}{\omega(Q_j)^{\frac{1}{p}}}\right \|_{L_{\omega}^{p}}+\left \|\sum\limits_{j=N}^{\infty}\frac{\mu_j \chi_{P_j}}{\omega(P_j)^{\frac{1}{p}}}\right \|_{L_{\omega}^{p}}\to0,\ as\ N\to\infty.
\]
Thus,
\[
\lim_{N\to\infty}\left \|\sum\limits_{j=N}^{\infty}\frac{\lambda_j \chi_{Q_j}}{\omega(Q_j)^{\frac{1}{p}}}\right \|_{L_{\omega}^{p}}=0,\ \lim_{N\to\infty}\left \|\sum\limits_{j=N}^{\infty}\frac{\mu_j \chi_{P_j}}{\omega(P_j)^{\frac{1}{p}}}\right \|_{L_{\omega}^{p}}=0.
\]
Notice that
\[
\left \|\sum\limits_{j=N}^{\infty}\lambda_{j}a_{j}\right \|_{h_{\omega}^{p}}\leq C\left \|\sum\limits_{j=N}^{\infty}\frac{\lambda_j \chi_{Q_j}}{\omega(Q_j)^{\frac{1}{p}}}\right \|_{L_{\omega}^{p}};
\]
\[
\left \|\sum\limits_{j=N}^{\infty}\mu_{j}b_{j}\right \|_{h_{\omega}^{p}}\leq C\left \|\sum\limits_{j=N}^{\infty}\frac{\mu_j \chi_{P_j}}{\omega(P_j)^{\frac{1}{p}}}\right \|_{L_{\omega}^{p}}.
\]\\
Therefore, we can obtain
\[
\lim_{N\to\infty}\left \|\sum\limits_{j=N}^{\infty}\lambda_{j}a_{j}\right \|_{h_{\omega}^{p}}=0,\ \lim_{N\to\infty}\left \|\sum\limits_{j=N}^{\infty}\mu_{j}b_{j}\right \|_{h_{\omega}^{p}}=0
\]
which implies that the series $\sum\limits_{j}\lambda_{j}a_{j}+\sum\limits_{j}\mu_{j}b_{j}$ converges in $h_{\omega}^{p}(\mathbb{R}^{n}).$
\end{proof}

\section{Proofs of Theorems 1.9, 1.10 and 1.12}\label{sec4}
This section is devoted to proving the boundedness results given in Theorem~{\upshape\ref{th1.9}} and {\upshape\ref{th1.10}} for the inhomogenous Calder\'on-Zygmund singular integrals and Theorem~{\upshape\ref{th1.12}} for the local fractional integrals.
\begin{proof}[Proof of Theorem~{\upshape\ref{th1.9}}]
Recalling the atomic decomposition of weighted local Hardy spaces in Theorem~{\upshape\ref{th1.7}}, we know that if $f\in h_\omega^{p}(\mathbb R^{n})\cap L^{q}(\mathbb R^{n})$, there exist a sequence of $\omega\text{-}(p,q,s)\text{-}$atoms $\{a_{j}\}_{j=1}^{\infty}$ with a corresponding sequence of non-negative numbers $\{\lambda_{j}\}_{j=1}^{\infty}$ and a sequence of $\omega\text{-}(p,q,s)\text{-}$blocks $\{b_{j}\}_{j=1}^{\infty}$ with a corresponding sequence of non-negative numbers $\{\mu_{j}\}_{j=1}^{\infty}$ such that $f=\sum\limits_{j}\lambda_{j}a_{j}+\sum\limits_{j}\mu_{j}b_{j}$ in $h_\omega^{p}(\mathbb R^{n})\cap L^{q}(\mathbb R^{n})$ with $(\frac{n+\eta}{n})p<q<\infty$, and that
\[
\left \|\sum\limits_{j=1}^{\infty}\frac{\lambda_j \chi_{Q_j}}{\omega(Q_j)^{\frac{1}{p}}}\right \|_{L_{\omega}^{p}}+\left \|\sum\limits_{j=1}^{\infty}\frac{\mu_j \chi_{P_j}}{\omega(P_j)^{\frac{1}{p}}}\right \|_{L_{\omega}^{p}}\leq C\left \|f\right \|_{h_{\omega}^{p}}
\]
To prove the theorem, it will suffice to prove that
\[
\left \| T(f) \right \|_{L_{\omega}^{p}}\leq C_{1}\left \|\sum\limits_{j=1}^{\infty}\frac{\lambda_j \chi_{Q_j}}{\omega(Q_j)^{\frac{1}{p}}}\right \|_{L_{\omega}^{p}}+C_{2}\left \|\sum\limits_{j=1}^{\infty}\frac{\mu_j \chi_{P_j}}{\omega(P_j)^{\frac{1}{p}}}\right \|_{L_{\omega}^{p}}.
\]\par
In fact, for $x\in \mathbb R^{n}$, we have
\[
\left\vert T(f)(x) \right\vert\leq\sum\limits_{j}\vert\lambda_{j}\vert\vert T(a_{j})(x)\vert+\sum\limits_{j}\vert\mu_{j}\vert\vert T(b_{j})(x)\vert=:\uppercase\expandafter{\romannumeral1}+\uppercase\expandafter{\romannumeral2}.
\]\par
First we can prove that
\[
\left \| \uppercase\expandafter{\romannumeral1} \right \|_{L_{\omega}^{p}}\leq C\left \|\sum\limits_{j=1}^{\infty}\frac{\lambda_j \chi_{Q_j}}{\omega(Q_j)^{\frac{1}{p}}}\right \|_{L_{\omega}^{p}}.
\]
For $x\in\mathbb R^{n}$,
\[
\uppercase\expandafter{\romannumeral1}\leq\sum\limits_{j=1}^{\infty}\vert \lambda_{j}\vert\vert T(a_{j})(x)\vert\chi_{Q_{j}^{*}}(x)+\sum\limits_{j=1}^{\infty}\vert\lambda_{j}\vert\vert T(a_{j})(x)\vert\chi_{(Q_{j}^{*})^{c}}(x)=:\uppercase\expandafter{\romannumeral1}_{1}+\uppercase\expandafter{\romannumeral1}_{2}.
\]
Since $T$ is a bounded operator on $L^{2}$, from the Calder\'on-Zygmund real method in \cite[Section 7.3]{Meyer1990}, we know that $T$ is bounded on $L^{q}$ for any $1<q<\infty$. Together with the size condition of $a_{j}$, we obtain that for any $(\frac{n+\eta}{n})p<q<\infty$
\[
\left ( \frac{1}{\vert Q_{j}\vert} \int_{Q_{j}}\vert T(a_{j})(x)\vert^{q}dx\right)^{\frac{1}{q}}\leq\frac{\left \| a_{j} \right \|_{L^{q}}}{\vert Q_{j}\vert^{1/q}}\leq\frac{1}{\omega(Q_{j})^{1/p}}.
\]
Since $\omega\in A_{(\frac{n+\eta}{n})p}$, then there exists $r>1$ such that $\omega\in RH_{r}$. Fix $q_{0}>\max\{\frac{n+\eta}{n}p,\frac{r}{r-1}p\}$ such that $(\frac{q_{0}}{p})^{\prime}<r$. For $\uppercase\expandafter{\romannumeral1}_{1}$, by Lemma~{\upshape\ref{le2.8}} and Remark~{\upshape\ref{re2.2}}, we can get that
\[
\begin{aligned}
        \left \| \uppercase\expandafter{\romannumeral1}_{1} \right \|_{L_{\omega}^{p}}
        &\leq\left \| \sum\limits_{j}\vert\lambda_{j}\vert\vert T(a_{j})\vert\chi_{Q_{j}^{*}} \right \|_{L_{\omega}^{p}}\\
        &C\leq\left \| \sum\limits_{j}\vert\lambda_{j}\vert\left ( \frac{1}{\vert Q_{j}\vert} \int_{Q_{j}}\vert T(a_{j})(x)\vert^{q_{0}}dx\right)^{\frac{1}{q_{0}}}\chi_{Q_{j}^{*}} \right \|_{L_{\omega}^{p}}\\
        &\leq C\left \| \sum\limits_{j}\lambda_{j}\frac{\chi_{Q_{j}^{*}}}{\omega(Q_{j})^{1/p}} \right \|_{L_{\omega}^{p}}\\
        &\leq C\left \| \sum\limits_{j}\frac{\lambda_{j}\chi_{Q_{j}}}{\omega(Q_{j})^{1/p}} \right \|_{L_{\omega}^{p}}.
\end{aligned}
\]
For $\uppercase\expandafter{\romannumeral1}_{2}$, note that $x\in(Q_{j}^{*})^{c}$ and $c_{Q_{j}}$ is the center of $Q_{j}$. We can know that $\vert x-c_{Q_{j}}\vert\geq2\vert y-c_{Q_{j}}\vert$ and $\vert y-c_{Q_{j}}\vert\leq l(Q_{j})$. Applying the smooth of condition the kernel $\mathcal{K}$, we obtain that
\[
\begin{aligned}
\vert T(a_{j})(x)\vert
&=\left\vert\int_{Q_{j}}\mathcal{K}(x,y)a_{j}(y)dy\right\vert\\
&\leq\int_{Q_{j}}\vert \mathcal{K}(x,y)-\mathcal{K}(x,c_{Q_{j}})\vert\vert a_{j}(y)\vert dy\\
&\leq C\int_{Q_{j}}\frac{\vert y-c_{Q_{j}}\vert^{\epsilon}}{\vert x-c_{Q_{j}}\vert^{n+\epsilon}}\vert a_{j}(y)\vert dy\\
&\leq C\frac{l(Q_{j})^{\epsilon}}{\vert x-c_{Q_{j}}\vert^{n+\epsilon}} \left \| a_{j} \right \|_{L^{q}}\vert Q_{j}\vert^{\frac{1}{q^{\prime}}}\\
&\leq C\frac{l(Q_{j})^{n+\epsilon}}{\omega(Q_{j})^{\frac{1}{p}}\vert x-c_{Q_{j}}\vert^{n+\epsilon}}\\
&\leq C\frac{(M(\chi_{Q_{j}})(x))^{\frac{n+\eta}{n}}}{\omega(Q_{j})^{\frac{1}{p}}}.
\end{aligned}
\]
Denote that $\gamma=\frac{n+\eta}{n}$. Applying Fefferman-Stein vector-valued maximal inequality yields that
\[
\begin{aligned}
\left \| \uppercase\expandafter{\romannumeral1}_{2} \right \|_{L_{\omega}^{p}}
&\leq C\left \| \sum\limits_{j}\frac{\vert\lambda_{j}\vert M^{\gamma}(\chi_{Q_{j}})}{\omega(Q_{j})^{\frac{1}{p}}} \right \|_{L_{\omega}^{p}}\\
&\leq C\left \| \left(\sum\limits_{j}\frac{\lambda_{j} M^{\gamma}(\chi_{Q_{j}})}{\omega(Q_{j})^{\frac{1}{p}}}\right)^{\frac{1}{\gamma}} \right \|_{L_{\omega}^{\gamma p}}^{\gamma}\\
&\leq C\left \| \sum\limits_{j}\frac{\lambda_{j}\chi_{Q_{j}}}{\omega(Q_{j})^{1/p}} \right \|_{L_{\omega}^{p}}.
\end{aligned}
\]
Combining the estimates of $\uppercase\expandafter{\romannumeral1}_{1}$ and $\uppercase\expandafter{\romannumeral1}_{2}$, we can obtain the desired result.\par
Then we can prove that
\[
\left \| \uppercase\expandafter{\romannumeral2} \right \|_{L_{\omega}^{p}}\leq C\left \|\sum\limits_{j=1}^{\infty}\frac{\mu_j \chi_{P_j}}{\omega(P_j)^{\frac{1}{p}}}\right \|_{L_{\omega}^{p}}.
\]
By repeating the similar argument, we can know that
for $x\in\mathbb R^{n}$,
\[
\uppercase\expandafter{\romannumeral2}\leq\sum\limits_{j=1}^{\infty}\vert \mu_{j}\vert\vert T(b_{j})(x)\vert\chi_{P_{j}^{*}}(x)+\sum\limits_{j=1}^{\infty}\vert\mu_{j}\vert\vert T(b_{j})(x)\vert\chi_{(P_{j}^{*})^{c}}(x)=:\uppercase\expandafter{\romannumeral1}_{1}+\uppercase\expandafter{\romannumeral1}_{2}
\]
and
\[
\left \| \uppercase\expandafter{\romannumeral1}_{1} \right \|_{L_{\omega}^{p}}\leq C\left \|\sum\limits_{j=1}^{\infty}\frac{\mu_j \chi_{P_j}}{\omega(P_j)^{\frac{1}{p}}}\right \|_{L_{\omega}^{p}}.
\]
For $\uppercase\expandafter{\romannumeral1}_{2}$, when $x\in(P_{j}^{*})^{c}$ and $y\in P_{j}$, we have $\vert x-y\vert\sim\vert x-c_{P_{j}}\vert$ and $\vert x-y\vert>1/2$. By using the size condition of $\mathcal{K}$ and the fact that $\vert P_{j}\vert>C$, we can get that for any $x\in(P_{j}^{*})^{c}$, 
\[
\begin{aligned}
\vert T(b_{j})(x)\vert
&=\left\vert\int_{P_{j}}\mathcal{K}(x,y)b_{j}(y)dy\right\vert\\
&=\int_{Q_{j}}\vert\mathcal{K}(x,y)\vert\vert b_{j}(y)\vert dy\\
&\leq C\frac{1}{\vert x-c_{P_{j}}\vert^{n+\delta}} \left \| b_{j} \right \|_{L^{q}}\vert P_{j}\vert^{\frac{1}{q^{\prime}}}\\
&\leq C\frac{l(P_{j})^{n}}{\omega(P_{j})^{1/p}\vert x-c_{P_{j}}\vert^{n+\delta}}\\
&\leq C\frac{l(P_{j})^{n+\delta}}{\omega(P_{j})^{1/p}\vert x-c_{P_{j}}\vert^{n+\delta}}\\
&\leq C\frac{(M(\chi_{P_{j}})(x))^{\frac{n+\eta}{n}}}{\omega(P_{j})^{\frac{1}{p}}}.
\end{aligned}
\]
Then, it concludes that
\[
\left \| \uppercase\expandafter{\romannumeral1}_{2} \right \|_{L_{\omega}^{p}}\leq C\left \| \sum\limits_{j}\frac{\lambda_{j}\chi_{P_{j}}}{\omega(P_{j})^{1/p}} \right \|_{L_{\omega}^{p}}.
\]\par
Therefore, by a density argument,we finish the proof of the theorem.
\end{proof}

\begin{proof}[Proof of Theorem~{\upshape\ref{th1.10}}]
By the argument similar to that used in the above proof, it will suffice to prove that for $f\in h_\omega^{p}(\mathbb R^{n})\cap L^{q}(\mathbb R^{n})$ and $(\frac{n+\eta}{n})p<q<\infty$,
\[
\left \| T(f) \right \|_{h_{\omega}^{p}}=\left \| M_{\Phi}(T(f)) \right \|_{L_{\omega}^{p}}\leq C_{1}\left \|\sum\limits_{j=1}^{\infty}\frac{\lambda_j \chi_{Q_j}}{\omega(Q_j)^{\frac{1}{p}}}\right \|_{L_{\omega}^{p}}+C_{2}\left \|\sum\limits_{j=1}^{\infty}\frac{\mu_j \chi_{P_j}}{\omega(P_j)^{\frac{1}{p}}}\right \|_{L_{\omega}^{p}}.
\]
We claim that for $x\in\mathbb R^{n}$, we have
\[
\sup\limits_{0<t<1}\vert\Phi_{t}\ast T(f)(x)\vert\leq\uppercase\expandafter{\romannumeral1}+\uppercase\expandafter{\romannumeral2}
\]
where 
\[
\begin{aligned}
\uppercase\expandafter{\romannumeral1}=\sum\limits_{j}\frac{\lambda_{j}}{\omega(Q_{j})^{1/p}}(M(T(a_{j}))(x)\chi_{2\sqrt{n}Q_{j}^{*}}(x)
+(M(\chi_{Q_{j}})(x))^{\gamma}\chi_{(2\sqrt{n}Q_{j}^{*})^{c}}(x))
\end{aligned}
\]
and
\[
\begin{aligned}
\uppercase\expandafter{\romannumeral2}=\sum\limits_{j}\frac{\mu_{j}}{\omega(P_{j})^{1/p}}(M(T(b_{j}))(x)\chi_{2\sqrt{n}P_{j}^{*}}(x)
+(M(\chi_{P_{j}})(x))^{\gamma}\chi_{(2\sqrt{n}P_{j}^{*})^{c}}(x))
\end{aligned}
\]
with $\gamma=\frac{n+\eta}{n}$. Applying the claim and repeating the nearly identical argument to the proof of Theorem~{\upshape\ref{th1.9}}, we can obtain the desired result. In fact, when $x\in 2\sqrt{n}Q_{j}^{*}$, we just need the pointwise estimate
\[
M_{\Phi}(T(\sum\limits_{j}\lambda_{j}a_{j}))(x)\leq C\sum\limits_{j}\lambda_{j}M(T(a_{j}))(x).
\]
When $x\in(2\sqrt{n}Q_{j}^{*})^{c}$, we have
\[
\begin{aligned}
    \vert\Phi_{j}\ast T(a_{j})(x)\vert
    &=\left\vert \int_{\mathbb R^{n}}\Phi_{t}(x-y)T(a_{j})(y)dy\right\vert\\
    &\leq t^{-n}\int_{B(x,t)}\vert T(a_{j})(y)\vert dy\\
    &\leq\sup\limits_{y\in B(x,t)}\vert T(a_{j})(y)\vert.
\end{aligned}
\]
Notice that $\vert Q_{j}\vert\leq C$ and $x\in(2\sqrt{n}Q_{j}^{*})^{c}$. If $0<t\leq\vert x-c_{Q_{j}}\vert/2$, we can get that $y\in(Q_{j}^{*})^{c}$. Therefore, from the proof of Theorem~{\upshape\ref{th1.9}}, we conclude that
\[
\sup\limits_{y\in B(x,t)}\vert T(a_{j})(y)\vert\leq C\frac{(M(\chi_{Q_{j}})(x))^{\gamma}}{\omega(Q_{j})^{\frac{1}{p}}}.
\]
Then we consider the case that $t>\vert x-c_{Q_{j}}\vert/2$. Observe that $a_{j}$ satisfies $\int_{\mathbb R^{n}}T(a_{j})(x)dx=0$. For any $x\in(2\sqrt{n}Q_{j}^{*})^{c}$, applying the mean value theorem and H\"older's inequality yields that
\[
\begin{aligned}
&\vert\Phi_{t}\ast T(a_{j})(x)\vert
=\left\vert\int_{\mathbb R^{n}}(\Phi_{t}(x-y)-\Phi_{t}(x-c_{Q_{j}}))T(a_{j})(x)dy\right\vert\\
&\leq t^{-n}\int_{\mathbb R^{n}}\left\vert\frac{y-c_{Q_{j}}}{t}\right\vert\vert\Phi^{\prime}((x-c_{Q_{j}}+\theta(c_{Q_{j}}-y))/t)\vert\vert T(a_{j})(y)\vert dy\\
&\leq C\vert x-c_{Q_{j}}\vert^{-n-1}\left(\int_{Q_{j}^{*}}\vert y-c_{Q_{j}}\vert\vert T(a_{j})(y)\vert dy+\int_{(Q_{j}^{*})^{c}}\vert y-c_{Q_{j}}\vert\vert T(a_{j})(y)\vert dy\right)\\
&\leq C\vert x-c_{Q_{j}}\vert^{-n-1}\left(l(Q_{j})^{\frac{n}{q^{\prime}}+1}\left \| T(a_{j}) \right \|_{L^{q}}+\int_{(Q_{j}^{*})^{c}}\frac{l(Q_{j})^{n+\eta}}{\omega(Q_{j})^{\frac{1}{p}}\vert y-c_{Q_{j}}\vert^{n+\eta-1}}dy\right)\\
&\leq C\vert x-c_{Q_{j}}\vert^{-n-1}l(Q_{j})^{n+1}\omega(Q_{j})^{-\frac{1}{p}}\leq C\frac{(M(\chi_{Q_{j}})(x))^{\gamma}}{\omega(Q_{j})^{\frac{1}{p}}}
\end{aligned}
\]
where $\theta\in(0,1)$.\par
Similarly, when $x\in2\sqrt{n}P_{j}^{*}$, we can get that
\[
M_{\Phi}(T(\sum\limits_{j}\mu_{j}b_{j})(x))\leq C\sum\limits_{j}\mu_{j}M(T(b_{j}))(x).
\]
and when $x\in(2\sqrt{n}P_{j}^{*})^{c}$, 
\[
\vert\Phi_{j}\ast T(b_{j})(x)\vert\leq\sup\limits_{y\in B(x,t)}\vert T(b_{j})(y)\vert.
\]
Notice that $\vert P_{j}\vert>C$. Then we can get that $y\in(P_{j}^{*})^{c}$. Then, repeating the same argument as used above, we can obtain that
\[
\sup\limits_{y\in B(x,t)}\vert T(b_{j})(y)\vert\leq C\frac{(M(\chi_{P_{j}})(x))^{\gamma}}{\omega(P_{j})^{\frac{1}{p}}}.
\]\par
Therefore, we complete the proof of the claim so that we can obtain the theorem.
\end{proof}

\begin{proof}[Proof of Theorem~{\upshape\ref{th1.12}}]
To prove the first part of this theorem, we apply the argument similar to that used in the proof of Theorem~{\upshape\ref{th1.9}} and Theorem~{\upshape\ref{th1.10}} and so we only need to concentrate on the differences. Now we consider the case when $1<q<\infty$. By the atomic decomposition of $h^{p}_{\omega}$ and a dense argument, in order to show that $I^{loc}_{\alpha}$ admits a bounded extension from $h^{p}_{\omega^{p}}$ to $L^{q}_{\omega^{q}}$, we only need to prove that
\begin{equation}\label{eq1}
\left \| \sum\limits_{j}\lambda_{j}I^{loc}_{\alpha}(a_{j}) \right \|_{L^{q}(\omega^{q})}\leq C_{1}\left \| \sum\limits_{j}\frac{\lambda_{j}\chi_{Q_{j}}}{\omega(Q_{j})^{\frac{1}{p}}} \right \|_{L^{p}(\omega^{p})}
\end{equation}
and
\begin{equation}\label{eq2}
\left \| \sum\limits_{j}\mu_{j}I^{loc}_{\alpha}(b_{j}) \right \|_{L^{q}(\omega^{q})}\leq C_{2}\left \| \sum\limits_{j}\frac{\mu_{j}\chi_{P_{j}}}{\omega(P_{j})^{\frac{1}{p}}} \right \|_{L^{p}(\omega^{p})}
\end{equation}
where each $a_{j}$ is $\omega$\text{-}$(p,t,s)$\text{-}atom, each $b_{j}$ is $\omega$\text{-}$(p,t,s)$\text{-}block and the exact value of $t$ will be chosen below.\par
Then we prove {\upshape\ref{eq1}}. In fact, when $\vert x-c_{Q_{j}}\vert\leq l(Q_{j}^{*})$, by applying the size condition of $a_{j}$, we obtain that
\[
\vert I^{loc}_{\alpha}(a_{j})(x)\vert\leq C\int_{Q_{j}}\frac{1}{\vert x-y\vert^{n-\alpha}}\vert a_{j}(y)\vert dy\leq C \frac{l(Q_{j})^{\alpha}}{\omega(Q_{j})^{\frac{1}{p}}}.
\]
Let $P_{N}(y)$ be the Taylor polynomial of degree $d$ of the kernel of $I^{loc}_{\alpha}$ centered at $c_{Q_{j}}$ where the exact value of $d$ will be chosen below. When $\vert x-c_{Q_{j}}\vert>l(Q_{j}^{*})$, by the moment condition of $a_{j}$ and the Taylor expansion theorem, we obtain that
\[
\begin{aligned}
    \vert I^{loc}_{\alpha}(a_{j})(x)\vert
    &\leq C\int_{Q_{j}}\left\vert\frac{1}{\vert x-y\vert^{n-\alpha}}-P_{N}(y)\right\vert\vert a_{j}(y)\vert dy\\
    &\leq C\int_{Q_{j}}\frac{1}{\vert x-c_{Q_{j}}\vert^{n+d+1-\alpha}}\vert y-c_{Q_{j}}\vert^{d+1}\vert a_{j}(y)\vert dy\\
    &\leq C\frac{l(Q_{j})^{n+d+1}}{\vert x-c_{Q_{j}}\vert^{n+d+1-\alpha}\omega(Q_{j})^{1/p}}\\
    &\leq C\frac{l(Q_{j})^{\alpha}(M\chi_{Q_{j}}(x))^{\gamma}}{\omega(Q_{j})^{1/p}},
\end{aligned}
\]
where $\gamma=\frac{n+d+1-\alpha}{n+1}$. Thus, we can conclude that for $x\in\mathbb R^{n}$,
\[
\vert I^{loc}_{\alpha}(a_{j})(x)\vert\leq C\frac{l(Q_{j})^{\alpha} (M\chi_{Q_{j}}(x))^{\gamma}}{\omega(Q_{j})^{\frac{1}{p}}}.
\]
Since $\omega^{p}\in RH_{\frac{q}{p}}$, then $\omega^{q}\in A_{\infty}$. We choose $d$ such that $\gamma q>q_{\omega^{q}}$.
Therefore, by Fefferman-Stein vector-valued maximal inequality and Lemma~{\upshape\ref{le2.9}}, we have
\[
\begin{aligned}
    \left \| \sum\limits_{j}\lambda_{j}I^{loc}_{\alpha}(a_{j}) \right \|_{L^{q}(\omega^{q})}
    &\leq C\left \| \sum\limits_{j}\lambda_{j}\frac{l(Q_{j})^{\alpha}(M\chi_{Q_{j}}(x))^{\gamma}}{\omega(Q_{j})^{1/p}} \right \|_{L^{q}(\omega^{q})}\\
    &\leq C\left \| \sum\limits_{j}\lambda_{j}\frac{l(Q_{j})^{\alpha}\chi_{Q_{j}}}{\omega(Q_{j})^{1/p}} \right \|_{L^{q}(\omega^{q})}\\
    &\leq C\left \| \sum\limits_{j}\frac{\lambda_{j}\chi_{Q_{j}}}{\omega(Q_{j})^{1/p}} \right \|_{L^{p}(\omega^{p})}.
\end{aligned}
\]\par
Now we prove {\upshape\ref{eq2}}. Notice that
\[
{\rm supp}(I^{loc}_{\alpha}(b_{j}))\subset P_{j}(c_{P_{j}},l(P_{j})+4)\subset 10P_{j}.
\]
Since $\omega^{p}\in RH_{\frac{q}{p}}$, $\omega^{q}\in A_{\infty}$, there exists $r>1$ such that $\omega^{q}\in RH_{r}$. Fix $q_{0}>(\frac{r}{r-1})q$ such that $(\frac{q_{0}}{q})^{\prime}<r$. Then $\omega^{q}\in RH_{\left(\frac{q_{0}}{q}\right)^{\prime}}$. Let $0<\alpha_{0}<\alpha<n$ satisfying $\frac{n}{\alpha_{0}}>q_{\omega^{p}}$. Define $p_{0}>q_{\omega^{p}}$ by $\frac{1}{p_{0}}-\frac{1}{q_{0}}=\frac{\alpha_{0}}{n}$. Moreover, we choose $t=p_{0}$. Thus, by Lemma~{\upshape\ref{le2.8}}, we have that
\[
\begin{aligned}
    \left \| \sum\limits_{j}\mu_{j}I^{loc}_{\alpha}(b_{j}) \right \|_{L^{q}(\omega^{q})}
    &\leq\left \| \sum\limits_{j}\mu_{j}\vert I^{loc}_{\alpha}(b_{j})\vert\chi_{10P_{j}} \right \|_{L^{q}(\omega^{q})}\\
    &\leq C\left \| \sum\limits_{j}\mu_{j}\left(\frac{1}{\vert P_{j}\vert}\int_{P_{j}}\vert I^{loc}_{\alpha}(b_{j})\vert^{q_{0}}dx\right)^{\frac{1}{q_{0}}}\chi_{10P_{j}} \right \|_{L^{q}(\omega^{q})}\\
    &\leq C\left \| \sum\limits_{j}\mu_{j}\vert P_{j}\vert^{-\frac{1}{q_{0}}}\left(\int_{P_{j}}\vert b_{j}\vert^{p_{0}}dx\right)^{\frac{1}{p_{0}}}\chi_{10P_{j}} \right \|_{L^{q}(\omega^{q})}\\
    &\leq C\left \| \sum\limits_{j}\mu_{j}\frac{l(P_{j})^{\alpha_{0}}\chi_{10P_{j}}}{\omega(P_{j})^{1/p}} \right \|_{L^{q}(\omega^{q})}\\
    &\leq C\left \| \sum\limits_{j}\mu_{j}\frac{l(P_{j})^{\alpha}\chi_{10P_{j}}}{\omega(P_{j})^{1/p}} \right \|_{L^{q}(\omega^{q})}\\
    &\leq C\left \| \sum\limits_{j}\frac{\mu_{j}\chi_{P_{j}}}{\omega(P_{j})^{1/p}} \right \|_{L^{p}(\omega^{p})}\\
\end{aligned}
\]
where the third inequality follows from the boundedness of $I^{loc}_{\alpha}$ on classical Lebesgue spaces (\cite[Lemma 8.9]{2011Weighted}) and the last inequality follows from the Lemma~{\upshape\ref{le2.9}}, Remark~{\upshape\ref{re2.2}} and the fact that $\omega^{p}\in A_{\infty}$.\par
Therefore, we have proved the first part of the theorem. Now we consider the boundedness of $I^{loc}_{\alpha}$ from $h^{p}(\omega^{p})$ to $h^{q}(\omega^{q})$. To end this, we need to prove that 
\begin{equation}\label{eq3}
\left \| \sum\limits_{j}\lambda_{j}M_{\Phi}(I^{loc}_{\alpha}(a_{j})) \right \|_{L^{q}(\omega^{q})}\leq C\left \| \sum\limits_{j}\frac{\lambda_{j}\chi_{Q_{j}}}{\omega(Q_{j})^{1/p}} \right \|_{L^{p}(\omega^{p})}
\end{equation}
and
\begin{equation}\label{eq4}
\left \| \sum\limits_{j}\mu_{j}M_{\Phi}(I^{loc}_{\alpha}(b_{j})) \right \|_{L^{q}(\omega^{q})}\leq C\left \| \sum\limits_{j}\frac{\mu_{j}\chi_{P_{j}}}{\omega(P_{j})^{1/p}} \right \|_{L^{p}(\omega^{p})},
\end{equation}
where each $a_{j}$ is $\omega$\text{-}$(p,t,N)$\text{-}atom, each $b_{j}$ is $\omega$\text{-}$(p,t,N)$\text{-}block and the exact values of $t$ and $N$ will be chosen below.\par
First we prove {\upshape\ref{eq4}}. 
For $x\in\mathbb R^{n}$,
\[
\begin{aligned}
&\left\vert\sum\limits_{j}\mu_{j}M_{\Phi}(I^{loc}_{\alpha}(b_{j}))(x)\right\vert\\
&\leq\sum\limits_{j}\mu_{j}\vert M_{\Phi}(I^{loc}_{\alpha}(b_{j}))(x)\vert\chi_{P_{j}^{*}}(x)+\sum\limits_{j}\mu_{j}\vert M_{\Phi}(I^{loc}_{\alpha}(b_{j}))(x)\vert\chi_{(P_{j}^{*})^{c}}(x)\\
&=:\uppercase\expandafter{\romannumeral1}+\uppercase\expandafter{\romannumeral2}.
\end{aligned}
\]\par
To estimate $\uppercase\expandafter{\romannumeral1}$, arguing as before we may assume that $q_{0}>\max\{(\frac{r}{r-1})q,1\}$. Let $0<\alpha_{0}<\alpha<n$ satisfying $\frac{n}{\alpha_{0}}>q_{\omega^{p}}$. Define $p_{0}>q_{\omega^{p}}$ by $\frac{1}{p_{0}}-\frac{1}{q_{0}}=\frac{\alpha_{0}}{n}$. We choose $t=p_{0}$. Then by Lemma~{\upshape\ref{le2.8}}, and since $M_{\Phi}$ is bounded on $L^{q_{0}}$,
\[
\begin{aligned}
    \left \| \uppercase\expandafter{\romannumeral1} \right \|_{L^{q}(\omega^{q})}
    &\leq C\left \| \sum\limits_{j}\lambda_{j}\left(\frac{1}{P_{j}}\int_{P_{j}}\left(M_{\Phi}I^{loc}_{\alpha}(b_{j})\right)^{q_{0}}dx\right)^{\frac{1}{q_{0}}}\chi_{P_{j}^{*}} \right \|_{L^{q}(\omega^{q})}\\
    &\leq C\left \| \sum\limits_{j}\lambda_{j}\vert P_{j}\vert^{-\frac{1}{q_{0}}}\left(\int_{P_{j}}\vert I^{loc}_{\alpha}(b_{j})\vert^{q_{0}}dx\right)^{\frac{1}{q_{0}}}\chi_{P_{j}^{*}} \right \|_{L^{q}(\omega^{q})}\\
    &\leq C\left \| \sum\limits_{j}\lambda_{j}\vert P_{j}\vert^{-\frac{1}{q_{0}}}\left(\int_{P_{j}}\vert b_{j}\vert^{p_{0}}dx\right)^{\frac{1}{p_{0}}}\chi_{P_{j}^{*}} \right \|_{L^{q}(\omega^{q})}\\
    &\leq C\left \| \sum\limits_{j}\lambda_{j}\frac{\vert P_{j}\vert^{\frac{\alpha}{n}}}{\omega(P_{j})^{\frac{1}{p}}}\chi_{P_{j}^{*}} \right \|_{L^{q}(\omega^{q})}\\
    &\leq C\left \| \sum\limits_{j}\frac{\lambda_{j}\chi_{P_{j}^{*}}}{\omega(P_{j})^{\frac{1}{p}}} \right \|_{L^{p}(\omega^{p})}\\
    &\leq C\left \| \sum\limits_{j}\frac{\lambda_{j}\chi_{P_{j}}}{\omega(P_{j})^{\frac{1}{p}}} \right \|_{L^{p}(\omega^{p})}\\
\end{aligned}
\]
where the third inequality follows from the boundedness of $I^{loc}_{\alpha}$ on classical Lebesgue spaces (\cite[Lemma 8.9]{2011Weighted}), the penultimate inequality follows from Lemma\ {\upshape\ref{le2.9}} and the last inequality follows from Remark~{\upshape\ref{re2.2}}.\par
To estimate $\uppercase\expandafter{\romannumeral2}$, we choose $N$ so that 
\[
\left(\frac{n-\alpha+N+1}{n}\right)q>q_{\omega^{q}}.
\]
Let $\tau=\frac{n+N+1}{n}$. Then, since $\frac{1}{\tau p}-\frac{1}{\tau q}=\frac{\alpha}{\tau n}$, we have that
\[
1+\frac{\tau q}{(\tau p)^{'}}=\tau p\left(1-\frac{\alpha}{\tau n}\right)=\left(\frac{n-\alpha+N+1}{n}\right)q.
\]
Let $\upsilon=\omega^{\frac{1}{\tau}}$. Then we have that $\upsilon^{\tau q}=\omega^{q}\in A_{1+\frac{\tau q}{(\tau p)^{'}}}$. Equivalently, we have that $\upsilon\in A_{\tau p,\tau q}$. Therefore, by Lemma~{\upshape\ref{le2.10}} and Lemma~{\upshape\ref{le2.11}} applied to the fractional maximal operator $M_{\alpha_{\tau}}$,
\[
\begin{aligned}
    \left \| \uppercase\expandafter{\romannumeral2} \right \|_{L^{q}(\omega^{q})}
    &\leq C\left \| \left(\sum\limits_{j}\lambda_{j}(M_{\alpha_{\tau}}(\chi_{P_{j}}))^{\tau}\right)^{\frac{1}{\tau}} \right \|_{L^{q\tau}(\upsilon^{q\tau})}^{\tau}\\
    &\leq C\left \| \left(\sum\limits_{j}\lambda_{j}\frac{\chi_{P_{j}}}{\omega(P_{j})^{\frac{1}{p}}}\right)^{\frac{1}{\tau}} \right \|_{L^{p\tau}(\upsilon^{p\tau})}^{\tau}\\
    &=C\left \| \sum\limits_{j}\frac{\lambda_{j}\chi_{P_{j}}}{\omega(P_{j})^{\frac{1}{p}}} \right \|_{L^{p}(\omega^{p})}\\
\end{aligned}
\]\par
Then we prove {\upshape\ref{eq3}}. From the definition of $M_{\Phi}(I_{\alpha}^{loc}(a_{j}))$, we can obtain that 
\[
{\rm supp}(M_{\Phi}(I_{\alpha}^{loc}(a_{j})))\subset Q_{j}(c_{Q_{j}},l(Q_{j})+8)\subset 20Q_{j}.
\]
Applying the argument similar to the above proof, we can obtain the desired results.

\end{proof}


\section*{Acknowledgments}This project is supported by the National Natural Science Foundation of China (Grant No. 11901309), Natural Science Foundation of Jiangsu Province of China (Grant No. BK20180734) and Natural Science Foundation of Nanjing University of Posts and Telecommunications (Grant No. NY222168).


\bibliographystyle{amsplain}

\enddocument

\end{document}